\theoremstyle{plain}
\newtheorem{theorem}{Theorem}[section]
\newtheorem{proposition}[theorem]{Proposition}
\theoremstyle{definition}
\newtheorem{definition}[theorem]{Definition}
\theoremstyle{remark}
\newtheorem{remark}[theorem]{Remark}
\numberwithin{equation}{section} %% Equation numbering control.
\numberwithin{figure}{section}   %% Figure numbering control.
\newcommand{\field}[1]{\mathbb{#1}}
\newcommand{\nZ}{\field{Z}}
\newcommand{\nR}{\field{R}}
\newcommand{\nT}{\mathbb T}
\newcommand{\vphi}{\varphi}
\newcommand{\tac}{\textasteriskcentered}
\newcommand{\pnt}[1]{\left(#1\right)}
\DeclareMathOperator*{\esssup}{ess\,sup}
\newcounter{my_counter}
\title[Regularity criteria for KSE]
{Regularity criteria for the Kuramoto-Sivashinsky equation in dimensions two and three}
\date{}
\author{Adam Larios*}
\address[Adam Larios*]{Department of Mathematics, 
                University of Nebraska--Lincoln,
        Lincoln, NE 68588-0130, USA}
\email[Adam Larios*]{alarios@unl.edu}
\author{Mohammad Mahabubur Rahman}
\address[Mohammad Mahabubur Rahman]{Department of Mathematics and Statistics, 
                Texas Tech University, 
       Lubbock, TX, 79409, USA}
\email[Mohammad Mahabubur Rahman]{Mohammad-Mahabu.Rahman@ttu.edu}
\author[Kazuo Yamazaki]{Kazuo Yamazaki}
\address[Kazuo Yamazaki]{Department of Mathematics and Statistics, 
                Texas Tech University, 
       Lubbock, TX, 79409, USA}
\email[Kazuo Yamazaki]{kyamazak@ttu.edu}
\keywords{(Kuramoto-Sivashinsky, Navier-Stokes Equations, Regularity, Global Well-Posedness.)}
\thanks{MSC 2010 Classification: 35A01, 35K25, 35K51, 35K58, 35B65, 35B10, 65M70.\\\tac Corresponding author}
\begin{document}
%==============================================================-
\begin{abstract}
We propose and prove several regularity  criteria for the 2D and 3D Kuramoto-Sivashinsky equation, in both its scalar and vector forms.  In particular, we examine integrability criteria for the regularity of solutions in terms of the scalar solution $\phi$, the vector solution $u\triangleq\nabla\phi$, as well as the divergence $\text{div}(u)=\Delta\phi$, and each component of $u$ and $\nabla u$.  We also investigate these criteria computationally in the 2D case, and we include snapshots of solutions for several quantities of interest that arise in energy estimates.
\end{abstract}

\maketitle
\thispagestyle{empty}%Gets rid of page number on first page.
%============================================================

\noindent
% =====================================================================
\section{Introduction}\label{secInt}
% In this paper, we prove several regularity criteria for the 2D and 3D Kuramoto-Sivashinsky equation (KSE).  We also examine the associated quantities computationally.  
\noindent
For those with an interest in nonlinear PDEs, the two-dimensional (2D) Kuramoto-Sivashinsky equation (KSE) is full of tantalizing possibilities.
% ; throughout this manuscript we denote $N$D to stand for $N$-dimensional. <-- I think this is clear, and does not need to be pointed out.  Also, it throws off the rhythm of the introduction.
There is such strong diffusion in the equation that the nonlinear term seems incapable of overpowering it, and yet, it is precisely the structure of the nonlinear term that has so far thwarted all efforts at a general proof of global well-posedness.  It is sometimes thought that this is due to the backward diffusion term in the equations, but this term serves mainly as the source of energy and instability, fueling a kind of self-perpetuating chaos.  Instead, the major difficulty is due to the fact that the nonlinear term does not vanish in any known energy estimates, similar to the vorticity stretching term in the Navier-Stokes equations (NSE), where an analogous barrier arises.  This paper aims to quantify these difficulties by proving several integrability-type component based regularity criteria for well-posedness, and also to shed light on well-posedness issues via numerical simulations.  
 
 For those with a more practical eye, the KSE enjoys a wealth of applications.  Originally proposed in the late 1970's by Kuramoto, Sivashinsky, and Tsuzuki in investigations of crystal growth \cite{Kuramoto_Tsuzuki_1975,Kuramoto_Tsuzuki_1976} and flame-front instabilities \cite{Sivashinsky_1977} (see also \cite{Sivashinsky_1980_stoichiometry}), it has since made many appearances, such as in studies of inclined planes \cite{sivashinsky1980vertical}, and has even been shown to be a general feature of certain unstable behaviors \cite{misbah1994secondary}.  Although we consider both 2D and 3D cases, we note that, at least in certain situations such as flame-front propagation, the 2D case is more relevant, as it describes the evolving 2D front of a 3D flame.  However, the 3D case is of interest in terms of making analogies with the 3D NSE.
 
 For the $N$D case, short-time existence of smooth solutions (specifically, Gevrey class regularity) was proved in \cite{Biswas_Swanson_2007_KSE_Rn}. The triviality of steady states was studied in \cite{Cao_Titi_2006_KSE}.  Variations of the 2D KSE have been studied in, e.g., \cite{Ambrose_Mazzucato_2021,Boling_Fengqiu_1993_JPDE,CotiZelati_Dolce_Feng_Mazzucato_2021,Feng_Mazzucato_2020,Ioakim_Smyrlis_2016,Larios_Yamazaki_2020_rKSE,Tomlin_Kalogirou_Papageorgiou_2018}, as well as variations on its boundary conditions \cite{Galaktionov_Mitidieri_Pokhozhaev_2008,Larios_Titi_2015_BC_Blowup,Pokhozhaev_2008}.   The question of the global well-posedness of KSE for $N\geq 2$ in the periodic case or $\nR^N$ is still open in general; however, in dimensions $N=2$ and $3$ for the case of radially symmetric initial data in an annular domain, global well-posedness was proved in \cite{Bellout_Benachour_Titi_2003}, assuming homogeneous Neumann boundary conditions.  
It was shown in \cite{Ambrose_Mazzucato_2018} (see also \cite{Ambrose_Mazzucato_2021,CotiZelati_Dolce_Feng_Mazzucato_2021,Feng_Mazzucato_2020}) that, in the case where there are no linearly growing modes, global existence holds for sufficiently small initial data in a certain function space based on the Wiener algebra. We also mention \cite{Sell_Taboada_1992}, which studied global existence and attractors in 2D thin domains (see also \cite{Boling_Fengqiu_1993_JPDE,Benachour_Kukavica_Rusin_Ziane_2014_JDDE_2DKSE}).  

The two major difficulties in the $N\geq2$ case are (i) the fact that the nonlinear term does not vanish in energy estimates (since the solution is not divergence-free), and hence no $L^p$ norm is conserved, and (ii) the fact that, due to the 4th-order derivatives, no maximum principle is known to hold.  However, we note an interesting recent result in \cite{Ibdah_2021_MichelsonSivashinsky} that proves global regularity (without any smallness condition) for a modified version of the Michelson-Sivashinsky equation.  While lower-order than KSE, this equation shares many similarities with KSE in that it also does not conserve $L^p$ norms, has no known maximum principle, and has no divergence-free condition.

In contrast to the 2D KSE, the 1D KSE is a seemingly limitless playground, where one has essentially everything one could want.  Well-posedness is straight-forward \cite{Nicolaenko_Scheurer_1984,Tadmor_1986}, the large-time dynamics are chaotic  (unlike in the case of the 1D Burgers' equation, where the large-time dynamics are trivial) but finite-dimensional \cite{Collet_Eckmann_Epstein_Stubbe_1993_Attractor,Nicolaenko_Scheurer_Temam_1985,Otto_2009}, and much work on quantities of interest, such as the existence of the attractor and estimates on its dimension, have seen excellent progress in recent decades, see, e.g. \cite{Collet_Eckmann_Epstein_Stubbe_1993_Attractor,Collet_Eckmann_Epstein_Stubbe_1993_Analyticity,Constantin_Foias_Nicolaenko_Temam_1989,Constantin_Foias_Nicolaenko_Temam_1989_IM_Book,Foias_Nicolaenko_Sell_Temam_1985,Foias_Sell_Temam_1985,Foias_Sell_Titi_1989,Goluskin_Fantuzzi_2019,Goodman_1994,Grujic_2000_KSE,Hyman_Nicolaenko_1986,Ilyashenko_1992,Nicolaenko_Scheurer_Temam_1986,Robinson_2001,Tadmor_1986,Temam_1997_IDDS,Kostianko_Titi_Zelik_2018,Wittenberg_2014_DCDSA} and the references therein.
 
Faced with an obstacle in proving global well-posedness in case $N = 2$ or 3, a natural strategy is to investigate its criterion following analogous works for the 3D NSE such as \eqref{estimate 10} from  \cite{Escauriaza_Seregin_Sverak_2003, Kiselev_Ladyzhenskaya_1957, Prodi_1959, Serrin_1962}. In this endeavor, we first obtained criterion that seemed to be natural extensions from those of the NSE (see \eqref{1.12} and Remark \ref{Remark 1.4}). To our surprise, subsequently we were able to improve such results significantly  (see Theorem \ref{Theorem 2.1} and Remark \ref{Remark 3.2}). This motivated us to pursue another direction of research that has caught much attention in the past few decades, specifically component reduction of such classical regularity criterion (see \eqref{estimate 11}). Despite a large amount of work dedicated to such results on various systems including the NSE, magnetohydrodynamics (MHD) system and surface quasi-geostrophic (SQG) equations, divergence-free property of velocity field was crucial in all of their results; consequently, we are not aware of any example of a PDE that does not involve divergence-free velocity field and yet admit component reduction results. Unexpectedly, we were able to obtain such results by making use of special structure of the KSE, which seems to be a unique property that is absent in the NSE or Burgers' equation (see Remark \ref{Remark 3.5}). 

Now let us write $\partial_{t} \triangleq \frac{\partial}{\partial t}$ and introduce several equations of our main concern. First, the $N$D KSE in vector form is given by
% \begin{subequations}
\begin{align}%[left=\empheqlbrace]{align}
% \label{KSE_mo}
\label{KSE}
\partial_{t} u+(u\cdot\nabla)u + \lambda \Delta u + \Delta^{2}u = 0 ,
% \\
% \bu(\bx,0)&=\bu_0(\bx).
\end{align}
% \end{subequations}
with appropriate boundary conditions (here taken to be periodic) and initial data $u^{in}$.  Here, $\lambda>0$ is a constant. % with dimensions $(\text{length})^{-2}$.  
We note that one often considers the KSE with $\lambda=1$ and a domain with size determined by a parameter $L>0$, such as $[-\tfrac{L}{2},\tfrac{L}{2}]^N$. We choose to work with a fixed domain $\mathbb{T}^N\triangleq[-\pi,\pi]^N$ and a general parameter $\lambda$; these formulations are equivalent under the rescaling 
$u'(x,t)
\mapsto 
(L/2\pi)^{-3}u(\tfrac{x}{L/2\pi},\tfrac{t}{(L/2\pi)^4})$ after setting $\lambda=(L/2\pi)^2$.  

The scalar form of \eqref{KSE} is formally given by
% \begin{subequations}
\begin{align}%[left=\empheqlbrace]{align}
\label{KSE_scalar}
% \label{KSE_scalar_mo}
\partial_{t} \phi +\tfrac{1}{2}|\nabla\phi|^2 + \lambda \Delta\phi + \Delta^2\phi &= 0,
% \\
% \bu(\bx,0)&=\bu_0(\bx).
\end{align}
% \end{subequations}
By setting $u=\nabla\phi$ in \eqref{KSE_scalar}, one formally obtains \eqref{KSE}.  Our analytical results throughout this manuscript go through independently of the fact that $u = \nabla \phi$; i.e., we will obtain results on \eqref{KSE} and \eqref{KSE_scalar} separately and independently of each other. For comparison, let us recall the NSE:
\begin{equation}\label{NSE}
\partial_{t} u + (u\cdot\nabla) u + \nabla \Pi = \Delta u, \hspace{3mm} \nabla\cdot u = 0, 
\end{equation} 
where $\Pi$ represents the pressure. Due to the condition $\nabla\cdot u = 0$, an $L^{2}(\mathbb{T}^{N})$-inner product of $(u\cdot\nabla) u$ with $u$ vanishes for the NSE, which is crucial in energy estimates.  However, this does not happen for the KSE, nor for the $N$D Burgers' equation
\begin{equation}\label{Burgers}
\partial_{t} u + (u\cdot\nabla) u = \Delta u
\end{equation}
with $N \geq 2$, although Burgers' equation enjoys a maximum principle allowing for a proof of global well-posedness; see, e.g., \cite{Ladyzhenskaya_1968,Pooley_Robinson_2016} for details.  Currently, no such maximum principle is known to hold for the KSE, and hence, global well-posedness for arbitrary smooth initial data remains an open problem.
The case $\Pi \equiv 0$ and $\nabla\cdot u = 0$ condition being dropped reduces the NSE to Burgers' equation.

\begin{remark}\label{u_neq_nabla_phi}
A major interest of ours is the Navier-Stokes equations \eqref{NSE}, where, not counting the pressure, the nonlinearity is (at least notationally) the same as in equation \eqref{KSE}. Hence, our we aim to prove as much as possible {\it without} using the assumption that $u=\nabla\phi$.  However, all of our results can be translated to results about equation \eqref{KSE_scalar} by thinking of $u$ as merely a notation for $\nabla\phi$; for instance, $u_1 = \partial_{1}\phi$, $\nabla\cdot u = \Delta u$, and so on.  

Note that if one does \textit{not} assume that $u=\nabla\phi$, then the vector and scalar forms of the KSE might not be equivalent.  For instance, if $\phi$ is a given \textit{smooth} solution to \eqref{KSE_scalar} with initial data $\phi^{in}$, then $u\triangleq\nabla\phi$ automatically solves \eqref{KSE} with initial data $u^{in}\triangleq\nabla\phi^{in}$.  However, given a solution $u$ to the vector form \eqref{KSE}, it may not be possible to find a solution $\phi$ to the scalar form such that $u=\nabla\phi$, since $u$ might not be a pure gradient; for instance, when the initial data $u_0$ of \eqref{KSE} is not a pure gradient.
 \end{remark} % wrapfig environment conflicts with remark environment, so end remark early.
 
 \vspace{-2mm}
 
%  Indeed, denoting the Leray-Helmholtz projection onto divergence-free spaces by $P_\sigma$ 
%   (see, e.g., \cite{Constantin_Foias_1988,Temam_2001_Th_Num} for details), 
%  it would follow that $P_\sigma u(t)=0$ for all time. 

%  For instance, even if one assumes that $u^{in}$ is mean-zero
% %  $\int_{\mathbb{T}^N} u^{in}\,dx=0$ 
%  (which is automatically true if $u^{in}$ is a gradient), it is not necessarily case that solution remains mean-free; unlike in, e.g., the unforced incompressible NSE.  Indeed, in computational simulations of the vector equation with initially mean-zero solutions, the average grows rapidly.  Of course, if a solution $u$ is not mean-zero, it cannot be a gradient.  
 %
%  It is worth considering the deviation from the gradient manifold; for example, once could consider the growth in time of $\inf_{q\in H^1}\|u-\nabla q\|_{L^2}=\|P_\sigma u\|_{L^2}$, where $P_\sigma$ is the Leray-Helmholtz projection onto divergence-free spaces.  
%
% ==========================================
\begin{wrapfigure}[13]{r}{0.5\textwidth}
\vspace{-4mm}
%\begin{figure}[!ht]
\centering
\captionsetup{width=0.5\textwidth}
\includegraphics[width=0.5\textwidth,trim=4mm 3mm 19mm 9mm, clip]
{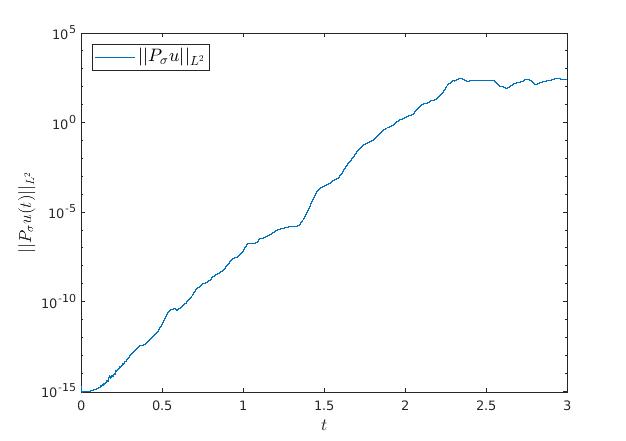}
% \begin{minipage}[t]{0.65\textwidth}
% \captionsetup{margin=1.2cm}
\vspace{-6mm}
\caption{\label{fig_Pu} \scriptsize \textit{(Log-linear plot)} The growth of the $L^2$-norm of the divergence-free part $v=P_\sigma u$ of a vector solution $u$ of \eqref{KSE} ($\lambda=8.1$; i.e.,  24 unstable modes), with initial data $u^{in}=\nabla\phi^{in}$, where $\phi^{in}$ is given by \eqref{KKP}.  Since $P_\sigma u^{in}\equiv0$, one  expects that $P_\sigma u(t)\approx0$ for $t>0$, but this is not what is observed computationally. }
% \end{minipage}
\end{wrapfigure}
%\end{figure}
% ==========================================
\noindent
 Indeed, denoting the Helmholtz-Hodge decomposition of a solution $u$ to \eqref{KSE} by $u=\nabla q + v$ where $\nabla\cdot v=0$, a straight-forward calculation shows, at least formally, that
 \begin{minipage}[t]{0.45\textwidth}
  \vspace{-2mm}
 \begin{align}
 &
\frac12\frac{d}{dt}\|v\|_{L^2}^2
+\|\Delta v\|_{L^2}^2
\\=&\notag
\lambda\|\nabla v\|_{L^2}^2
-2(\nabla q\cdot S,v),
\end{align}
 \vspace{-0.75mm}
\end{minipage}

\noindent
where $S\triangleq\tfrac12(\nabla v - (\nabla v)^T)$ is the anti-symmetric part of the gradient of $v$.  If $(\nabla q\cdot S,v)\not\equiv0$, there is potential for the gradient part of $u$ to ``push'' the solution off of the gradient manifold.  Moreover, even if one could show analytically\footnote{We denote the Leray-Helmholtz projection $P_\sigma u = v$.  See, e.g., \cite{Constantin_Foias_1988,Temam_2001_Th_Num} for details.} that $P_\sigma u = 0$ is preserved by \eqref{KSE}, in computations, small errors could be amplified due to the destabilizing term in \eqref{u_neq_nabla_phi}, and the non-gradient part of the solution could grow rapidly.  To test this computationally, we simulated the vector equation \eqref{KSE} starting from a pure-gradient initial condition.  The results are shown in Figure \ref{fig_Pu}, where we observe that the deviation from the gradient manifold grows exponentially fast in time (see Figure \ref{fig_Pu}).

% \fbox{\begin{minipage}{\textwidth}
% {\color{red}TO BE DELETED BEFORE SUBMISSION}\\
% Consider the Leray-Helmholtz decomposition of $u$; that is, $u=v+\nabla q$ where $\nabla\cdot v=0$.  Then, from \eqref{KSE}
% \begin{align*}
% \partial_{t}v
% &
% +\partial_{t}\nabla q
% +(v\cdot\nabla)v
% +(v\cdot\nabla)\nabla q
% +(\nabla q\cdot\nabla)v
% +(\nabla q\cdot\nabla)\nabla q
% \\&
% +\lambda \Delta v+\lambda \nabla q
% +\Delta^{2}v+\Delta^{2}\nabla q
% =0.
% \end{align*}
% Take the inner-product with $v$ and use $(\nabla q,v)=0$ to obtain
% \begin{align*}
% &\quad
% \frac12\frac{d}{dt}\|v\|_{L^2}^2
% +\|\Delta v\|_{L^2}^2
% \\&=
% \lambda\|\nabla v\|_{L^2}^2
% -((v\cdot\nabla)\nabla q,v)
% -((\nabla q\cdot\nabla)v,v)
% -((\nabla q\cdot\nabla)\nabla q,v)
% \end{align*}
% Also, 
% $((\nabla q\cdot\nabla)\nabla q,v)
% =\int \partial_i q(\partial_i\partial_j q) v_j
% =\int \frac12\partial_j(\partial_i q\partial_i q) v_j
% =\frac12(\nabla|\nabla q|^2,v)=0$.  
% Moreover,
% \begin{align*}
%  -((v\cdot\nabla)\nabla q,v)
% &=-\int v_i(\partial_i\partial_j q) v_j
%  =\cancel{\int v_i\partial_iq \partial_jv_j}+\int \partial_j v_i\partial_i q v_j
%  =\int  \partial_iq (\partial_jv_i) v_j 
% \\&=
% (\nabla q\cdot(\nabla v)^T,v).
% \end{align*}
% So,
% \begin{align*}
% \frac12\frac{d}{dt}\|v\|_{L^2}^2
% +\|\Delta v\|_{L^2}^2
% &=
% \lambda\|\nabla v\|_{L^2}^2
% -(\nabla q\cdot (\nabla v - (\nabla v)^T),v)
% \\&=
% \lambda\|\nabla v\|_{L^2}^2
% -2(\nabla q\cdot S,v)
% \end{align*}
% where $S:=\tfrac12(\nabla v - (\nabla v)^T)$ is the anti-symmetric part of the gradient.  
% \end{minipage}}

Therefore, we choose to study both \eqref{KSE} and \eqref{KSE_scalar} independently, although if one assumes that $u = \nabla \phi$, then all the results we obtain for \eqref{KSE}, specifically Theorems \ref{Theorem 2.1}, \ref{Theorem 2.7}, \ref{new_theorem}, \ref{Theorem 2.9}, \ref{Theorem 3.10}, and \ref{Theorem 2.13}, immediately imply equivalent results for $\phi$ that solves \eqref{KSE_scalar}. Informally, we give a comprehensive list our results of regularity criteria here for clarity:
\begin{enumerate}
\item in terms of $u$ in 2D and 3D (Theorem \ref{Theorem 2.1});
\item in terms of $u_{1}$ in 2D (Theorem \ref{Theorem 2.7});
\item in terms of $u_{1}, u_{2}$ in 3D (Theorem \ref{new_theorem}); 
\item in terms of $\nabla u$ in 2D and 3D (Theorem \ref{Theorem 2.9}); 
\item in terms of $\nabla\cdot u$ in 2D and 3D (Theorem \ref{Theorem 3.10}); 
\item in terms of $\partial_{2} u_{2}$ in 2D (Theorem \ref{Theorem 2.13}); \item in terms of $\phi$ in 2D and 3D (Theorem \ref{Theorem 3.17});
\item in terms of $\partial_{12} \phi$ in 2D (Theorem \ref{Theorem 3.14}).
\end{enumerate}
\begin{remark}
% Note that the results of Theorems \ref{Theorem 2.1}, \ref{Theorem 2.9}, and  \ref{Theorem 3.17} are somewhat expected, and use similar techniques to the case of the Navier-Stokes equations, but we include them for completeness.  However, the results on individual components are much less expected, 
We note that the proofs of the results on individual components  are non-standard, and do not follow as in, e.g., the Navier-Stokes case; a major difference being the loss of the divergence-free condition, which is what makes the multi-dimensional KSE difficult in the first place.
\end{remark}

This paper is organized as follows.  In Section \ref{sec_pre}, we lay out notation and mention some preliminary results.  In Section \ref{sec_reg}, we state and prove our main results. In Section \ref{sec_comp}, we present and discuss our computational results. 

\section{Preliminaries}\label{sec_pre}

\noindent
For simplicity we write $\partial_{j} \triangleq \frac{\partial}{\partial x_{j}}$ for $j \in \{1,\hdots, N\}$ and $\int F \triangleq \int_{\mathbb{T}^{N}} F(x) dx$; for further brevity we may write $f_{x} \triangleq \partial_{x} f$, etc., when $f$ is a scalar field. We write $A \lesssim_{a,b} B$ when there exists a constant $C = C(a,b) \geq 0$ such that $A \leq C B$.  We recall that we may write 
\begin{equation*} 
f(x) = \sum_{k \in \mathbb{Z}^{N}} \hat{f}(k) e^{ik\cdot x}, 
\end{equation*} 
and the inhomogeneous and homogeneous Sobolev norms 
\begin{equation*}
\lVert f \rVert_{H^{s}} \triangleq \Big(\sum_{k \in \mathbb{Z}^{N}}  (1+ \lvert k \rvert^{2s} ) \lvert \hat{f} (k)\rvert^{2} \Big)^{\frac{1}{2}}, \hspace{3mm} \lVert f \rVert_{\dot{H}^{s}} \triangleq \Big(\sum_{k \in \mathbb{Z}^{N}} \lvert k \rvert^{2s} \lvert \hat{f}(k) \rvert^{2}\Big)^{\frac{1}{2}},
\end{equation*} 
respectively. We denote by $(\cdot,\cdot)$ the standard $L^2(\mathbb{T}^{N})$ inner-product, and by $\left<\cdot,\cdot\right>$ the $H^{-1}$-$H^1$ duality paring.

Let us formally write down the definition of a strong solution to the KSE equation \eqref{KSE}.
\begin{definition}
We call $u$ a strong solution to the KSE \eqref{KSE} over a time interval $[0,T]$ if for any  $\psi \in C^{\infty}(\mathbb{T}^{N})$, 
\begin{equation}
\left<\partial_{t}u, \psi\right> + (u\cdot\nabla u, \psi) + (\Delta u, \Delta \psi) = \lambda (\nabla u, \nabla \psi) 
\end{equation} 
for almost all $t \in [0,T]$, and 
\begin{subequations}
\begin{align}
& u \in L^{\infty} ([0, T]; H^{1}(\mathbb{T}^{N})), u \in L^{2}([0, T]; H^{3}(\mathbb{T}^{N})),\\
& u \in C([0,T]; H^{s}(\mathbb{T}^{N})) \hspace{1mm} \forall \hspace{1mm} s \in [0,1), \partial_{t} u \in L^{2}([0, T]; H^{-1}(\mathbb{T}^{N})). 
\end{align}
\end{subequations} 
and moreover, the initial data $u_0\in H^1(\mathbb{T}^{N}))$ is satisfied in the sense that $(u(t),\psi)\rightarrow(u_0,\psi)$ as $t\downarrow0$.
\end{definition} 
Standard arguments show (as in, e.g., \cite{Larios_Yamazaki_2020_rKSE}) that the KSE is locally well-posed in $H^{1}(\mathbb{T}^{N})$ for both $N \in\{ 2, 3\}$: 
\begin{theorem}\label{Theorem 2.2}
In case $N \in \{2,3\}$, given any initial data $u^{in} \in H^{1}(\mathbb{T}^{N})$, there exists an interval $[0, T)$ where $T = T(u^{in}) > 0$ over which the KSE \eqref{KSE} has a unique strong solution starting from $u^{in}$. 
\end{theorem}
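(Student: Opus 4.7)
The plan is a standard Faedo--Galerkin construction coupled with a short-time $H^1$ a priori estimate. Let $P_{n}$ denote the orthogonal projection onto the Fourier modes $\{e^{ik\cdot x} : |k|\le n\}$ and consider the approximate system
\begin{equation*}
\partial_{t}u_{n} + P_{n}\big((u_{n}\cdot\nabla)u_{n}\big) + \lambda \Delta u_{n} + \Delta^{2} u_{n} = 0,
\qquad u_{n}(0)=P_{n} u^{in},
\end{equation*}
which is a locally Lipschitz ODE on the finite-dimensional space $P_{n}L^{2}$, so Picard--Lindel\"of gives existence on a maximal interval $[0,T_{n})$. The first task is to produce a time $T=T(\|u^{in}\|_{H^{1}})>0$, independent of $n$, on which $\|u_{n}(t)\|_{H^{1}}$ stays bounded.

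For that uniform bound, I would test with $u_{n}-\Delta u_{n}$. The linear diffusion contributes $\|\Delta u_{n}\|_{L^{2}}^{2}+\|\nabla \Delta u_{n}\|_{L^{2}}^{2}$ on the good side, while the backward-diffusion term $\lambda\|\nabla u_{n}\|_{L^{2}}^{2}+\lambda\|\Delta u_{n}\|_{L^{2}}^{2}$ is easily absorbed by interpolation (e.g.\ $\|\Delta u_{n}\|_{L^{2}}^{2}\le \epsilon\|\nabla\Delta u_{n}\|_{L^{2}}^{2}+C_{\epsilon}\|u_{n}\|_{L^{2}}^{2}$). The delicate piece is the nonlinearity. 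Integration by parts yields
\begin{equation*}
\big|((u_{n}\cdot\nabla)u_{n},\, u_{n}-\Delta u_{n})\big|
\lesssim \|u_{n}\|_{L^{\infty}}\|\nabla u_{n}\|_{L^{2}}^{2}
+ \|u_{n}\|_{L^{6}}\|\nabla u_{n}\|_{L^{3}}\|\Delta u_{n}\|_{L^{2}},
\end{equation*}
and then I would use $\|u_{n}\|_{L^{6}}\lesssim \|u_{n}\|_{H^{1}}$, the Gagliardo--Nirenberg inequality $\|\nabla u_{n}\|_{L^{3}}\lesssim \|\nabla u_{n}\|_{L^{2}}^{1-\theta}\|\nabla\Delta u_{n}\|_{L^{2}}^{\theta}$ with the appropriate $\theta\in(0,1)$ in dimension $N\in\{2,3\}$, and analogous control of $\|u_{n}\|_{L^{\infty}}$, followed by Young's inequality to hide a small multiple of $\|\nabla\Delta u_{n}\|_{L^{2}}^{2}$ on the left. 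This produces an inequality of the form
\begin{equation*}
\frac{d}{dt}\|u_{n}\|_{H^{1}}^{2} + \tfrac12\|u_{n}\|_{H^{3}}^{2}
\le C\big(1+\|u_{n}\|_{H^{1}}^{2}\big)^{p}
\end{equation*}
for some $p>1$, from which a Riccati-type comparison yields a uniform bound on $[0,T]$ with $T$ depending only on $\|u^{in}\|_{H^{1}}$. This nonlinear estimate, especially in the 3D case where Sobolev embeddings are tight, is the main technical obstacle; the absence of the divergence-free condition is precisely why the usual NSE cancellation is unavailable.

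With these uniform bounds in hand, passing to the limit is routine: the equation then gives $\partial_{t}u_{n}$ bounded in $L^{2}([0,T];H^{-1})$, so Banach--Alaoglu produces weak-$\ast$ limits in $L^{\infty}([0,T];H^{1})$ and weak limits in $L^{2}([0,T];H^{3})$, while Aubin--Lions yields strong convergence in $L^{2}([0,T];H^{s})$ for every $s<3$. Strong convergence is sufficient to pass to the limit in the nonlinear term $(u_{n}\cdot\nabla)u_{n}$ against smooth test functions, giving a strong solution in the stated sense. Continuity $u\in C([0,T];H^{s})$ for $s<1$ comes from $\partial_{t}u\in L^{2}_{t}H^{-1}_{x}$ and $u\in L^{2}_{t}H^{3}_{x}$ via a Lions--Magenes type lemma, and attainment of the initial datum follows.

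Finally, for uniqueness, let $u,v$ be two strong solutions with the same initial datum and set $w=u-v$. Subtracting the equations and testing with $w$ gives
\begin{equation*}
\frac12\frac{d}{dt}\|w\|_{L^{2}}^{2} + \|\Delta w\|_{L^{2}}^{2}
= \lambda\|\nabla w\|_{L^{2}}^{2} - ((w\cdot\nabla)u,w) - ((v\cdot\nabla)w,w),
\end{equation*}
and the right-hand side is bounded, via Sobolev embedding and Young's inequality, by $C(\|u\|_{H^{1}}^{q}+\|v\|_{H^{1}}^{q}+1)\|w\|_{L^{2}}^{2}$ after absorbing a small multiple of $\|\Delta w\|_{L^{2}}^{2}$. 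Gr\"onwall's lemma together with $w(0)=0$ forces $w\equiv 0$, completing the argument.
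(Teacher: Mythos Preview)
Your proposal is correct and follows essentially the same route as the paper's proof (given in the Appendix): Galerkin approximation, testing with $u_{n}$ and $-\Delta u_{n}$ (equivalently $u_{n}-\Delta u_{n}$) to obtain a Riccati-type differential inequality for $\|u_{n}\|_{H^{1}}$, then compactness via Aubin--Lions/Simon to pass to the limit, with uniqueness by a Gr\"onwall argument at the $L^{2}$ level. The paper records the specific bound $\tfrac{d}{dt}\|u^{n}\|_{H^{1}}\le c(1+\|u^{n}\|_{H^{1}})^{3}$ and the resulting explicit time $T^{\ast}=\tfrac{1}{2c(1+\|u^{in}\|_{H^{1}})^{2}}$, but otherwise your outline matches and is in fact more detailed than the paper's sketch.
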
  
While the proof is standard and follows from the works within \cite{Larios_Yamazaki_2020_rKSE}, because we could not locate its proof in the literature, especially the case $N =3$, for completeness we sketch its outline in the Appendix. On the other hand, by \cite[Theorem  1]{Bellout_Benachour_Titi_2003}, we know that \eqref{KSE_scalar} is locally well-posed in $L^{2}(\mathbb{T}^{N})$ for both $N \in \{2,3\}$. 

Let us recall two prominent regularity criteria for the 3D NSE \eqref{NSE}: 
\begin{subequations}\label{estimate 10}
\begin{align}
& u \in L_{T}^{r}L_{x}^{p} \text{ where } \frac{3}{p} + \frac{2}{r} \leq 1,  p \in [3,\infty], \label{Serrin criteria}\\
& \nabla u \in L_{T}^{r} L_{x}^{p} \text{ where } \frac{3}{p} + \frac{2}{r} \leq 2, p\in [\tfrac{9}{4}, \infty] \label{Beirao criteria}
\end{align} 
\end{subequations}
due to \cite{Beirao_1995, Iscauriaza_Seregin_Sverak_2003, Prodi_1959, Serrin_1962}. Let us follow the proof of \eqref{Serrin criteria} on NSE \eqref{NSE} to examine what type of criteria we may expect for the $N$D KSE. Taking $L^{2}(\mathbb{T}^{N})$-inner products on \eqref{KSE} with $-\Delta u$ gives us 
\begin{equation}\label{1.6}
\frac{1}{2} \frac{d}{dt} \lVert \nabla u \rVert_{L^{2}}^{2} + \lVert \Delta \nabla u \rVert_{L^{2}}^{2} = \int (u\cdot\nabla) u \cdot \Delta u + \lambda \lVert \Delta u \rVert_{L^{2}}^{2}. 
\end{equation} 
Let us already mention that in the case of the KSE \eqref{KSE}, we need to additionally estimate $L^{2}(\mathbb{T}^{N})$-norm so that the sum with \eqref{1.6} gives an $H^{1}(\mathbb{T}^{N})$-estimate because the solution to the KSE \eqref{KSE} does not preserve mean-zero property in sharp contrast to the NSE as $\int (u\cdot\nabla)u \neq 0$ (this is certainly not a problem if we assume $u = \nabla \phi$). As we will see in \eqref{star}-\eqref{estimate 7}, this will not give us much trouble, and because we only want to illustrate heuristics here, let us omit these details.  Now, for clarity, we consider $p \in (N, \infty)$ first, and the case $p = \infty$ afterward. We estimate by H$\ddot{\mathrm{o}}$lder's inequality
\begin{equation}\label{1.7}
\int (u\cdot\nabla) u \cdot \Delta u \leq \lVert u \rVert_{L^{p}} \lVert \nabla u \rVert_{L^{2}} \lVert \Delta u \rVert_{L^{\frac{2p}{p-2}}}.  
\end{equation} 
We can continue by Gagliardo-Nirenberg inequality of 
\begin{align}
\lVert \nabla f \rVert_{L^{\frac{2p}{p-2}}} \lesssim& \lVert f \rVert_{L^{2}}^{\frac{p-N}{2p}} \lVert f \rVert_{H^{2}}^{\frac{p+N}{2p}} \label{1.8} \\
& \text{ for } p\in 
\begin{cases}
(N, \infty]  & \text{ if } N = 2, \\
[N, \infty] & \text{ if } N = 3
\end{cases}   \nonumber 
\end{align}
to bound by 
\begin{align}
\int (u\cdot\nabla) u \cdot \Delta u \lesssim& \lVert u \rVert_{L^{p}} \lVert \nabla u \rVert_{L^{2}} \lVert \nabla u \rVert_{L^{2}}^{\frac{p-N}{2p}} \lVert  \nabla u \rVert_{H^{2}}^{\frac{p+N}{2p}} \nonumber \\
\leq& \frac{1}{4} \lVert \Delta \nabla u \rVert_{L^{2}}^{2} + C \Big(\lVert u \rVert_{L^{p}}^{\frac{4p}{3p-N}} + \lVert u \rVert_{L^{p}}\Big) \lVert \nabla u \rVert_{L^{2}}^{2} \label{1.9}
\end{align} 
by Young's inequality. This estimate is not valid for the case $N = p = 2$ because it is false that $\lVert \Delta u \rVert_{L^{\infty}}$ can be bounded by a constant multiple of $\lVert \Delta u \rVert_{H^{1}}$ when $N  = 2$. In case $p = + \infty$, we easily use the interpolation of 
\begin{align}
\lVert f \rVert_{\dot{H}^{2}}^{2} \leq \Big( \int \lvert \lvert \xi \rvert^{3} \lvert \hat{f} (\xi) \rvert \rvert^{2} d \xi\Big)^{\frac{1}{2}} \Big( \int \lvert \lvert \xi \rvert \lvert \hat{f} (\xi) \rvert \rvert^{2} d\xi \Big)^{\frac{1}{2}}  
= \lVert f \rVert_{\dot{H}^{3}} \lVert  f \rVert_{\dot{H}^{1}} \label{estimate 1}
\end{align} 
and estimate 
\begin{align}
\int (u\cdot\nabla) u \cdot \Delta u \leq& \lVert u \rVert_{L^{\infty}} \lVert \nabla u \rVert_{L^{2}} \lVert \Delta u \rVert_{L^{2}} \nonumber\\
\leq& \frac{1}{4} \lVert \Delta \nabla u \rVert_{L^{2}}^{2} + C \lVert u \rVert_{L^{\infty}}^{\frac{4}{3}} \lVert \nabla u \rVert_{L^{2}}^{2}\label{1.10}
\end{align} 
by Young's inequality. Using \eqref{estimate 1} we can also estimate the linear term 
\begin{equation}\label{1.11}
\lambda \lVert \Delta u \rVert_{L^{2}}^{2} \leq \lambda \lVert \Delta \nabla u \rVert_{L^{2}} \lVert \nabla u \rVert_{L^{2}} \leq \frac{1}{4} \lVert \Delta \nabla u \rVert_{L^{2}}^{2} + C \lVert \nabla u \rVert_{L^{2}}^{2}. 
\end{equation} 
Applying \eqref{1.9}-\eqref{1.11} to \eqref{1.6} leads to 
\begin{equation*}
\frac{1}{2} \frac{d}{dt} \lVert \nabla u \rVert_{L^{2}}^{2} + \frac{1}{2} \lVert \Delta \nabla u \rVert_{L^{2}}^{2}  \lesssim \lVert \nabla u \rVert_{L^{2}}^{2} ( \lVert u \rVert_{L^{p}}^{\frac{4p}{3p-N}} + \lVert u \rVert_{L^{p}} + 1) 
\end{equation*} 
with $\frac{4p}{3p-N} = \frac{4}{3}$ in case $p = \infty$. 
This leads to a regularity criteria of 
\begin{align}
u \in L_{T}^{r}L_{x}^{p} 
&\text{ where } 
\begin{cases}
p \in (N, \infty], r \in [\frac{4}{3}, 2) & \text{ if } N = 2, \\
p \in [N, \infty], r \in [\frac{4}{3}, 2] & \text{ if } N = 3, 
\end{cases} \nonumber \\
& \text{ satisfy } \frac{N}{p} + \frac{2}{r} = \frac{3}{2} + \frac{N}{2p}. \label{1.12} 
\end{align}
\begin{remark}\label{Remark 1.4}
If we take $N = 2$ and then $p > 2$ arbitrarily close to 2 in \eqref{1.12}, then we see that the right hand side of \eqref{1.12} is almost 2, very reminiscent of \eqref{Beirao criteria} in terms of $\nabla u$ for the 3D NSE \eqref{NSE}. In fact, \eqref{1.12} may be heuristically explained via scaling argument as follows. Informally assuming that $\lambda = 0$ in \eqref{KSE}, we realize that if $u(t,x)$ is its solution, then so is $u_{\beta} (t,x) \triangleq \beta^{3} u(\beta^{4} t, \beta x)$ and $\lVert u_{\beta} \rVert_{L_{T}^{r} L_{x}^{p}} = \lVert u \rVert_{L_{\beta^{4} T}^{r} L_{x}^{p}}$ if and only if $\frac{N}{p} + \frac{2}{r} = \frac{3}{2} + \frac{N}{2p}$. We are indebted to one of the referees for pointing this out. 
\end{remark} 

As we emphasized after \eqref{1.9}, we cannot obtain the case $N = p = 2$ due to the fact that $H^{1}(\mathbb{T}^{2})\hookrightarrow L^{\infty} (\mathbb{T}^{2})$ is false in general. For the NSE, there are typically div-curl lemma or Brezis-Wainger type inequality arguments to work around this issue and improve $p > N$ to $p = N$. Unfortunately, both methods faced technical difficulty for the KSE. In fact, we will actually see in Theorem \ref{Theorem 2.1} that it is possible to improve \eqref{1.12} significantly by rather an unconventional approach that seems unique to the KSE. 

Very recently, the research direction of component reduction from the criteria in \eqref{estimate 10} for the 3D NSE has received much attention. While we acknowledge that there have been many extensions and improvements, let us list the following examples as a reference: starting from an initial data that is sufficiently smooth, there exists a unique solution to the 3D NSE for all $t > 0$ if for any $j, k \in \{1,2,3\}$, 
\begin{subequations}\label{estimate 11} 
\begin{align}
& u_{j} \in L_{T}^{r}L_{x}^{p} \text{ where } \frac{3}{p} + \frac{2}{r} \leq \frac{5}{8}, \hspace{1mm} p \in [\tfrac{24}{5}, \infty] \label{estimate 12}\\
& \partial_{j} u_{j} \in L_{T}^{r}L_{x}^{p} \text{ where } \frac{3}{p} + \frac{2}{r} < \frac{ 3(p+2)}{4p}, \hspace{1mm} p > 2, \hspace{1mm} r \in [1, \infty), \label{estimate 13} \\
& \partial_{k} u_{j} \in L_{T}^{r} L_{x}^{p} \text{ for } k \neq j \text{ where } \frac{3}{p} + \frac{2}{r} < \frac{p+3}{2p},  \hspace{1mm} p > 3, \hspace{1mm} r \in [1, \infty), \label{estimate 14} 
\end{align}
\end{subequations}
due to  \cite{CT11, Kukavica_Ziane_2006} (see also \cite{Cao_Titi_2008, Kukavica_Ziane_2007_JMP, ZP10}). We point out that while the dependence on $u = (u_{1}, u_{2}, u_{3})$ or $\nabla u = (\partial_{j}u_{k})_{1\leq j,k\leq 3}$ was remarkably reduced down respectively to $u_{j}$, $\partial_{j}u_{j}$, or $\partial_{k}u_{j}$, the integrability condition of 1 and 2 in \eqref{estimate 10} deteriorated respectively to $\frac{5}{8}$,  $\frac{3(p+2)}{4p}$, or $\frac{p+3}{2p}$ in \eqref{estimate 11}. This is the price one must pay for reducing components; to the best of our knowledge, despite much effort by many mathematicians, it remains unknown if we can improve $\frac{5}{8}$ up to 1 or $\frac{3(p+2)}{4p}, \frac{p+3}{2p}$ up to 2, as in \eqref{estimate 10}. All the proofs of such component reduction results rely crucially on the divergence-free property of $u$ in \eqref{NSE}, and some of such results have been extended to other systems of PDEs that involve divergence-free velocity field (e.g., \cite{Cao_Wu_2010, Yamazaki_2016} on MHD system; \cite{Yamazaki_2013_SQG} on SQG equations). In short, the proof of \eqref{estimate 12} relies on the fact that upon $\lVert (\partial_{1}u, \partial_{2}u) \rVert_{L^{2}}^{2}$-estimate of the 3D NSE \eqref{NSE}, one can separate $u_{3}$ within the nonlinear terms as follows:
\begin{align*}
\int (u\cdot\nabla)u \cdot (\partial_{1}^{2} + \partial_{2}^{2}) u =& - \sum_{i,j=1}^{3}\sum_{k=1}^{2}\int \partial_{k} u_{i}\partial_{i}u_{j} \partial_{k}u_{j} \\
\lesssim& \int \lvert u_{3} \rvert \lvert \nabla u \rvert \lvert \nabla (\partial_{1},\partial_{2}) u \rvert.
\end{align*}
E.g., we can estimate by summing the terms when $(i,j,k) = (1,1,2)$ and $(i,j,k) = (2,1,2)$ to bound 
\begin{align*}
\int \partial_{2}u_{1}\partial_{1}u_{1}\partial_{2}u_{1} + \partial_{2}u_{2}\partial_{2}u_{1}\partial_{2}u_{1} =& - \int (\partial_{2}u_{1})^{2}\partial_{3}u_{3} \\
\lesssim&  \int \lvert u_{3} \rvert \lvert \nabla u \rvert \lvert \nabla (\partial_{1}, \partial_{2}) u  \rvert 
\end{align*} 
where the divergence-free property played an important role. Despite the large number of papers devoted to this direction of research, we are not aware of any example of a PDE that admitted such component reduction of regularity criteria without a divergence-free property. Remarkably, we have been able to prove such results for the KSE (see Theorems \ref{Theorem 2.7}, \ref{new_theorem}, \ref{Theorem 2.13}, and \ref{Theorem 3.14}).  

\section{\texorpdfstring{Regularity criteria for $N$d KSE and their proofs}{}}\label{sec_reg}
The following computations hold on $\mathbb{R}^{N}$ or $\mathbb{T}^{N}$ for $N \in \{2,3\}$, but we choose to focus on the latter, and also elaborate more in the case $N = 2$ as our main concern. Results in Subsections \ref{Subsection 3.1}-\ref{Subsection 3.4} focus on \eqref{KSE} independently of \eqref{KSE_scalar} while Subsection \ref{Subsection 3.5} focuses on \eqref{KSE_scalar} independently of \eqref{KSE}. 

\subsection{\texorpdfstring{Result on $u$}{}}\label{Subsection 3.1}
Let us continue the $\dot{H}^{1}(\mathbb{T}^{N})$-estimate from \eqref{1.6}. For the sake of generality, let us also aim to attain a criterion in $W_{x}^{m,p}$ for $m \in [0, 1)$ so that taking $m = 0$ reduces to the standard criterion in $L_{x}^{p}$. We proceed as follows: 
\begin{align}\label{2.0}
\int (u\cdot\nabla) u \cdot \Delta u 
\leq \lVert u \rVert_{L^{\frac{Np}{N- mp}}} \lVert \nabla u \rVert_{L^{\frac{2Np}{Np + mp - N}}} \lVert \Delta u \rVert_{L^{\frac{2Np}{Np + mp - N}}}
\end{align} 
by H$\ddot{\mathrm{o}}$lder's inequality for $p$ in an appropriate range to be specified subsequently. Now we use two Gagliardo-Nirenberg inequalities: 
\begin{subequations}
\begin{align}
 \lVert f \rVert_{L^{\frac{2N p}{N p + mp - N}}} \lesssim& \lVert f \rVert_{L^{2}}^{\frac{ (4+ m) p - N}{4p}} \lVert f \rVert_{H^{2}}^{\frac{N - mp}{4p}}   \label{GN1} \\
& \text{ for any } N \in \{2,3\}, p \in 
\begin{cases}
[1, \frac{N}{m}]  &\text{ if } m \in (0,1), \\
[1, \infty] & \text{ if } m = 0, \\
\end{cases} \nonumber \\
 \lVert \nabla f \rVert_{L^{\frac{2N p}{N p + mp - N}}} \lesssim& \lVert f \rVert_{L^{2}}^{\frac{ (2+ m) p -N}{4p}} \lVert f \rVert_{H^{2}}^{\frac{ (2-m) p + N}{4p}} \label{GN2}\\
& 
\begin{cases}
p \in [1, \infty] & \text{ if } N = 2, m \in (0,1), \\
p \in [\frac{3}{m+2}, \infty] & \text{ if } N = 3, m \in (0,1),\\
p \in (1, \infty] & \text{ if } N = 2, m = 0, \\
p \in [\frac{3}{2}, \infty] & \text{ if } N = 3, m=0.
\end{cases}
\nonumber 
\end{align}
\end{subequations}   
Now if $m \in (0,1)$, then we apply Sobolev embedding $W^{m,p}(\mathbb{T}^{N}) \hookrightarrow L^{\frac{Np}{N-mp}} (\mathbb{T}^{N})$ assuming $p < \frac{N}{m}$ as part of hypothesis in case $m \in (0,1)$ and see that 
\begin{align}
\int (u\cdot\nabla) u \cdot \Delta u \lesssim&  \lVert u \rVert_{W^{m,p}} \lVert \nabla u \rVert_{L^{2}}^{\frac{ (m+3) p - N}{2p}} \lVert \nabla u \rVert_{H^{2}}^{\frac{ (1-m) p + N}{2p}} \nonumber \\
\leq& \frac{1}{8} \lVert \Delta \nabla u \rVert_{L^{2}}^{2} + C( \lVert u \rVert_{W^{m,p}}^{\frac{ 4p}{(3+m) p - N}} + 1) \lVert \nabla u \rVert_{L^{2}}^{2} \label{estimate 6}
\end{align}  
by Young's inequality. As mentioned, we also need an $L^{2}(\mathbb{T}^{N})$ estimate; thus, taking $L^{2}(\mathbb{T}^{N})$ inner-products on \eqref{KSE} with $u$ we obtain 
\begin{equation}\label{star}
\frac{1}{2} \frac{d}{dt} \lVert u \rVert_{L^{2}}^{2}+  \lVert \Delta u \rVert_{L^{2}}^{2} = \lambda \lVert \nabla u \rVert_{L^{2}}^{2} - \int (u\cdot\nabla) u \cdot u 
\end{equation}
where we estimate 
\begin{align}
& \int (u\cdot\nabla ) u \cdot u  \nonumber \\
\leq& \lVert u \rVert_{L^{\frac{N p}{N - mp}}} \lVert u \rVert_{L^{\frac{2N p}{N p + mp - N}}} \lVert \nabla u \rVert_{L^{\frac{2N p}{N p + mp - N}}} \nonumber \\
\lesssim&  \lVert u \rVert_{W^{m,p}} \lVert u \rVert_{L^{2}}^{\frac{ (m+3) p - N}{2p}} \lVert u \rVert_{H^{2}}^{\frac{ (1-m) p + N}{2p}} \nonumber \\
\lesssim& \lVert u \rVert_{W^{m,p}} \Big( \lVert u \rVert_{L^{2}}^{\frac{ (m+3) p - N}{2p}} \lVert \nabla u \rVert_{L^{2}}^{\frac{ (1-m) p + N}{4p}} \lVert \Delta \nabla u \rVert_{L^{2}}^{\frac{ (1-m) p +N}{4p}} + \lVert u \rVert_{L^{2}}^{2}\Big) \nonumber \\
\leq& \frac{1}{8} \lVert \Delta \nabla u \rVert_{L^{2}}^{2} + C\Big( 1+ \lVert u \rVert_{W^{m,p}} + \lVert u \rVert_{W^{m,p}}^{\frac{ 4p}{(3+m) p - N}}\Big) \lVert u \rVert_{H^{1}}^{2}  \label{estimate 7}
\end{align} 
by H$\ddot{\mathrm{o}}$lder's inequality, Sobolev embedding $W^{m,p} (\mathbb{T}^{N}) \hookrightarrow L^{\frac{Np}{N-mp}}(\mathbb{T}^{N})$ assuming $p < \frac{N}{m}$ in case $m \in (0,1)$ as part of hypothesis, \eqref{GN1}-\eqref{GN2}, and Young's inequality. Applying \eqref{1.11} and  \eqref{estimate 6} to \eqref{1.6}, and \eqref{estimate 7} to \eqref{star}, we showed the following criteria:
\begin{theorem}\label{Theorem 2.1}
Suppose $u \in L_{T}^{r}W_{x}^{m,p}$ where $m \in [0, 1)$, and 
\begin{equation}\label{range1}
\begin{cases} 
p \in [1, \frac{N}{m}),  r \in (\frac{4}{3}, \frac{4}{1+m}] & \text{ if } N = 2, m \in (0, 1),\\
p \in [\frac{N}{m+2}, \frac{N}{m}),  r \in (\frac{4}{3}, 4]& \text{ if } N = 3,  m \in (0,1),
\end{cases}
\end{equation} 
and 
\begin{equation}\label{range2}
\begin{cases} 
p \in ( \frac{N}{2}, \infty],  r \in [\frac{4}{3}, 4) & \text{ if } N = 2, m =0,\\
p \in [\frac{N}{2}, \infty],  r \in [\frac{4}{3}, 4]& \text{ if } N = 3,  m = 0,
\end{cases}
\end{equation} 
satisfy 
\begin{align}
\frac{N}{p} + \frac{2}{r} = \frac{3+m}{2} + \frac{N}{2p}. \label{range3}
\end{align}
Then the $N$D KSE is globally well-posed in $H^{1} (\mathbb{T}^{N})$. 
\end{theorem}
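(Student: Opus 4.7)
My plan is essentially to stitch together the pieces that have already been derived in the text immediately preceding the theorem and close the argument with a Grönwall-type bootstrap combined with the local well-posedness result in Theorem \ref{Theorem 2.2}. Because the theorem is an $H^1$ regularity criterion and the $L^2$ norm is not conserved, I will have to track both a homogeneous $\dot H^1$ estimate and an $L^2$ estimate simultaneously.

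First, I would add \eqref{1.6} and \eqref{star} to obtain an $H^1$ energy identity
\begin{equation*}
\frac{1}{2}\frac{d}{dt}\|u\|_{H^1}^2 + \|\Delta u\|_{L^2}^2 + \|\Delta\nabla u\|_{L^2}^2
= \lambda\bigl(\|\nabla u\|_{L^2}^2+\|\Delta u\|_{L^2}^2\bigr) - \int (u\cdot\nabla)u\cdot u + \int(u\cdot\nabla)u\cdot \Delta u.
\end{equation*}
I would then invoke \eqref{estimate 6} to control the second nonlinear term, \eqref{estimate 7} to control the first nonlinear term (together with the $\lambda\|\nabla u\|_{L^2}^2$ contribution absorbed into the $L^2$ part of $H^1$), and \eqref{1.11} (or an analogous interpolation) to handle the destabilizing $\lambda\|\Delta u\|_{L^2}^2$ term. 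Absorbing the three $\tfrac{1}{8}\|\Delta\nabla u\|_{L^2}^2$ remainders into the left-hand side yields a differential inequality of the form
\begin{equation*}
\frac{d}{dt}\|u\|_{H^1}^2 + \|\Delta u\|_{L^2}^2 + \|\Delta\nabla u\|_{L^2}^2
\;\lesssim\; \Bigl(1 + \|u\|_{W^{m,p}} + \|u\|_{W^{m,p}}^{\frac{4p}{(3+m)p - N}}\Bigr)\|u\|_{H^1}^2 .
\end{equation*}

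Next, Grönwall's inequality applied to this bound shows that $\|u(t)\|_{H^1}^2$ remains finite on $[0,T]$ precisely when the driving factor is time-integrable, i.e. when $\|u\|_{W^{m,p}}^{4p/((3+m)p - N)}$ is in $L^1_T$. Setting $r = 4p/((3+m)p - N)$ and solving for $r$ in terms of $p$ and $m$ gives exactly the scaling relation \eqref{range3}. Combined with the local existence statement in Theorem \ref{Theorem 2.2} and a standard continuation argument, the $H^1$ bound prevents blow-up and thus upgrades the local strong solution to a global one.

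The principal technical care, and the place where the parameter ranges \eqref{range1}--\eqref{range2} are forced, is not in any deep new estimate but in verifying that all the intermediate ingredients — the Hölder split in \eqref{2.0}, the two Gagliardo--Nirenberg inequalities \eqref{GN1} and \eqref{GN2}, and the Sobolev embedding $W^{m,p}(\mathbb T^N)\hookrightarrow L^{Np/(N-mp)}(\mathbb T^N)$ — are simultaneously valid. In particular, the upper bound $p<N/m$ for $m\in(0,1)$ comes from the Sobolev embedding, the lower bounds $p\ge N/(m+2)$ (resp.\ $p\ge N/2$ when $m=0$) come from the admissibility of \eqref{GN2}, and the restrictions on $r$ come from requiring the Young's inequality exponent $\tfrac{4p}{(3+m)p - N}$ to be finite and to reproduce the scaling \eqref{range3}. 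I expect the endpoint cases ($r=4$ or $r = 4/(1+m)$, depending on dimension) to require an additional comment: when $r$ hits the endpoint, the Grönwall step needs the standard observation that an $L^r_T W^{m,p}_x$ assumption is absorbed into an essentially supremum control on a sufficiently small time interval, exactly as in the classical Serrin-type arguments. With these range checks in place the estimates chain together cleanly and no further ingredient beyond the ones already displayed in the text is needed.
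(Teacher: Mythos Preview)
Your proposal is correct and follows essentially the same route as the paper: the text immediately preceding the theorem already derives \eqref{estimate 6}, \eqref{estimate 7}, and \eqref{1.11}, and the paper concludes Theorem~\ref{Theorem 2.1} precisely by applying \eqref{1.11} and \eqref{estimate 6} to \eqref{1.6}, and \eqref{estimate 7} to \eqref{star}, then invoking Gr\"onwall. Your discussion of where the parameter ranges \eqref{range1}--\eqref{range2} come from is accurate and slightly more explicit than what the paper records.
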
 
\begin{remark}\label{Remark 3.2}
In comparison to Remark \ref{Remark 1.4},  because we improved the range of $p$ in \eqref{1.12} to \eqref{range2} in Theorem \ref{Theorem 2.1}, the right hand side of \eqref{range3} with $m =0$ can achieve $\frac{3}{2}+ \frac{N}{2p} > 2$ for $p \in (\frac{N}{2}, N)$; in fact, taking $p \approx \frac{N}{2}$ gives $\frac{3}{2} + \frac{N}{2p}$ to be almost $\frac{5}{2}$. We recall from Remark \ref{Remark 1.4} that $\frac{N}{p} + \frac{2}{r} = \frac{3}{2} + \frac{N}{2p}$ can be explained heuristically via scaling argument. We also emphasize that this new approach in the proof of Theorem \ref{Theorem 2.1} is impossible for the NSE. Indeed, if we choose $\lVert \Delta u \rVert_{L^{\frac{2Np}{Np + mp - N}}}$ in the first H$\ddot{\mathrm{o}}$lder's inequality of \eqref{2.0}, then this term already cannot be handled by the diffusion in the NSE, specifically $\lVert \Delta u \rVert_{L^{2}}^{2}$ in $H^{1}$-estimate. It is the fourth-order diffusion in the KSE that is allowing us to proceed with this new approach.
\end{remark}

\subsection{\texorpdfstring{Result on $u_{1}$}{}}\label{Subsection 3.2}
Next, we obtain a criterion on only one component $u_{1}$ in the 2D case while on $(u_{1}, u_{2})$ in the 3D case.  

\begin{proposition}\label{Proposition 2.3}
Suppose $u_{1} \in L_{T}^{r} W_{x}^{m,p}$ where $m \in [0,1)$ and 
\begin{equation*}
\begin{cases}
p \in [1, \frac{2}{m}), r \in (\frac{4}{3}, \frac{4}{1+m}] & \text{ if } m \in (0,1), \\
p \in (1, \infty], r \in [\frac{4}{3}, 4) & \text{ if } m = 0, 
\end{cases} 
\end{equation*} 
satisfy 
\begin{align*}
\frac{2}{p} + \frac{2}{r} = \frac{1}{p} + \frac{3+m}{2}. 
\end{align*}
Then $u_{2} \in L_{T}^{\infty} L_{x}^{2} \cap L_{T}^{2} H_{x}^{2}$. 
\end{proposition}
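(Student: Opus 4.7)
The plan is to isolate the scalar evolution equation for the single component $u_{2}$ and to perform a direct $L^{2}(\mathbb{T}^{2})$ energy estimate in which only $u_{1}$ enters the forcing. Writing \eqref{KSE} componentwise with $N = 2$ yields
\begin{equation*}
\partial_{t}u_{2} + u_{1}\partial_{1}u_{2} + u_{2}\partial_{2}u_{2} + \lambda\Delta u_{2} + \Delta^{2}u_{2} = 0,
\end{equation*}
and taking an $L^{2}$-inner product with $u_{2}$ gives
\begin{equation*}
\tfrac{1}{2}\tfrac{d}{dt}\lVert u_{2}\rVert_{L^{2}}^{2} + \lVert\Delta u_{2}\rVert_{L^{2}}^{2} = \lambda\lVert\nabla u_{2}\rVert_{L^{2}}^{2} - \int u_{1}u_{2}\partial_{1}u_{2} - \int u_{2}^{2}\partial_{2}u_{2}.
\end{equation*}
The decisive structural input is that the self-transport term vanishes, $\int u_{2}^{2}\partial_{2}u_{2} = \tfrac{1}{3}\int \partial_{2}(u_{2}^{3}) = 0$, by periodicity. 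This is precisely the cancellation that permits reduction to a single component without any divergence-free condition on $u$; it has no analogue among the remaining nonlinear terms. The only surviving nonlinearity is thereby linear in $u_{1}$.

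For $\int u_{1}u_{2}\partial_{1}u_{2}$ I would adopt the trilinear H\"older splitting used in \eqref{estimate 7},
\begin{equation*}
\Big|\int u_{1}u_{2}\partial_{1}u_{2}\Big| \leq \lVert u_{1}\rVert_{L^{\frac{2p}{2-mp}}} \lVert u_{2}\rVert_{L^{\frac{4p}{2p+mp-2}}} \lVert\nabla u_{2}\rVert_{L^{\frac{4p}{2p+mp-2}}},
\end{equation*}
whose exponents satisfy H\"older's identity. The Sobolev embedding $W^{m,p}(\mathbb{T}^{2}) \hookrightarrow L^{\frac{2p}{2-mp}}(\mathbb{T}^{2})$, valid under the hypothesized ranges of $(m,p)$, bounds the first factor by $\lVert u_{1}\rVert_{W^{m,p}}$, while the Gagliardo-Nirenberg inequalities \eqref{GN1} and \eqref{GN2} with $N = 2$ and $f = u_{2}$ bound the last two by products of $\lVert u_{2}\rVert_{L^{2}}$ and $\lVert u_{2}\rVert_{H^{2}}$. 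A short arithmetic check shows that the total power on $\lVert u_{2}\rVert_{H^{2}}$ equals $\frac{2 + p - mp}{2p}$, which is strictly less than $2$ under the standing assumptions on $p$, so Young's inequality yields
\begin{equation*}
\Big|\int u_{1}u_{2}\partial_{1}u_{2}\Big| \leq \tfrac{1}{4}\lVert\Delta u_{2}\rVert_{L^{2}}^{2} + C\Big(1 + \lVert u_{1}\rVert_{W^{m,p}}^{\frac{4p}{(3+m)p - 2}}\Big)\lVert u_{2}\rVert_{L^{2}}^{2}.
\end{equation*}
Handling the linear term via $\lVert\nabla u_{2}\rVert_{L^{2}}^{2} \leq \lVert u_{2}\rVert_{L^{2}}\lVert\Delta u_{2}\rVert_{L^{2}}$ exactly as in \eqref{1.11}, I would close the inequality into
\begin{equation*}
\tfrac{d}{dt}\lVert u_{2}\rVert_{L^{2}}^{2} + \lVert\Delta u_{2}\rVert_{L^{2}}^{2} \leq C\Big(1 + \lVert u_{1}\rVert_{W^{m,p}}^{\frac{4p}{(3+m)p - 2}}\Big)\lVert u_{2}\rVert_{L^{2}}^{2}.
\end{equation*}

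The scaling identity assumed in the proposition rearranges to $r = \frac{4p}{(3+m)p - 2}$, so the hypothesis $u_{1} \in L_{T}^{r}W_{x}^{m,p}$ is exactly what places the coefficient on the right-hand side in $L_{T}^{1}$. Gr\"onwall's inequality then yields $u_{2} \in L_{T}^{\infty}L_{x}^{2}$, and integrating the differential inequality in time subsequently gives $\Delta u_{2} \in L_{T}^{2}L_{x}^{2}$, which together with $\lVert u_{2}\rVert_{H^{2}} \lesssim \lVert u_{2}\rVert_{L^{2}} + \lVert\Delta u_{2}\rVert_{L^{2}}$ on the torus delivers $u_{2} \in L_{T}^{2}H_{x}^{2}$. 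The main obstacle is the trilinear bookkeeping: one must resist integrating $\partial_{1}$ off $u_{2}$ (which would cost a derivative on $u_{1}$ unavailable in $W^{m,p}$ when $m < 1$), while still arranging the H\"older and Gagliardo-Nirenberg exponents so that the power on $\lVert u_{2}\rVert_{H^{2}}$ stays below $2$ and the resulting exponent on $\lVert u_{1}\rVert_{W^{m,p}}$ reproduces the scaling relation in the hypothesis.
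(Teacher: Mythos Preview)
Your proof is correct and follows essentially the same route as the paper's: the $L^{2}$ energy estimate on the $u_{2}$-equation, the cancellation $\int u_{2}^{2}\partial_{2}u_{2}=0$, the identical H\"older splitting $\lVert u_{1}\rVert_{L^{\frac{2p}{2-mp}}}\lVert u_{2}\rVert_{L^{\frac{4p}{2p+mp-2}}}\lVert\nabla u_{2}\rVert_{L^{\frac{4p}{2p+mp-2}}}$, Sobolev embedding on the first factor, \eqref{GN1}--\eqref{GN2} on the other two, and Young's inequality to absorb $\lVert\Delta u_{2}\rVert_{L^{2}}^{2}$ before closing with Gr\"onwall. The only cosmetic difference is that the paper records the Young remainder as $C\big(\lVert u_{1}\rVert_{W^{m,p}}^{\frac{4p}{(3+m)p-2}}+\lVert u_{1}\rVert_{W^{m,p}}\big)\lVert u_{2}\rVert_{L^{2}}^{2}$ rather than your $C\big(1+\lVert u_{1}\rVert_{W^{m,p}}^{\frac{4p}{(3+m)p-2}}\big)\lVert u_{2}\rVert_{L^{2}}^{2}$, which is immaterial for the Gr\"onwall step.
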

\begin{proof}
Taking $L^{2}(\mathbb{T}^{2})$-inner products with $u_{2}$ on the second component of \eqref{KSE} gives 
\begin{equation}\label{2.2o}
\frac{1}{2} \frac{d}{dt} \lVert u_{2} \rVert_{L^{2}}^{2} + \lVert \Delta u_{2} \rVert_{L^{2}}^{2} = - \int (u\cdot\nabla) u_{2} u_{2} + \lambda \lVert \nabla u_{2} \rVert_{L^{2}}^{2}
\end{equation} 
where 
\begin{align}
\lambda \lVert \nabla u_{2} \rVert_{L^{2}}^{2} \leq \lambda \lVert u_{2} \rVert_{L^{2}} \lVert \Delta u_{2} \rVert_{L^{2}} \leq \frac{1}{4} \lVert \Delta u_{2} \rVert_{L^{2}}^{2} + C \lVert u_{2} \rVert_{L^{2}}^{2}, \label{estimate 2}
\end{align} 
and 
\begin{align}
- \int (u\cdot\nabla) u_{2} u_{2} =& -\int u_{1} (\partial_{1} u_{2}) u_{2} - \int u_{2} \partial_{2} u_{2} u_{2}\nonumber \\
=& - \int u_{1} (\partial_{1} u_{2}) u_{2} - \int \frac{1}{3} \partial_{2} (u_{2})^{3} \nonumber \\
\leq& \lVert u_{1} \rVert_{L^{\frac{2p}{2-mp}}} \lVert u_{2} \rVert_{L^{\frac{4p}{2p + mp-2}}} \lVert \nabla u_{2} \rVert_{L^{\frac{4p}{2p + mp - 2}}} \nonumber\\
\lesssim& \lVert u_{1} \rVert_{W^{m,p}} \lVert u_{2} \rVert_{L^{2}}^{\frac{ (3+m) p - 2}{2p}} \lVert u_{2} \rVert_{H^{2}}^{\frac{ (1-m) p + 2}{2p}} \nonumber \\
\leq& \frac{1}{4} \lVert \Delta u_{2} \rVert_{L^{2}}^{2} + C \Big( \lVert u_{1} \rVert_{W^{m,p}}^{\frac{ 4p}{(3+m) p - 2}} + \lVert u_{1} \rVert_{W^{m,p}} \Big) \lVert u_{2} \rVert_{L^{2}}^{2} \label{2.2a} 
\end{align} 
by H$\ddot{\mathrm{o}}$lder's inequality, Sobolev embedding $W^{m,p} (\mathbb{T}^{2}) \hookrightarrow L^{\frac{2p}{2-mp}}(\mathbb{T}^{2})$ if $m \in (0,1)$ so that $p < \frac{2}{m}$ by hypothesis, \eqref{GN1}-\eqref{GN2}, and Young's inequality. Applying \eqref{estimate 2}-\eqref{2.2a} to \eqref{2.2o} implies the desired result. 
\end{proof} 

\begin{proposition}\label{Proposition 2.4}
Suppose $u_{1} \in L_{T}^{r} W_{x}^{m,p}$ where $m \in [0,1)$ and 
\begin{equation*}
\begin{cases}
p \in [1, \frac{2}{m}), r \in (\frac{4}{3}, \frac{4}{1+m}] & \text{ if } m \in (0,1), \\
p \in (1, \infty], r \in [\frac{4}{3}, 4) & \text{ if } m = 0, 
\end{cases} 
\end{equation*} 
satisfy 
\begin{align*}
\frac{2}{p} + \frac{2}{r} = \frac{1}{p} + \frac{3+m}{2}. 
\end{align*}
Then $u_{1} \in L_{T}^{\infty} L_{x}^{2} \cap L_{T}^{2} H_{x}^{2}$. 
\end{proposition}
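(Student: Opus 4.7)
The plan is to mirror the proof of Proposition \ref{Proposition 2.3}, but now testing the first component of \eqref{KSE} against $u_{1}$, while invoking Proposition \ref{Proposition 2.3} itself as an auxiliary input. Under the present hypothesis, Proposition \ref{Proposition 2.3} already gives $u_{2} \in L_T^{\infty}L_x^{2} \cap L_T^{2}H_x^{2}$, and in particular $\partial_{2} u_{2} \in L_T^{2}L_x^{2}$. This ``free'' control on $u_{2}$ is exactly what I will need to handle the cross advection term.

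I would then take the $L^{2}(\mathbb{T}^{2})$-inner product of the first component of \eqref{KSE} with $u_{1}$ to obtain
\begin{equation*}
\tfrac{1}{2}\tfrac{d}{dt}\|u_{1}\|_{L^{2}}^{2} + \|\Delta u_{1}\|_{L^{2}}^{2} = -\int (u\cdot\nabla) u_{1}\,u_{1} + \lambda \|\nabla u_{1}\|_{L^{2}}^{2}.
\end{equation*}
As in Proposition \ref{Proposition 2.3}, the diagonal piece $-\int u_{1}(\partial_{1}u_{1})u_{1} = -\tfrac{1}{3}\int \partial_{1}(u_{1}^{3})$ vanishes by periodicity, and integration by parts in $x_{2}$ rewrites the surviving cross piece as $\tfrac{1}{2}\int (\partial_{2}u_{2})\,u_{1}^{2}$. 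H\"older's inequality, the 2D Ladyzhenskaya-type bound $\|f\|_{L^{4}}^{2} \lesssim \|f\|_{L^{2}}\|\nabla f\|_{L^{2}} + \|f\|_{L^{2}}^{2}$, the interpolation $\|\nabla f\|_{L^{2}} \leq \|f\|_{L^{2}}^{1/2}\|\Delta f\|_{L^{2}}^{1/2}$ on $\mathbb{T}^{2}$, and Young's inequality then yield
\begin{equation*}
\tfrac{1}{2}\int (\partial_{2}u_{2})\,u_{1}^{2} \leq \tfrac{1}{4}\|\Delta u_{1}\|_{L^{2}}^{2} + C\bigl(1+\|\partial_{2}u_{2}\|_{L^{2}}^{4/3}+\|\partial_{2}u_{2}\|_{L^{2}}\bigr)\|u_{1}\|_{L^{2}}^{2},
\end{equation*}
while the linear term $\lambda\|\nabla u_{1}\|_{L^{2}}^{2}$ is handled exactly as in \eqref{estimate 2}.

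Absorbing $\|\Delta u_{1}\|_{L^{2}}^{2}$ on the left leaves a differential inequality of the form
\begin{equation*}
\tfrac{d}{dt}\|u_{1}\|_{L^{2}}^{2} + \|\Delta u_{1}\|_{L^{2}}^{2} \lesssim \bigl(1+\|\partial_{2}u_{2}\|_{L^{2}}^{4/3}+\|\partial_{2}u_{2}\|_{L^{2}}\bigr)\|u_{1}\|_{L^{2}}^{2}.
\end{equation*}
Since $\|\partial_{2}u_{2}\|_{L^{2}} \in L_T^{2}$ forces the right-hand coefficient to lie in $L_T^{1}$, Gr\"onwall's inequality closes the estimate and delivers $u_{1} \in L_T^{\infty}L_x^{2} \cap L_T^{2}H_x^{2}$. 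The genuinely delicate step is the cross term $-\int u_{2}(\partial_{2}u_{1})u_{1}$, which features $u_{2}$ rather than $u_{1}$; there is no obvious way to close it purely from the hypothesis on $u_{1}$, and the whole argument hinges on combining the KSE-specific total-derivative cancellation with the \emph{a priori} bound on $u_{2}$ supplied by Proposition \ref{Proposition 2.3}.
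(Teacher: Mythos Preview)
Your proof is correct and follows essentially the same strategy as the paper: test the first component against $u_{1}$, drop the diagonal piece $-\int u_{1}\partial_{1}u_{1}\,u_{1}$, and close a Gr\"onwall argument by controlling the cross term through the $u_{2}\in L_{T}^{2}H_{x}^{2}$ bound supplied by Proposition~\ref{Proposition 2.3}. The only difference is in how the cross term is estimated: the paper leaves it as $-\int u_{2}\partial_{2}u_{1}\,u_{1}$ and applies the $L^{\infty}\times L^{2}\times L^{2}$ H\"older split together with $H^{2}(\mathbb{T}^{2})\hookrightarrow L^{\infty}(\mathbb{T}^{2})$, yielding the coefficient $\lVert u_{2}\rVert_{H^{2}}^{4/3}$; you first integrate by parts to $\tfrac{1}{2}\int(\partial_{2}u_{2})u_{1}^{2}$ and then use an $L^{2}\times L^{4}\times L^{4}$ split, yielding the coefficient $\lVert \partial_{2}u_{2}\rVert_{L^{2}}^{4/3}$. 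Both coefficients lie in $L_{T}^{1}$ thanks to Proposition~\ref{Proposition 2.3}, so either route closes; your version is in fact the same manipulation the paper later uses in \eqref{2.4} for Proposition~\ref{Proposition 2.11}.
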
 

\begin{proof}
We take $L^{2}(\mathbb{T}^{2})$-inner products on the first component of $\eqref{KSE}$ with $u_{1}$ and compute  
\begin{align}\label{2.2b}
\frac{1}{2} \frac{d}{dt} \lVert u_{1} \rVert_{L^{2}}^{2} + \lVert \Delta u_{1} \rVert_{L^{2}}^{2} = - \int (u\cdot\nabla) u_{1} u_{1} + \lambda \lVert \nabla u_{1} \rVert_{L^{2}}^{2} 
\end{align} 
where $\lambda \lVert \nabla u_{1} \rVert_{L^{2}}^{2} \leq \frac{1}{4} \lVert \Delta u_{1} \rVert_{L^{2}}^{2} + C \lVert u_{1} \rVert_{L^{2}}^{2}$ similarly to \eqref{estimate 2}, while 
\begin{align}\label{estimate 5}
- \int (u\cdot\nabla) u_{1} u_{1} =& - \int u_{2} \partial_{2} u_{1} u_{1} \nonumber\\
\leq& \lVert u_{2} \rVert_{L^{\infty}} \lVert \nabla u_{1} \rVert_{L^{2}} \lVert u_{1} \rVert_{L^{2}} \nonumber\\
\lesssim& \lVert u_{2} \rVert_{H^{2}} \lVert u_{1} \rVert_{L^{2}}^{\frac{3}{2}} \lVert \Delta u_{1} \rVert_{L^{2}}^{\frac{1}{2}}
\leq \frac{1}{4} \lVert \Delta u_{1} \rVert_{L^{2}}^{2} + C \lVert u_{2} \rVert_{H^{2}}^{\frac{4}{3}} \lVert u_{1} \rVert_{L^{2}}^{2} 
\end{align} 
by Sobolev embedding $H^{2}(\mathbb{T}^{2}) \hookrightarrow L^{\infty}(\mathbb{T}^{2})$, \eqref{estimate 2} and Young's inequality. This implies that $u_{1} \in L_{T}^{\infty} L_{x}^{2} \cap L_{T}^{2}H_{x}^{2}$ because we know $\lVert u_{2}(t) \rVert_{H^{2}} \in L_{T}^{2}$ due to Proposition \ref{Proposition 2.3}
\end{proof} 

\begin{remark}\label{Remark 3.5}
Interestingly, the previous proof of Proposition \ref{Proposition 2.4} would not work for the 2D Burgers' equation if we neglect the fact that its solution has the maximum principle in the following way. Starting from $u_{1} \in L_{T}^{r}L_{x}^{p}$ for some $p$ and $r$, we can still deduce that $u_{2} \in L_{T}^{\infty} L_{x}^{2} \cap L_{T}^{2}H_{x}^{1}$ analogously to Proposition \ref{Proposition 2.3}. However, this does not seem to be sufficient to lead to $u_{1} \in L_{T}^{\infty} L_{x}^{2} \cap L_{T}^{2} H_{x}^{1}$ because in the estimate of 
\begin{equation*}
-\int u_{2} \partial_{2} u_{1} u_{1}
\end{equation*}  
within \eqref{estimate 5}, the diffusion for the Burgers' equation only gives $\lVert \nabla u_{1} \rVert_{L^{2}}^{2}$ so that we have no choice but to bound this by 
\begin{equation*}
\lVert u_{2} \rVert_{L^{\infty}} \lVert \partial_{2} u_{1} \rVert_{L^{2}} \lVert u_{1} \rVert_{L^{2}}. 
\end{equation*} 
However, while $\lVert \nabla u_{2} \rVert_{L^{2}} \in L_{T}^{2}$, this estimate will not go through immediately since $H^{1}(\mathbb{T}^{2}) \not\hookrightarrow L^{\infty}(\mathbb{T}^{2})$. Thus, Proposition \ref{Proposition 2.4} really is a special feature of the KSE due to the fourth-order diffusion. 
\end{remark} 

By Propositions \ref{Proposition 2.3}-\ref{Proposition 2.4} we have $u \in L_{T}^{\infty} L_{x}^{2}$ and thus we deduce that the 2D KSE is globally well-posed due to Theorem \ref{Theorem 2.1} as follows:   
\begin{theorem}\label{Theorem 2.7}
Suppose $u_{1} \in L_{T}^{r} W_{x}^{m,p}$ where $m \in [0,1)$ and 
\begin{equation*}
\begin{cases}
p \in [1, \frac{2}{m}), r \in (\frac{4}{3}, \frac{4}{1+m}] & \text{ if } m \in (0,1), \\
p \in (1, \infty], r \in [\frac{4}{3}, 4) & \text{ if } m = 0, 
\end{cases} 
\end{equation*} 
satisfy 
\begin{align*}
\frac{2}{p} + \frac{2}{r} = \frac{1}{p} + \frac{3+m}{2}. 
\end{align*} 
Then the 2D KSE is globally well-posed in $H^{1}(\mathbb{T}^{2})$. 
\end{theorem}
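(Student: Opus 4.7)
The plan is to obtain Theorem \ref{Theorem 2.7} as an immediate consequence of the chain Proposition \ref{Proposition 2.3} $\to$ Proposition \ref{Proposition 2.4} $\to$ Theorem \ref{Theorem 2.1}. The bulk of the analytic work has already been done in the two propositions; the theorem itself should amount to checking that the output of Propositions \ref{Proposition 2.3}--\ref{Proposition 2.4} lands inside the integrability range allowed by Theorem \ref{Theorem 2.1}.

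Concretely, I would first invoke Proposition \ref{Proposition 2.3} directly, since its hypothesis on $u_1$ is identical to that of Theorem \ref{Theorem 2.7}, to obtain $u_2 \in L_T^\infty L_x^2 \cap L_T^2 H_x^2$. Next, I would apply Proposition \ref{Proposition 2.4} under the same hypothesis to deduce $u_1 \in L_T^\infty L_x^2 \cap L_T^2 H_x^2$; note that this second step already relies crucially on the $L_T^2 H_x^2$ bound for $u_2$ just produced, fed into the estimate \eqref{estimate 5} through the 2D Sobolev embedding $H^2(\mathbb{T}^2) \hookrightarrow L^\infty(\mathbb{T}^2)$. Putting these together yields $u \in L_T^\infty L_x^2 \cap L_T^2 H_x^2$, and in particular $u \in L_T^2 L_x^2$ on any finite time interval $[0,T]$, with $\|u\|_{L_T^2 L_x^2} \leq \sqrt{T}\,\|u\|_{L_T^\infty L_x^2}$.

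To close the argument, I would invoke Theorem \ref{Theorem 2.1} with the parameter choice $N=2$, $m=0$, $p=2$, $r=2$. The range constraints of \eqref{range2} are met since $p=2 \in (1,\infty]$ and $r=2 \in [\tfrac43, 4)$, and the scaling identity \eqref{range3} is verified by $\tfrac{2}{2}+\tfrac{2}{2} = 2 = \tfrac{3}{2}+\tfrac{2}{4}$. Theorem \ref{Theorem 2.1} therefore upgrades the membership $u \in L_T^2 L_x^2$ into global well-posedness in $H^1(\mathbb{T}^2)$, which is precisely the conclusion of Theorem \ref{Theorem 2.7}.

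The genuine difficulty is conceptual rather than technical, and has already been absorbed into the two propositions: starting from information on only one component $u_1$, one must first bootstrap up to an estimate on $u_2$ before being able to close the loop back on $u_1$. As emphasized in Remark \ref{Remark 3.5}, the analogous strategy fails for 2D Burgers' equation, whose second-order diffusion cannot promote $u_2$ into $L_T^2 H_x^2$; it is precisely the fourth-order diffusion of the KSE that powers the Sobolev embedding step in Proposition \ref{Proposition 2.4}, and thereby makes this component-reduction result possible in the absence of any divergence-free hypothesis.
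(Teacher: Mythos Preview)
Your proposal is correct and follows exactly the paper's approach: combine Propositions \ref{Proposition 2.3} and \ref{Proposition 2.4} to obtain $u\in L_T^\infty L_x^2$, then invoke Theorem \ref{Theorem 2.1}. Your explicit choice $(N,m,p,r)=(2,0,2,2)$ makes the appeal to Theorem \ref{Theorem 2.1} slightly more precise than the paper's one-line reference, since $r=\infty$ is not directly admissible in \eqref{range2}; otherwise the arguments are identical.
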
 

\begin{remark}
As mentioned, Theorem \ref{Theorem 2.7} is very interesting because all previous component reduction results  relied on divergence-free condition (NSE, MHD system, and SQG equations). 

Another remarkable feature of Theorem \ref{Theorem 2.7} is that we obtained component reduction at no cost in terms of $\frac{3}{2} + \frac{1}{p}$ (cf. ``1'' in \eqref{Serrin criteria} and ``$\frac{5}{8}$'' in \eqref{estimate 12}). As mentioned,  reducing the Serrin criteria \eqref{Serrin criteria} to $u_{3}$ while retaining $\frac{3}{p} + \frac{2}{r} = 1$ is a well-known open problem that has caught much attention in the literature. 
\end{remark}

We are able to extend our result to the 3D case as follows:
\begin{proposition}\label{new proposition 1}
Suppose $u_{1}, u_{2} \in L_{T}^{r} W_{x}^{m,p}$ where $m \in [0, 1)$ and 
\begin{equation*}
\begin{cases}
p \in [\frac{3}{m+2}, \frac{3}{m}), r \in (\frac{4}{3}, 4] & \text{ if } m \in (0,1), \\
p \in [\frac{3}{2},\infty], r \in [\frac{4}{3}, 4] & \text{ if } m = 0, 
\end{cases} 
\end{equation*}
satisfy 
\begin{align*}
\frac{3}{p} + \frac{2}{r} = \frac{3}{2p} + \frac{3+m}{2}. 
\end{align*}
Then $u_{3} \in L_{T}^{\infty} L_{x}^{2} \cap L_{T}^{2} H_{x}^{2}$. 
\end{proposition}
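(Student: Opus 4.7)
The plan is to mimic the proof of Proposition \ref{Proposition 2.3}, but now applied to the third component of the 3D KSE and exploiting both hypotheses on $u_1$ and $u_2$. I would take the $L^{2}(\mathbb{T}^{3})$-inner product of the third component of \eqref{KSE} with $u_{3}$, obtaining
\begin{equation*}
\tfrac{1}{2} \tfrac{d}{dt} \lVert u_{3} \rVert_{L^{2}}^{2} + \lVert \Delta u_{3} \rVert_{L^{2}}^{2} = -\int (u\cdot\nabla) u_{3}\, u_{3} + \lambda \lVert \nabla u_{3} \rVert_{L^{2}}^{2}.
\end{equation*}
The linear term is absorbed exactly as in \eqref{estimate 2} via the interpolation $\lVert \nabla u_{3} \rVert_{L^{2}} \leq \lVert u_{3} \rVert_{L^{2}}^{1/2} \lVert \Delta u_{3} \rVert_{L^{2}}^{1/2}$ followed by Young's inequality.

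The structural observation that drives everything is that the nonlinear term splits as
\begin{equation*}
-\int (u\cdot\nabla) u_{3}\, u_{3} = -\int u_{1} (\partial_{1} u_{3}) u_{3} - \int u_{2} (\partial_{2} u_{3}) u_{3} - \tfrac{1}{3}\int \partial_{3}(u_{3})^{3},
\end{equation*}
and the last term vanishes by periodicity, exactly paralleling the elimination of $\int u_2 \partial_2 u_2\, u_2$ in Proposition \ref{Proposition 2.3}. This is precisely the reason the hypothesis is imposed on $u_1$ and $u_2$ alone: the ``diagonal'' piece is self-annihilating.

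For each of the two remaining integrals, I would apply H\"older with exponents $\frac{3p}{3-mp},\ \frac{6p}{3p+mp-3},\ \frac{6p}{3p+mp-3}$, use the Sobolev embedding $W^{m,p}(\mathbb{T}^{3}) \hookrightarrow L^{\frac{3p}{3-mp}}(\mathbb{T}^{3})$ (valid since $p < 3/m$ when $m \in (0,1)$), and then invoke \eqref{GN1}--\eqref{GN2} with $N = 3$ to produce
\begin{equation*}
\left| \int u_{j} (\partial_{j} u_{3}) u_{3} \right| \lesssim \lVert u_{j} \rVert_{W^{m,p}} \lVert u_{3} \rVert_{L^{2}}^{\frac{(3+m)p - 3}{2p}} \lVert u_{3} \rVert_{H^{2}}^{\frac{(1-m)p + 3}{2p}}, \quad j = 1, 2.
\end{equation*}
The exponent of $\lVert u_{3} \rVert_{H^{2}}$ is strictly less than $2$ precisely because $p \geq \frac{3}{m+2} > \frac{3}{m+3}$, so Young's inequality produces
\begin{equation*}
\left| \int u_{j} (\partial_{j} u_{3}) u_{3} \right| \leq \tfrac{1}{8}\lVert \Delta u_{3} \rVert_{L^{2}}^{2} + C\Bigl(\lVert u_{j} \rVert_{W^{m,p}}^{\frac{4p}{(3+m)p-3}} + \lVert u_{j} \rVert_{W^{m,p}}\Bigr) \lVert u_{3} \rVert_{L^{2}}^{2}.
\end{equation*}

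The scaling identity in the hypothesis is what makes the whole thing close: $\frac{3}{p} + \frac{2}{r} = \frac{3}{2p} + \frac{3+m}{2}$ rearranges to $r = \frac{4p}{(3+m)p - 3}$, so the exponent produced by Young's inequality matches the time-integrability exponent $r$ exactly. Combining everything yields
\begin{equation*}
\tfrac{d}{dt} \lVert u_{3} \rVert_{L^{2}}^{2} + \lVert \Delta u_{3} \rVert_{L^{2}}^{2} \leq C\bigl(1 + \lVert u_{1} \rVert_{W^{m,p}}^{r} + \lVert u_{2} \rVert_{W^{m,p}}^{r}\bigr) \lVert u_{3} \rVert_{L^{2}}^{2},
\end{equation*}
and Gr\"onwall delivers $u_{3} \in L^{\infty}_{T} L^{2}_{x} \cap L^{2}_{T} H^{2}_{x}$. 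I expect no serious obstacle beyond bookkeeping: the main conceptual point (vanishing of the diagonal cubic term) is immediate, and the remaining work is verifying that the admissible ranges of $(m, p, r)$ in the statement are precisely those for which Sobolev embedding, the Gagliardo-Nirenberg inequalities \eqref{GN1}--\eqref{GN2}, and the Young's inequality step all simultaneously apply; this essentially retraces the arithmetic already performed in Theorem \ref{Theorem 2.1} and Proposition \ref{Proposition 2.3} with $N = 3$.
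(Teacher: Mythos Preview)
Your proposal is correct and follows essentially the same approach as the paper: the same $L^{2}$-energy identity on $u_{3}$, the same cancellation of the diagonal cubic term $\int u_{3}\partial_{3}u_{3}\,u_{3}$, the same H\"older/Sobolev/Gagliardo--Nirenberg chain with identical exponents, and the same closure via Young's inequality and Gr\"onwall. The only cosmetic difference is that the paper keeps both $\lVert u_{j}\rVert_{W^{m,p}}$ and $\lVert u_{j}\rVert_{W^{m,p}}^{\frac{4p}{(3+m)p-3}}$ in the final differential inequality, whereas you absorb the first into the second (plus a constant) before applying Gr\"onwall.
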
 

\begin{proof}
We consider the third component of \eqref{KSE}, take $L^{2}(\mathbb{T}^{3})$ inner-products with $u_{3}$ to obtain 
\begin{equation}\label{estimate 24} 
\frac{1}{2} \frac{d}{dt} \lVert u_{3} \rVert_{L^{2}}^{2} + \lVert \Delta u_{3} \rVert_{L^{2}}^{2} = - \int (u\cdot\nabla) u_{3} u_{3} + \lambda \lVert \nabla u_{3} \rVert_{L^{2}}^{2} 
\end{equation} 
where $\lambda \lVert \nabla u_{3} \rVert_{L^{2}}^{2} \leq \frac{1}{4} \lVert \Delta u_{3} \rVert_{L^{2}}^{2} + C \lVert u_{3} \rVert_{L^{2}}^{2}$ similarly to \eqref{estimate 2}, while 
\begin{align}
& - \int (u\cdot\nabla) u_{3} u_{3} \nonumber \\
=& - \int u_{1} \partial_{1} u_{3} u_{3} + u_{2} \partial_{2} u_{3} u_{3} \nonumber \\
\lesssim& ( \lVert u_{1} \rVert_{L^{\frac{3p}{3-mp}}} +\lVert u_{2} \rVert_{L^{\frac{3p}{3-mp}}}) \lVert u_{3} \rVert_{L^{\frac{6p}{3p + mp -3}}} \lVert \nabla u_{3} \rVert_{L^{\frac{6p}{3p + mp - 3}}} \nonumber \\
\lesssim&  (\lVert u_{1} \rVert_{W^{m,p}} + \lVert u_{2} \rVert_{W^{m,p}}) \lVert u_{3} \rVert_{L^{2}}^{\frac{ (3+m) p -3}{2p}} \lVert u_{3} \rVert_{H^{2}}^{\frac{ (1-m) p + 3}{2p}}  \nonumber \\
\leq& \frac{1}{4} \lVert\Delta u_{3} \rVert_{L^{2}}^{2} + C(\sum_{k=1}^{2} \lVert u_{k} \rVert_{W^{m,p}}^{\frac{4p}{(3+m) p -3}} + \lVert u_{k} \rVert_{W^{m,p}} ) \lVert u_{3} \rVert_{L^{2}}^{2} \label{estimate 25} 
\end{align}
by H$\ddot{\mathrm{o}}$lder's inequality, Sobolev embedding $L^{\frac{3p}{3-mp}} (\mathbb{T}^{3}) \hookrightarrow W^{m,p} (\mathbb{T}^{3})$ if $m > 0$, \eqref{GN1}-\eqref{GN2}, and Young's inequality. Applying these estimates to \eqref{estimate 24}, Gr\"onwall's inequality completes the proof. 
\end{proof}

\begin{proposition}\label{new proposition 2}
Suppose $u_{1}, u_{2} \in L_{T}^{r} W_{x}^{m,p}$ where $m \in [0, 1)$ and 
\begin{equation*}
\begin{cases}
p \in [\frac{3}{m+2}, \frac{3}{m}), r \in (\frac{4}{3}, 4] & \text{ if } m \in (0,1), \\
p \in [\frac{3}{2},\infty], r \in [\frac{4}{3}, 4] & \text{ if } m = 0, 
\end{cases} 
\end{equation*}
satisfy 
\begin{align*}
\frac{3}{p} + \frac{2}{r} = \frac{3}{2p} + \frac{3+m}{2}. 
\end{align*}
Then $u_{1}, u_{2} \in L_{T}^{\infty} L_{x}^{2} \cap L_{T}^{2} H_{x}^{2}$. \end{proposition}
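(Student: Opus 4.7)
The plan is to mirror the cascade used in Proposition \ref{Proposition 2.4}, but accounting for the extra transport direction from $u_3$ that now appears in 3D. Since Proposition \ref{new proposition 1} already gives us $u_3 \in L_T^\infty L_x^2 \cap L_T^2 H_x^2$ under the stated hypotheses, the goal becomes controlling $u_1$ (and, by symmetry, $u_2$) using $u_3$ as a known quantity together with the assumed integrability of $u_2$ (respectively $u_1$).

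First, I would take the $L^2(\mathbb{T}^3)$-inner product of the first component of \eqref{KSE} with $u_1$ to get
\begin{equation*}
\tfrac{1}{2}\tfrac{d}{dt}\lVert u_1\rVert_{L^2}^2 + \lVert \Delta u_1\rVert_{L^2}^2
= -\int (u\cdot\nabla) u_1\, u_1 + \lambda\lVert \nabla u_1\rVert_{L^2}^2,
\end{equation*}
where the linear term is absorbed as in \eqref{estimate 2}. The diagonal piece $-\int u_1\partial_1 u_1\,u_1 = -\tfrac{1}{3}\int \partial_1(u_1)^3 = 0$ vanishes, so the nonlinear contribution reduces to $-\int u_2\partial_2 u_1\,u_1 - \int u_3\partial_3 u_1\,u_1$.

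Next I would estimate the two surviving transport terms separately. For the term involving $u_2$, I would repeat verbatim the H\"older + Sobolev embedding $W^{m,p}(\mathbb{T}^3)\hookrightarrow L^{\frac{3p}{3-mp}}(\mathbb{T}^3)$ + Gagliardo--Nirenberg \eqref{GN1}--\eqref{GN2} + Young chain used in \eqref{estimate 25}, producing a bound of the form
\begin{equation*}
-\int u_2\partial_2 u_1\, u_1
\leq \tfrac{1}{8}\lVert \Delta u_1\rVert_{L^2}^2
+ C\bigl(\lVert u_2\rVert_{W^{m,p}}^{\frac{4p}{(3+m)p-3}} + \lVert u_2\rVert_{W^{m,p}}\bigr)\lVert u_1\rVert_{L^2}^2.
\end{equation*}
For the term involving $u_3$, since we already know $u_3 \in L_T^2 H_x^2$, I would use $H^2(\mathbb{T}^3)\hookrightarrow L^\infty(\mathbb{T}^3)$ and interpolation $\lVert \nabla u_1\rVert_{L^2}\lesssim \lVert u_1\rVert_{L^2}^{1/2}\lVert \Delta u_1\rVert_{L^2}^{1/2}$ to bound
\begin{equation*}
-\int u_3\partial_3 u_1\, u_1
\leq \lVert u_3\rVert_{L^\infty}\lVert \nabla u_1\rVert_{L^2}\lVert u_1\rVert_{L^2}
\leq \tfrac{1}{8}\lVert \Delta u_1\rVert_{L^2}^2 + C\lVert u_3\rVert_{H^2}^{4/3}\lVert u_1\rVert_{L^2}^2.
\end{equation*}

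Collecting everything, I get a differential inequality
\begin{equation*}
\tfrac{d}{dt}\lVert u_1\rVert_{L^2}^2 + \lVert \Delta u_1\rVert_{L^2}^2
\lesssim \bigl(1 + \lVert u_2\rVert_{W^{m,p}}^{\frac{4p}{(3+m)p-3}} + \lVert u_2\rVert_{W^{m,p}} + \lVert u_3\rVert_{H^2}^{4/3}\bigr)\lVert u_1\rVert_{L^2}^2,
\end{equation*}
whose coefficient is in $L_T^1$ (the $u_2$ contributions by the scaling relation $\frac{3}{p}+\frac{2}{r} = \frac{3}{2p}+\frac{3+m}{2}$ as in \eqref{estimate 25}, and $\lVert u_3\rVert_{H^2}^{4/3}\in L_T^{3/2}\subset L_T^1$ from Proposition \ref{new proposition 1}). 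Gr\"onwall's inequality then yields $u_1\in L_T^\infty L_x^2\cap L_T^2 H_x^2$, and the identical argument swapping the roles of $u_1$ and $u_2$ handles $u_2$. The main delicacy is simply that the $u_3$-transport term, which has no counterpart in the 2D setting of Proposition \ref{Proposition 2.4}, be absorbed using only the $L_T^2 H_x^2$ control obtained in the previous step; this is precisely the place where the fourth-order diffusion (providing $\lVert \Delta u_1\rVert_{L^2}^2$) rescues what in Burgers' equation would be hopeless, echoing Remark \ref{Remark 3.5}.
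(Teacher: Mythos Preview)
Your proof is correct and follows the same overall architecture as the paper's: take the $L^2$ energy identity for $u_j$ ($j\in\{1,2\}$), drop the diagonal piece, bound the two surviving transport terms using the hypothesis on the other component $u_k$ and Proposition~\ref{new proposition 1} for $u_3$, then close with Gr\"onwall. The only difference is in the H\"older splittings chosen. You handle the $u_k$-term by recycling the $(p,r)$-adapted chain of \eqref{estimate 25} and the $u_3$-term via $H^2(\mathbb{T}^3)\hookrightarrow L^\infty(\mathbb{T}^3)$ exactly as in Proposition~\ref{Proposition 2.4}; the paper instead treats both terms with a single $L^3\times L^6\times L^2$ splitting, interpolating $\lVert u_3\rVert_{L^3}$ between $L^2$ and $H^2$ and invoking $u_k\in L_T^2 L_x^3$. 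Your version has the minor advantage that the $u_k$ bound consumes the hypothesis $u_k\in L_T^r W_x^{m,p}$ directly at the given $(p,r)$, whereas the paper's route requires the intermediate check $\int_0^T\lVert u_k\rVert_{L^3}^2\,d\tau<\infty$, which is only immediate at the specific point $(p,r)=\bigl(\tfrac{3}{m+1},2\bigr)$ on the scaling curve.
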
 

\begin{proof}
We fix $j \in \{1,2\}$ arbitrarily and take $L^{2}(\mathbb{T}^{3})$ inner product on the $j$-th component of \eqref{KSE} with $u_{j}$ to obtain 
\begin{equation}\label{estimate 26} 
\frac{1}{2} \frac{d}{dt} \lVert u_{j} \rVert_{L^{2}}^{2} + \lVert \Delta u_{j} \rVert_{L^{2}}^{2} = - \int (u\cdot\nabla) u_{j} u_{j} + \lambda \lVert \nabla u_{j} \rVert_{L^{2}}^{2}
\end{equation} 
where $\lambda \lVert \nabla u_{3} \rVert_{L^{2}}^{2} \leq \frac{1}{4} \lVert \Delta u_{3} \rVert_{L^{2}}^{2} + C \lVert u_{3} \rVert_{L^{2}}^{2}$ similarly to \eqref{estimate 2}, while 
\begin{align*}
- \int (u\cdot\nabla) u_{j} u_{j} = - \sum_{k=1}^{3} \int u_{k} \partial_{k} u_{j} u_{j}
\end{align*}
where $\int u_{j} \partial_{j} u_{j} u_{j} = 0$ so that for $k \in \{1,2\} \setminus \{j\}$, 
\begin{align}
 - \int (u\cdot\nabla) u_{j} u_{j} =& - \int u_{k} \partial_{k} u_{j} u_{j} + u_{3} \partial_{3} u_{j} u_{j} \nonumber \\
\lesssim& ( \lVert u_{k} \rVert_{L^{3}} + \lVert u_{3} \rVert_{L^{3}}) \lVert \nabla u_{j} \rVert_{L^{6}} \lVert u_{j} \rVert_{L^{2}} \nonumber \\
\leq& \frac{1}{4} \lVert \Delta u_{j} \rVert_{L^{2}}^{2} + C( 1+ \lVert u_{k} \rVert_{L^{3}}^{2} + \lVert u_{3} \rVert_{L^{2}}^{\frac{3}{2}} \lVert u_{3} \rVert_{H^{2}}^{\frac{1}{2}}) \lVert u_{j} \rVert_{L^{2}}^{2} \label{estimate 27} 
\end{align}
by H$\ddot{\mathrm{o}}$lder's inequality, Sobolev embedding $H^{1}(\mathbb{T}^{3}) \hookrightarrow L^{6} (\mathbb{T}^{3})$, Gagliardo-Nirenberg inequality, and Young's inequality. We apply these estimates to \eqref{estimate 26} and because 
\begin{align*}
& \int_{0}^{T} \lVert u_{k} \rVert_{L^{3}}^{2} \lesssim \int_{0}^{T} \lVert u_{k} \rVert_{W^{m,\frac{3}{m+1}}}^{2} d\tau < \infty, \\
&\int_{0}^{T} \lVert u_{3} \rVert_{L^{2}}^{\frac{3}{2}} \lVert u_{3} \rVert_{H^{2}}^{\frac{1}{2}} d\tau \leq \lVert u_{3} \rVert_{L_{T}^{\infty} L_{x}^{2}}^{\frac{3}{2}} \int_{0}^{T} \lVert u_{3} \rVert_{H^{2}}^{\frac{1}{2}}d\tau  < \infty
\end{align*}
due to Sobolev embedding $W^{m, \frac{3}{m+1}}(\mathbb{T}^{3}) \hookrightarrow L^{3}(\mathbb{T}^{3})$ if $m \in (0,1)$, hypothesis, and Proposition \ref{new proposition 1}, Gr\"onwall's inequality completes the proof. 
\end{proof}

Due to Propositions \ref{new proposition 1}-\ref{new proposition 2}, we see that $u \in L_{T}^{\infty} L_{x}^{2}$ and thus Theorem \ref{Theorem 2.1} again leads to the following theorem. 

\begin{theorem}\label{new_theorem}
Suppose $u_{1}, u_{2} \in L_{T}^{r} W_{x}^{m,p}$ where $m \in [0, 1)$ and 
\begin{equation*}
\begin{cases}
p \in [\frac{3}{m+2}, \frac{3}{m}), r \in (\frac{4}{3}, 4] & \text{ if } m \in (0,1), \\
p \in [\frac{3}{2},\infty], r \in [\frac{4}{3}, 4] & \text{ if } m = 0, 
\end{cases} 
\end{equation*}
satisfy 
\begin{align*}
\frac{3}{p} + \frac{2}{r} = \frac{3}{2p} + \frac{3+m}{2}. 
\end{align*}
Then the 3D KSE is globally well-posed in $H^{1}(\mathbb{T}^{3})$. \end{theorem}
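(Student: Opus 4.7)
The statement is structured as a corollary of Propositions \ref{new proposition 1} and \ref{new proposition 2} together with Theorem \ref{Theorem 2.1}. Since the hypothesis on $(u_1, u_2)$ is word-for-word the hypothesis of both preceding propositions, my plan is simply to chain these three results in the correct order. First I invoke Proposition \ref{new proposition 1} to extract $u_{3} \in L_{T}^{\infty} L_{x}^{2} \cap L_{T}^{2} H_{x}^{2}$. Then I invoke Proposition \ref{new proposition 2} to promote $u_{1}, u_{2}$ into the same regularity class. The order matters: the proof of Proposition \ref{new proposition 2} already uses the $L_{T}^{\infty} L_{x}^{2}$ and $L_{T}^{2} H_{x}^{2}$ control of $u_{3}$ supplied by Proposition \ref{new proposition 1} to dispose of the $\lVert u_{3}\rVert_{L^{2}}^{3/2}\lVert u_{3}\rVert_{H^{2}}^{1/2}$ term in the Gr\"onwall loop.

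Combining the three component statements produces $u \in L_{T}^{\infty} L_{x}^{2} \cap L_{T}^{2} H_{x}^{2}$. By the Sobolev embedding $H^{2}(\mathbb{T}^{3}) \hookrightarrow L^{\infty}(\mathbb{T}^{3})$ this improves to $u \in L_{T}^{2} L_{x}^{\infty}$, and because $T$ is finite, H$\ddot{\mathrm{o}}$lder's inequality in time gives the slightly weaker inclusion $u \in L_{T}^{4/3} L_{x}^{\infty}$.

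Finally I apply Theorem \ref{Theorem 2.1} with $N = 3$, $m = 0$, $p = \infty$, $r = 4/3$. The admissible range \eqref{range2} contains this choice (both endpoints are included), and the scaling identity \eqref{range3} reduces to $0 + 3/2 = 3/2 + 0$, which trivially holds. Theorem \ref{Theorem 2.1} then supplies global well-posedness of the 3D KSE in $H^{1}(\mathbb{T}^{3})$, which is the claim. There is essentially no obstacle in this plan, since all substantive analysis has already been packaged inside the two component propositions; the only points requiring care are the ordering of the propositions and the verification that the Sobolev-and-time-interpolation chain actually lands on an exponent pair that is admissible in Theorem \ref{Theorem 2.1}.
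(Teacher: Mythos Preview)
Your proposal is correct and follows the paper's approach: chain Propositions \ref{new proposition 1} and \ref{new proposition 2} (in that order) to obtain uniform-in-time integrability of the full vector $u$, then feed this into Theorem \ref{Theorem 2.1}. The only cosmetic difference is that the paper invokes Theorem \ref{Theorem 2.1} directly from the bound $u \in L_{T}^{\infty} L_{x}^{2}$ (which, for $N=3$ and $m=0$, corresponds to the admissible pair $p=2$, $r=8/3$ after trivially lowering the time exponent on a finite interval), whereas you route through $L_{T}^{2}H_{x}^{2}\hookrightarrow L_{T}^{2}L_{x}^{\infty}\hookrightarrow L_{T}^{4/3}L_{x}^{\infty}$ to hit the pair $p=\infty$, $r=4/3$; both land in the admissible range \eqref{range2}.
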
 

\begin{remark}
We leave it as an interesting problem to attain a regularity criterion for the 3D KSE in terms of one component instead of two components as we did in Theorem \ref{new_theorem}.
\end{remark}

\subsection{\texorpdfstring{Result on $\nabla u$}{}}\label{Subsection 3.3}
We aim to obtain a criterion in terms of $\lVert \nabla u \rVert_{L^{p}}$, motivated the case of the NSE in \eqref{Beirao criteria}. We can start from \eqref{1.6} and rely on \eqref{1.11}. For the nonlinear term we can write 
\begin{align}
\int (u\cdot\nabla) u \cdot \Delta u %\nonumber \\
= - \sum_{i,j,k}\int\pnt{  \partial_{k} u_{i} \partial_{i} u_{j} \partial_{k} u_{j} - \frac{1}{2} \partial_{i} u_{i} (\partial_{k} u_{j})^{2} }
\lesssim \int \lvert \nabla u \rvert^{3}. \label{estimate 9}
\end{align} 
Now let us point out again that if we follow the standard approach of the NSE, then we would compute for $p \in [2, \infty]$, 
\begin{align*}
C\int \lvert \nabla u \rvert^{3} \lesssim& \lVert \nabla u \rVert_{L^{p}} \lVert \nabla u \rVert_{L^{2}} \lVert \nabla u \rVert_{L^{\frac{2p}{p-2}}} \nonumber\\
\lesssim&  \lVert \nabla u \rVert_{L^{p}} \lVert \nabla u \rVert_{L^{2}} \lVert \nabla u \rVert_{L^{2}}^{\frac{2p-N}{2p}} \lVert \nabla u \rVert_{H^{2}}^{\frac{N}{2p}} \nonumber \\
\leq& \frac{1}{4} \lVert \Delta \nabla u \rVert_{L^{2}}^{2} + C (\lVert \nabla u \rVert_{L^{p}}^{\frac{4p}{4p-N}} + \lVert \nabla u \rVert_{L^{p}}) \lVert \nabla u\rVert_{L^{2}}^{2} 
\end{align*} 
by H$\ddot{\mathrm{o}}$lder's, Gagliardo-Nirenberg, and Young's inequalities. This only leads to a criterion of $\nabla u \in L_{T}^{r}L_{x}^{p}$ where $\frac{N}{p} + \frac{2}{r} = 2 + \frac{N}{2p}$, $p \in [2,\infty], r \in [1, \frac{8}{8-N}]$. In fact, we approach this estimate differently from the NSE as follows in both cases $N \in\{ 2, 3\}$: 
\begin{align}
C \int \lvert \nabla u \rvert^{3} \lesssim&  \lVert \nabla u \rVert_{L^{p}} \lVert \nabla u \rVert_{L^{\frac{2p}{p-1}}}^{2}  \nonumber \\
\lesssim&  \lVert \nabla u \rVert_{L^{p}} \lVert \nabla u \rVert_{L^{2}}^{\frac{4p-N}{2p}} \lVert \nabla u \rVert_{H^{2}}^{\frac{N}{2p}}  \nonumber \\
\leq& \tfrac{1}{4} \lVert \Delta \nabla u \rVert_{L^{2}}^{2} + C (\lVert \nabla u \rVert_{L^{p}}^{\frac{4p}{4p-N}} + \lVert \nabla u \rVert_{L^{p}}) \lVert \nabla u \rVert_{L^{2}}^{2} \label{2.3a}
\end{align} 
by H$\ddot{\mathrm{o}}$lder's inequality, \eqref{GN1} and Young's inequality. We also continue from \eqref{star}
and estimate the nonlinear term as
\begin{align}
-\int (u\cdot\nabla) u \cdot u \leq& \lVert \nabla u \rVert_{L^{p}} \lVert u \rVert_{L^{\frac{2p}{p-1}}}^{2} \lesssim \lVert \nabla u \rVert_{L^{p}} \lVert u \rVert_{L^{2}}^{\frac{4p-N}{2p}} \lVert u\rVert_{H^{2}}^{\frac{N}{2p}} \nonumber \\
\leq& \frac{1}{8} \lVert \Delta u \rVert_{L^{2}}^{2} + C( \lVert \nabla u \rVert_{L^{p}}^{\frac{4p}{4p-N}} + \lVert \nabla u \rVert_{L^{p}}) \lVert u \rVert_{L^{2}}^{2} \label{estimate 8}
\end{align} 
by H$\ddot{\mathrm{o}}$lder's inequality \eqref{GN1} and Young's inequality. Considering \eqref{estimate 9}, \eqref{2.3a} and \eqref{estimate 8} in \eqref{1.6} and \eqref{star} gives us the following result. 

\begin{theorem}\label{Theorem 2.9}
Suppose $\nabla u \in L_{T}^{r}L_{x}^{p}$ where $p \in [1,\infty], r \in [1, \frac{4}{4-N}]$ satisfy 
\begin{equation*}
\frac{N}{p} + \frac{2}{r} = 2 + \frac{N}{2p}. 
\end{equation*}
Then the $N$D KSE is globally well-posed in $H^{1}(\mathbb{T}^{N})$. 
\end{theorem}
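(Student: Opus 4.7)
The plan is to derive a closed $H^1$ differential inequality by adding the $L^2$-evolution \eqref{star} to the $\dot H^1$-evolution \eqref{1.6}, and then to invoke Grönwall's inequality together with the blow-up/continuation alternative following from the local well-posedness of Theorem \ref{Theorem 2.2}. The linear contributions $\lambda\|\Delta u\|_{L^2}^2$ and $\lambda\|\nabla u\|_{L^2}^2$ are already controlled by the interpolation in \eqref{1.11} (and its $L^2$-analog), producing absorbable fractions of $\|\Delta\nabla u\|_{L^2}^2$ and $\|\Delta u\|_{L^2}^2$ together with lower-order multiples of $\|u\|_{H^1}^2$, so the entire argument reduces to controlling the two nonlinear terms.

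For the $\dot H^1$ estimate, I would first integrate by parts (no divergence-free hypothesis is used) to obtain the pointwise bound $\int (u\cdot\nabla)u\cdot\Delta u \lesssim \int|\nabla u|^3$, exactly as in \eqref{estimate 9}. The crucial step---which I view as the main technical novelty---is the nonstandard H\"older split in \eqref{2.3a}: rather than the NSE-style factorization $\|\nabla u\|_{L^p}\|\nabla u\|_{L^2}\|\nabla u\|_{L^{2p/(p-2)}}$ (which forces $p\geq 2$ and loses absorbability), one writes $\int|\nabla u|^3 \leq \|\nabla u\|_{L^p}\|\nabla u\|_{L^{2p/(p-1)}}^{2}$. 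The Gagliardo--Nirenberg inequality \eqref{GN1} then bounds the squared factor by $\|\nabla u\|_{L^2}^{(4p-N)/2p}\|\nabla u\|_{H^2}^{N/2p}$, and because the exponent $N/2p$ is strictly less than $2$ for every admissible $p\geq 1$, Young's inequality absorbs the top-order piece into the fourth-order dissipation $\|\Delta\nabla u\|_{L^2}^2$. This absorption is unavailable to the NSE, and it is what permits the full range $p\in[1,\infty]$.

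For the $L^2$ piece, the nonlinearity $-\int(u\cdot\nabla)u\cdot u$ is handled by the entirely parallel split \eqref{estimate 8}, producing the same coefficient $\|\nabla u\|_{L^p}^{4p/(4p-N)} + \|\nabla u\|_{L^p}$ multiplying $\|u\|_{L^2}^2$. Summing both inequalities and absorbing all top-order terms yields
$$
\frac{d}{dt}\|u\|_{H^1}^2 + \|u\|_{H^3}^2 \lesssim \Bigl(\|\nabla u\|_{L^p}^{\frac{4p}{4p-N}} + \|\nabla u\|_{L^p} + 1\Bigr)\|u\|_{H^1}^2.
$$
A direct computation shows that the scaling identity $\frac{N}{p}+\frac{2}{r}=2+\frac{N}{2p}$ is equivalent to $r=\frac{4p}{4p-N}$, so the hypothesis $\nabla u\in L^r_T L^p_x$ places the bracketed coefficient in $L^1_T$. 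Grönwall's lemma then delivers $u\in L^\infty_T H^1_x\cap L^2_T H^3_x$, which rules out $H^1$-blow-up and, together with Theorem \ref{Theorem 2.2}, extends the solution globally. Finally, the stated range $r\in[1,\tfrac{4}{4-N}]$ is simply the image of $p\in[1,\infty]$ under the map $p\mapsto\frac{4p}{4p-N}$, so no additional hypothesis is needed.
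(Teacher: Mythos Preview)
Your proposal is correct and follows essentially the same approach as the paper: you combine the $L^2$-estimate \eqref{star} with the $\dot H^1$-estimate \eqref{1.6}, handle the linear terms via \eqref{1.11}, and control the two nonlinear contributions using precisely the H\"older/Gagliardo--Nirenberg/Young chain recorded in \eqref{estimate 9}, \eqref{2.3a}, and \eqref{estimate 8}. Your identification of the split $\int|\nabla u|^3\le \|\nabla u\|_{L^p}\|\nabla u\|_{L^{2p/(p-1)}}^2$ as the key technical step, and your verification that the scaling relation forces $r=\tfrac{4p}{4p-N}$, match the paper exactly.
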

Taking $p = 1$ turns $2+ \frac{N}{2p}$ to $2 + \frac{N}{2}$. We emphasize again that $2 + \frac{N}{2}$ is remarkably larger than 2 in \eqref{Beirao criteria}.  
\subsection{\texorpdfstring{Results on $\nabla\cdot u$, and $\partial_{2}u_{2}$}{}}\label{Subsection 3.4}
Next, we aim to obtain a result on divergence $\nabla\cdot u$, which is an improvement of $\nabla u$, and in fact, even $\partial_{2}u_{2}$ at the optimal level. 

If we decide to work on the $H^{1}(\mathbb{T}^{N})$-estimate, then we can look at \eqref{1.6} again 
\begin{align*}
\frac{1}{2} \frac{d}{dt} \lVert \nabla u \rVert_{L^{2}}^{2} + \lVert \Delta \nabla u \rVert_{L^{2}}^{2} = \int (u\cdot\nabla) u \cdot \Delta u + \lambda \lVert \Delta u \rVert_{L^{2}}^{2}. 
\end{align*} 
Now if we integrate by parts, we can get 
\begin{equation*}
\int (u\cdot\nabla) u \cdot \Delta u = - \sum_{i,j,k = 1}^{N} \int \pnt{(\partial_{i}u_{i}) u_{j} \partial_{k}^{2} u_{j} + u_{i}u_{j}\partial_{i}\partial_{k}^{2}u_{j}}
\end{equation*} 
or 
\begin{equation*}
\int (u\cdot\nabla) u \cdot \Delta u = - \sum_{i,j,k = 1}^{N}\int \pnt{\partial_{k}u_{i}\partial_{i}u_{j}\partial_{k}u_{j} - \frac{1}{2} \partial_{i}u_{i} \lvert \partial_{k}u_{j} \rvert^{2}}.
\end{equation*} 
Although we have separated the divergence $\nabla\cdot u$ in $(\partial_{i}u_{i}) u_{j} \partial_{k}^{2} u_{j}$ or $\frac{1}{2} \partial_{i} u_{i} \lvert \partial_{k} u_{j} \rvert^{2}$, this will require regularity criteria of an additional term besides the divergence due to $u_{i}u_{j} \partial_{i} \partial_{k}^{2} u_{j}$ or $\partial_{k} u_{i} \partial_{i} u_{j} \partial_{k} u_{j}$. 

In fact, thanks to Theorem \ref{Theorem 2.1} we can work in $L^{2}(\mathbb{T}^{N})$-norm and avoid this issue. Indeed, we can restart from \eqref{star} where we can estimate $\lambda \lVert \nabla u \rVert_{L^{2}}^{2} \leq \frac{1}{4} \lVert \Delta u \rVert_{L^{2}}^{2} + C \lVert u \rVert_{L^{2}}^{2}$ identically to \eqref{estimate 2}, while  
\begin{equation*}
-\int (u\cdot\nabla) u \cdot u = - \int (u\cdot\nabla) \frac{1}{2} \lvert u \rvert^{2} = \frac{1}{2} \int (\nabla\cdot u ) \lvert u \rvert^{2}
\end{equation*} 
(note that this term vanishes for the NSE but not for the $N$D KSE when $N>1$). Now if we estimate 
\begin{equation*}
\int (\nabla\cdot u) \lvert u \rvert^{2} \leq \lVert \nabla\cdot u \rVert_{L^{\infty}} \lVert u \rVert_{L^{2}}^{2}, 
\end{equation*} 
we can get a criterion in terms of $\int_{0}^{T} \lVert \nabla \cdot u \rVert_{L^{\infty}} d \tau$ analogously to the 3D Euler equations (\cite{Beale_Kato_Majda_1984}). But to make full use of the fourth-order diffusion, we compute similarly to \eqref{2.3a}
\begin{align}
 \frac{1}{2} \int (\nabla\cdot u) \lvert u \rvert^{2} \leq& \frac{1}{2} \lVert \nabla \cdot u \rVert_{L^{p}} \lVert u \rVert_{L^{\frac{2p}{p-1}}}^{2} \lesssim \lVert \nabla \cdot u \rVert_{L^{p}} \lVert u \rVert_{L^{2}}^{\frac{4p-N}{2p}} \lVert u \rVert_{H^{2}}^{\frac{N}{2p}} \nonumber \\
\leq& \frac{1}{4} \lVert \Delta u \rVert_{L^{2}}^{2} + C (\lVert \nabla \cdot u \rVert_{L^{p}}^{\frac{4p}{4p-N}} + \lVert \nabla\cdot u \rVert_{L^{p}}) \lVert u \rVert_{L^{2}}^{2} \label{2.3b}
\end{align} 
by H$\ddot{\mathrm{o}}$lder's inequality, \eqref{GN1}, and Young's inequality. Because Theorem \ref{Theorem 2.1} implies that $L_{T}^{\infty} L_{x}^{2}$-bound leads immediately to the global well-posedness of the $N$D KSE, we obtain the following result. 

\begin{theorem}\label{Theorem 3.10}
Suppose $\nabla\cdot u \in L_{T}^{r}L_{x}^{p}$ where $p \in [1,\infty], r \in [1, \frac{4}{4-N}]$ satisfy 
\begin{equation*}
\frac{N}{p} + \frac{2}{r} = 2 + \frac{N}{2p}.    
\end{equation*}
Then the $N$D KSE is globally well-posed in $H^{1}(\mathbb{T}^{N})$. 
\end{theorem}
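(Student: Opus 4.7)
\textbf{Proof plan for Theorem \ref{Theorem 3.10}.} The strategy is to reduce the theorem to the $L^2$ case: by Theorem \ref{Theorem 2.1}, once we establish the a priori bound $u \in L_T^\infty L_x^2$, global well-posedness in $H^1(\mathbb{T}^N)$ follows. Indeed, since $\mathbb{T}^N$ has finite measure, $L^2(\mathbb{T}^N) \hookrightarrow L^{N/2}(\mathbb{T}^N)$ for $N\in\{2,3\}$, so $u\in L_T^\infty L_x^2$ places $u$ inside the admissible range of Theorem \ref{Theorem 2.1} (take $m=0$, $p=N/2$, $r=4$, which verifies the scaling $\frac{N}{p}+\frac{2}{r}=\frac{3}{2}+\frac{N}{2p}$).

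The main task is thus the $L^2$ bound. I would start from the $L^2$ energy identity \eqref{star} and control the linear term exactly as in \eqref{1.11}, namely
\[
\lambda\|\nabla u\|_{L^2}^2 \leq \tfrac{1}{4}\|\Delta u\|_{L^2}^2 + C\|u\|_{L^2}^2.
\]
The key step is the nonlinear term: integrating by parts once, one obtains
\[
-\int (u\cdot\nabla)u\cdot u = -\int (u\cdot\nabla)\tfrac12|u|^2 = \tfrac12\int (\nabla\cdot u)\,|u|^2,
\]
which exposes $\nabla\cdot u$ explicitly. (In the NSE this expression vanishes; here its nonvanishing is precisely what makes $\nabla\cdot u$ the natural criterion quantity.)

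Then I would apply the bound \eqref{2.3b} obtained via H\"older, the Gagliardo-Nirenberg inequality \eqref{GN1}, and Young's inequality:
\[
\tfrac12\int(\nabla\cdot u)|u|^2 \leq \tfrac14\|\Delta u\|_{L^2}^2 + C\bigl(\|\nabla\cdot u\|_{L^p}^{\frac{4p}{4p-N}} + \|\nabla\cdot u\|_{L^p}\bigr)\|u\|_{L^2}^2.
\]
Combining yields the differential inequality
\[
\tfrac{d}{dt}\|u\|_{L^2}^2 + \|\Delta u\|_{L^2}^2 \lesssim \bigl(1+\|\nabla\cdot u\|_{L^p}^{\frac{4p}{4p-N}} + \|\nabla\cdot u\|_{L^p}\bigr)\|u\|_{L^2}^2.
\]
A direct check shows that the exponent $r=\frac{4p}{4p-N}$ is equivalent to $\frac{N}{p}+\frac{2}{r}=2+\frac{N}{2p}$, and the endpoint values $p=1$ (giving $r=\frac{4}{4-N}$) and $p=\infty$ (giving $r=1$) match the stated ranges. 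Under the hypothesis $\nabla\cdot u\in L_T^r L_x^p$, Gr\"onwall's inequality then delivers $u\in L_T^\infty L_x^2$, which completes the proof upon invoking Theorem \ref{Theorem 2.1}.

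\textbf{Main difficulty.} There is no analytic obstacle of substance beyond the setup already developed; the interesting point is conceptual. For the NSE the analog of this calculation is vacuous because $\nabla\cdot u\equiv 0$ makes the obstruction term vanish identically. For the KSE, the very failure of the divergence-free condition is what converts the nonlinear energy contribution into a quantity \emph{linear} in $\nabla\cdot u$, letting us absorb the top-order part into the fourth-order dissipation and park the rest in a Gr\"onwall factor. The only care required is verifying that the exponent $\frac{4p}{4p-N}$ from Young's inequality aligns with the prescribed scaling relation on all of the claimed range of $(p,r)$, which is a routine computation.
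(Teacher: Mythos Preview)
Your proposal is correct and follows essentially the same route as the paper: start from the $L^2$ energy identity \eqref{star}, rewrite the cubic term as $\tfrac12\int(\nabla\cdot u)|u|^2$, apply the H\"older--Gagliardo--Nirenberg--Young chain \eqref{2.3b}, close by Gr\"onwall, and then invoke Theorem~\ref{Theorem 2.1} to upgrade the $L_T^\infty L_x^2$ bound to $H^1$ global well-posedness. One small correction: in your reduction to Theorem~\ref{Theorem 2.1} the choice $p=N/2$ fails for $N=2$ since the range there is $p\in(N/2,\infty]$ (open at the left endpoint); simply take $p=2$ instead, which lies in the admissible range for both $N\in\{2,3\}$ and yields $r=2$ (resp.\ $r=8/3$), trivially implied by $u\in L_T^\infty L_x^2$.
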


By a similar trick to the proof of Theorem \ref{Theorem 2.7}, we can also obtain the following result in the 2D case:
\begin{proposition}\label{Proposition 2.11}
Suppose $\partial_{2} u_{2} \in L_{T}^{r}L_{x}^{p}$ where $p \in [1,\infty], r \in [1,2]$ satisfy 
\begin{equation*}
\frac{2}{p} + \frac{2}{r} = 2 + \frac{1}{p}.     
\end{equation*}
Then $u_{1} \in L_{T}^{\infty} L_{x}^{2} \cap L_{T}^{2} H_{x}^{2}$.
\end{proposition}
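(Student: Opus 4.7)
The plan is to follow the same philosophy as Propositions \ref{Proposition 2.3}--\ref{Proposition 2.4}: derive a direct $L^{2}_{x}$-bound for $u_{1}$ from the first scalar component of \eqref{KSE} in which only $\partial_{2}u_{2}$ appears on the right-hand side. Testing the $u_{1}$-equation against $u_{1}$ gives
\begin{equation*}
\tfrac{1}{2}\tfrac{d}{dt}\|u_{1}\|_{L^{2}}^{2} + \|\Delta u_{1}\|_{L^{2}}^{2}
= -\int u_{1}(\partial_{1}u_{1})u_{1} - \int u_{2}(\partial_{2}u_{1})u_{1} + \lambda\|\nabla u_{1}\|_{L^{2}}^{2}.
\end{equation*}
The first nonlinear integral equals $\tfrac{1}{3}\int\partial_{1}(u_{1}^{3})=0$ by periodicity, and the second is $\int u_{2}\partial_{2}(u_{1}^{2}/2)$, which after one integration by parts becomes $-\tfrac{1}{2}\int(\partial_{2}u_{2})u_{1}^{2}$. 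Thus the nonlinearity is reduced to a single scalar expression involving only the component $\partial_{2}u_{2}$.

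The remaining estimate is routine. By H\"older's inequality, Gagliardo--Nirenberg \eqref{GN1} with $N=2$ and $m=0$ (giving $\|u_{1}\|_{L^{2p/(p-1)}}^{2}\lesssim \|u_{1}\|_{L^{2}}^{(2p-1)/p}\|u_{1}\|_{H^{2}}^{1/p}$), and Young's inequality with conjugate exponent $\beta=2p$ applied to the $H^{2}$-factor, one obtains
\begin{equation*}
\tfrac{1}{2}\int(\partial_{2}u_{2})u_{1}^{2}
\leq \tfrac{1}{4}\|u_{1}\|_{H^{2}}^{2} + C\|\partial_{2}u_{2}\|_{L^{p}}^{r}\|u_{1}\|_{L^{2}}^{2},
\end{equation*}
with $r=2p/(2p-1)$; this is precisely the scaling relation $\tfrac{1}{p}+\tfrac{2}{r}=2$ in the statement. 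The linear term $\lambda\|\nabla u_{1}\|_{L^{2}}^{2}$ is absorbed as in \eqref{estimate 2}, and Gr\"onwall's inequality applied to the resulting differential inequality $\tfrac{d}{dt}\|u_{1}\|_{L^{2}}^{2} + \|u_{1}\|_{H^{2}}^{2} \lesssim (1+\|\partial_{2}u_{2}\|_{L^{p}}^{r})\|u_{1}\|_{L^{2}}^{2}$ yields $u_{1}\in L^{\infty}_{T}L^{2}_{x}\cap L^{2}_{T}H^{2}_{x}$.

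The only nonstandard ingredient is the algebraic observation that the self-advection contribution $u_{1}(\partial_{1}u_{1})u_{1}$ vanishes by periodicity, while the transverse term $u_{2}(\partial_{2}u_{1})u_{1}$ reorganizes, via integration by parts, to isolate $\partial_{2}u_{2}$. I do not anticipate a serious obstacle beyond this; the endpoint cases $p=\infty$ (where $r=1$ and no Gagliardo--Nirenberg step is needed) and $p=1$ (where $r=2$, handled directly via $H^{2}(\mathbb{T}^{2})\hookrightarrow L^{\infty}$) are accessible with the same argument. As in Remark \ref{Remark 3.5}, the proof crucially exploits the fourth-order diffusion, and so it has no direct analogue for the 2D Burgers' equation.
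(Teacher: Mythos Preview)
Your proposal is correct and follows essentially the same approach as the paper's own proof: test the $u_{1}$-equation against $u_{1}$, observe that $\int u_{1}(\partial_{1}u_{1})u_{1}=0$ and that $-\int u_{2}(\partial_{2}u_{1})u_{1}=\tfrac{1}{2}\int(\partial_{2}u_{2})u_{1}^{2}$, then apply H\"older, \eqref{GN1}, Young, and Gr\"onwall. (Note a harmless sign slip in your intermediate step: $-\int u_{2}(\partial_{2}u_{1})u_{1}=-\int u_{2}\partial_{2}(u_{1}^{2}/2)=+\tfrac{1}{2}\int(\partial_{2}u_{2})u_{1}^{2}$, matching the paper's \eqref{2.4}; this does not affect the estimate.)
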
 

\begin{proof}
This is almost identical to the previous computation for the criteria of $\nabla\cdot u$. We restart from \eqref{2.2b} where $\lambda \lVert \nabla u_{1} \rVert_{L^{2}}^{2}$ may be handled again identically to \eqref{estimate 2} while similarly to \eqref{2.3b}
\begin{align}
- \int (u\cdot\nabla) u_{1} u_{1} =&  \frac{1}{2} \int \partial_{2}u_{2}  (u_{1})^{2} \leq \frac{1}{2} \lVert \partial_{2}u_{2} \rVert_{L^{p}} \lVert u_{1} \rVert_{L^{\frac{2p}{p-1}}}^{2} \nonumber \\
\lesssim&  \lVert \partial_{2}u_{2} \rVert_{L^{p}} \lVert u_{1} \rVert_{L^{2}}^{\frac{2p-1}{p}} \lVert u_{1} \rVert_{H^{2}}^{\frac{1}{p}} \nonumber \\
\leq& \frac{1}{4} \lVert \Delta u_{1} \rVert_{L^{2}}^{2} + C (\lVert \partial_{2} u_{2} \rVert_{L^{p}}^{\frac{2p}{2p-1}} + \lVert \partial_{2} u_{2} \rVert_{L^{p}}) \lVert u_{1} \rVert_{L^{2}}^{2}. \label{2.4} 
\end{align} 
Now applying \eqref{2.4} to \eqref{2.2b} completes the proof of Proposition \ref{Proposition 2.11}. 
\end{proof} 

We can raise this regularity immediately as follows: 
\begin{proposition}\label{Proposition 2.12}
Suppose $\partial_{2} u_{2} \in L_{T}^{r}L_{x}^{p}$ where $p \in [1,\infty], r \in [1,2]$ satisfy 
\begin{equation*}
\frac{2}{p} + \frac{2}{r} = 2 + \frac{1}{p}.    
\end{equation*}
Then $u_{2} \in L_{T}^{\infty} L_{x}^{2} \cap L_{T}^{2} H_{x}^{2}$.
\end{proposition}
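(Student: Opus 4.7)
The plan is to mirror the proof of Proposition \ref{Proposition 2.4} with the roles of $u_1$ and $u_2$ swapped, now leveraging Proposition \ref{Proposition 2.11} to supply the $L^2_T H^2_x$ control on $u_1$ that will tame the remaining nonlinear term.

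First, I would take the $L^2(\mathbb{T}^2)$-inner product of the second component of \eqref{KSE} with $u_2$, yielding
\begin{equation*}
\frac{1}{2}\frac{d}{dt}\|u_2\|_{L^2}^2 + \|\Delta u_2\|_{L^2}^2 = -\int (u\cdot\nabla)u_2\, u_2 + \lambda \|\nabla u_2\|_{L^2}^2.
\end{equation*}
The linear destabilizing term is absorbed just as in \eqref{estimate 2}: interpolation gives $\lambda\|\nabla u_2\|_{L^2}^2 \leq \lambda \|u_2\|_{L^2}\|\Delta u_2\|_{L^2}$, followed by Young to produce $\tfrac14\|\Delta u_2\|_{L^2}^2 + C\|u_2\|_{L^2}^2$.

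For the nonlinear term, I would split it as $-\int u_1 (\partial_1 u_2) u_2 - \int u_2 (\partial_2 u_2) u_2$. The second piece vanishes since $u_2^2 \partial_2 u_2 = \tfrac13 \partial_2(u_2^3)$ integrates to zero on $\mathbb{T}^2$, mirroring the cancellation that removed $u_2 \partial_2 u_1 u_1$ from the analogous computation in \eqref{estimate 5}. The surviving piece is handled identically to \eqref{estimate 5}: using $H^2(\mathbb{T}^2)\hookrightarrow L^\infty(\mathbb{T}^2)$ for $u_1$ together with the interpolation $\|\nabla u_2\|_{L^2}\leq \|u_2\|_{L^2}^{1/2}\|\Delta u_2\|_{L^2}^{1/2}$ gives
\begin{equation*}
\left|-\int u_1 (\partial_1 u_2) u_2\right| \leq \|u_1\|_{L^\infty}\|\nabla u_2\|_{L^2}\|u_2\|_{L^2} \lesssim \|u_1\|_{H^2}\|u_2\|_{L^2}^{3/2}\|\Delta u_2\|_{L^2}^{1/2},
\end{equation*}
and then Young's inequality produces $\tfrac14 \|\Delta u_2\|_{L^2}^2 + C\|u_1\|_{H^2}^{4/3}\|u_2\|_{L^2}^2$.

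Finally, I would assemble the estimates and apply Gr\"onwall. By Proposition \ref{Proposition 2.11} we already have $\|u_1\|_{H^2} \in L^2_T$, so $\|u_1\|_{H^2}^{4/3}\in L^{3/2}_T \subset L^1([0,T])$ on the finite interval, which is exactly what Gr\"onwall requires. This simultaneously yields $u_2 \in L^\infty_T L^2_x$ and, upon time integration of the dissipation, $u_2 \in L^2_T H^2_x$. There is no real obstacle here beyond verifying the bookkeeping; the essential structural point, which is what makes the argument succeed for KSE but not for Burgers' equation (cf. Remark \ref{Remark 3.5}), is that the fourth-order diffusion $\|\Delta u_2\|_{L^2}^2$ produces enough control to absorb the $\|\Delta u_2\|_{L^2}^{1/2}$ factor generated by the $L^\infty$-bound on $u_1$.
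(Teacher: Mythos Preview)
Your proof is correct and follows essentially the same approach as the paper: energy estimate on $u_2$, cancellation of the $u_2\partial_2 u_2 u_2$ term, control of the remaining $-\int u_1(\partial_1 u_2)u_2$ via $\|u_1\|_{L^\infty}\lesssim\|u_1\|_{H^2}$, and then Gr\"onwall using Proposition~\ref{Proposition 2.11}. The only cosmetic difference is the order in which interpolation and Young's inequality are applied: the paper first splits $\|u_1\|_{L^\infty}\|\partial_1 u_2\|_{L^2}\|u_2\|_{L^2}$ by Young into $\|\nabla u_2\|_{L^2}^2 + \|u_1\|_{H^2}^2\|u_2\|_{L^2}^2$ and then interpolates, obtaining a Gr\"onwall coefficient $\|u_1\|_{H^2}^2\in L^1_T$, whereas you interpolate first and obtain $\|u_1\|_{H^2}^{4/3}\in L^{3/2}_T\subset L^1_T$---both equally valid.
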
 

\begin{proof}
We restart from \eqref{2.2o} again where $\lambda \lVert \nabla u_{2} \rVert_{L^{2}}^{2}$ may be handled identically to \eqref{estimate 2} while 
\begin{align}
- \int (u\cdot\nabla) u_{2} u_{2} =& - \int u_{1} \partial_{1} u_{2} u_{2} \leq \lVert u_{1} \rVert_{L^{\infty}} \lVert \partial_{1} u_{2} \rVert_{L^{2}} \lVert u_{2} \rVert_{L^{2}} \nonumber \\
\lesssim&  (\lVert \nabla u_{2} \rVert_{L^{2}}^{2} +  \lVert u_{1} \rVert_{L^{\infty}}^{2} \lVert u_{2} \rVert_{L^{2}}^{2}) \nonumber\\
\lesssim& ( \lVert u_{2} \rVert_{L^{2}} \lVert \Delta u_{2} \rVert_{L^{2}} +  \lVert u_{1} \rVert_{H^{2}}^{2} \lVert u_{2} \rVert_{L^{2}}^{2}) \nonumber  \\
\leq& \frac{1}{4} \lVert \Delta u_{2} \rVert_{L^{2}}^{2} +C (1+ \lVert u_{1} \rVert_{H^{2}}^{2}) \lVert u_{2} \rVert_{L^{2}}^{2} \label{2.4a} 
\end{align}
by the Gagliardo-Nirenberg inequality as in \eqref{estimate 2}, the Sobolev embedding  $H^{2}(\mathbb{T}^{2})\hookrightarrow L^{\infty}(\mathbb{T}^{2})$, and Young's inequality. Therefore, applying \eqref{2.4a} to \eqref{2.2o} leads to 
\begin{equation*}
\frac{1}{2} \frac{d}{dt} \lVert u_{2} \rVert_{L^{2}}^{2}+ \lVert \Delta u_{2} \rVert_{L^{2}}^{2} \leq \frac{1}{2} \lVert \Delta u_{2} \rVert_{L^{2}}^{2}+ C (1+ \lVert u_{1} \rVert_{H^{2}}^{2}) \lVert u_{2} \rVert_{L^{2}}^{2}. 
\end{equation*} 
Now applying Proposition \ref{Proposition 2.11} completes the proof of Proposition \ref{Proposition 2.12}. 
\end{proof}

By Propositions \ref{Proposition 2.11}-\ref{Proposition 2.12}, we deduce that $u \in L_{T}^{\infty} L_{x}^{2}$ and thus by Theorem \ref{Theorem 2.1} we see that the 2D KSE is globally well-posed in $H^{1}(\mathbb{T}^{2})$ as follows: 

\begin{theorem}\label{Theorem 2.13}
Suppose $\partial_{2} u_{2} \in L_{T}^{r}L_{x}^{p}$ where $p \in [1,\infty], r \in [1,2]$ satisfy 
\begin{equation*}
\frac{2}{p} + \frac{2}{r} = 2 + \frac{1}{p}.
\end{equation*}
Then 2D KSE \eqref{KSE} is globally well-posed in $H^{1}(\mathbb{T}^{2})$.
\end{theorem}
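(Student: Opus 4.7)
The plan is to reduce Theorem \ref{Theorem 2.13} to an application of Theorem \ref{Theorem 2.1} via the two preparatory Propositions \ref{Proposition 2.11} and \ref{Proposition 2.12}, mirroring the strategy already deployed for Theorem \ref{Theorem 2.7}: first bootstrap the hypothesis on the single quantity $\partial_{2} u_{2}$ into an $L_{T}^{\infty}L_{x}^{2}$ control on the full velocity $u$, and then invoke the master criterion.

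First, I would apply Proposition \ref{Proposition 2.11} directly to the hypothesis $\partial_{2} u_{2} \in L_{T}^{r} L_{x}^{p}$ with $\tfrac{2}{p} + \tfrac{2}{r} = 2 + \tfrac{1}{p}$ to deduce $u_{1} \in L_{T}^{\infty}L_{x}^{2} \cap L_{T}^{2}H_{x}^{2}$. Next, I would apply Proposition \ref{Proposition 2.12}, whose proof essentially uses the $L_{T}^{2}H_{x}^{2}$-bound on $u_{1}$ just obtained, to deduce $u_{2} \in L_{T}^{\infty}L_{x}^{2} \cap L_{T}^{2}H_{x}^{2}$. Combining the two yields $u = (u_{1}, u_{2}) \in L_{T}^{\infty}L_{x}^{2} \cap L_{T}^{2}H_{x}^{2}$; in particular, for any finite $T > 0$, we have $u \in L_{T}^{2}L_{x}^{2}$.

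Finally, I would invoke Theorem \ref{Theorem 2.1} with parameter choice $(N, m, p, r) = (2, 0, 2, 2)$. The admissibility conditions are met, since $p = 2 \in (N/2, \infty] = (1, \infty]$ and $r = 2 \in [\tfrac{4}{3}, 4)$, and the scaling identity $\tfrac{N}{p} + \tfrac{2}{r} = \tfrac{3+m}{2} + \tfrac{N}{2p}$ reduces to $1 + 1 = \tfrac{3}{2} + \tfrac{1}{2}$, which holds. Theorem \ref{Theorem 2.1} then yields global well-posedness of the 2D KSE in $H^{1}(\mathbb{T}^{2})$. There is no serious obstacle beyond this small bookkeeping check that the endpoint $(p,r)=(2,2)$ really lies inside the admissible range of Theorem \ref{Theorem 2.1}; all of the substantive analysis has already been absorbed into Propositions \ref{Proposition 2.11} and \ref{Proposition 2.12}, and the role of Theorem \ref{Theorem 2.13} itself is just to tie the chain together.
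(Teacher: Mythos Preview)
Your proposal is correct and matches the paper's own argument essentially verbatim: the paper also deduces $u \in L_{T}^{\infty}L_{x}^{2}$ from Propositions \ref{Proposition 2.11}--\ref{Proposition 2.12} and then invokes Theorem \ref{Theorem 2.1}. Your explicit choice $(N,m,p,r)=(2,0,2,2)$ just makes precise the admissible point at which Theorem \ref{Theorem 2.1} is being applied, a detail the paper leaves implicit.
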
 

\begin{remark}
We emphasize again that Theorem \ref{Theorem 2.13} is surprising due to the KSE lacking divergence-free property and that $2 + \frac{1}{p}$ in Theorem \ref{Theorem 2.13} is same as the condition in Theorem \ref{Theorem 2.9}; in comparison, ``$\frac{3(p+2)}{4p}$ for $p  > 2$'' in \eqref{estimate 13} is much smaller than ``2'' in \eqref{Beirao criteria}. 
\end{remark}

Lastly, we aim for $\partial_{2}u_{1}$. It is well-known that in general, such a criterion is expected to be more difficult than $\partial_{2}u_{2}$ where the components of $\nabla$ and $u$ match; indeed, $\frac{p+3}{2p} < \frac{3(p+2)}{4p}$ in \eqref{estimate 13}-\eqref{estimate 14}. Now if we return again to \eqref{star} where we can rely on the same estimates in \eqref{estimate 2} to handle the term $\lambda \lVert \nabla u \rVert_{L^{2}}^{2}$, then we are only faced with a nonlinear term of 
\begin{align*}
\int (u\cdot\nabla) u \cdot u = - \int \pnt{(u\cdot\nabla) u_{1} u_{1} + (u\cdot\nabla) u_{2} u_{2}}. 
\end{align*} 
The first term allows us to separate $\partial_{2}u_{1}$ as follows:  
\begin{align}
-\int (u\cdot\nabla) u_{1} u_{1} =   -\int\pnt{ u_{1}\partial_{1}u_{1}u_{1} + u_{2} \partial_{2} u_{1} u_{1}} = - \int u_{2}\partial_{2} u_{1} u_{1};  \label{estimate 4} 
\end{align}
however, for the second term we are faced with 
\begin{align*}
\int (u\cdot\nabla) u_{2} u_{2} = - \int \pnt{u_{1} \partial_{1} u_{2} u_{2} + u_{2} \partial_{2} u_{2} u_{2}} = - \int u_{1} \partial_{1} u_{2} u_{2}.
\end{align*}
We seem to face a significant difficulty here. We can integrate by parts to deduce $\frac{1}{2} \int \partial_{1} u_{1} u_{2} u_{2}$, but $\partial_{2} u_{1}$ is nowhere to be found. One idea would be to rely on some analogue of anisotropic Sobolev inequality that has proven to be useful in the theory of fluid PDEs such as the NSE and the MHD system (e.g., \cite[Lemma 1.1]{CW11}). However, those results are typically all on the whole space such as $\mathbb{R}^{2}$ because they rely on the decay at infinity; i.e., while 
\begin{equation*}
f(x) = \int_{-\infty}^{x} \partial_{z} f(z) dz, 
\end{equation*}  
an analogous attempt in $\mathbb{T}^{2}$ gives 
\begin{equation*}
f(x) = \int_{-\pi}^{x} \partial_{z} f(z) dz + f(-\pi). 
\end{equation*} 
Remarkably, there is a trick that seems to be special to the KSE here. Let us make our argument precise by working on the equation of $\phi$ rather than $u$ and attain a criterion in terms of $\partial_{12} \phi$, informally $\partial_{1} u_{2}$ as desired (see Theorem \ref{Theorem 3.14}). 

\subsection{\texorpdfstring{Results on $\phi$ and $\partial_{12} \phi$}{}}\label{Subsection 3.5} 
We emphasize that the following two results focus on \eqref{KSE_scalar}, independently of \eqref{KSE}. Let us start with a criterion in terms of $\phi$. 
\begin{theorem}\label{Theorem 3.17}
Suppose $\phi \in L_{T}^{r}W_{x}^{m,p}$ where $m \in [0, 1)$, and 
\begin{align*}
\begin{cases}
p \in [1, \frac{N}{m}), r \in (2, \frac{4}{m}] & \text{ if } N = 2, m \in (0,1), \\
p \in (\frac{N}{m+2}, \frac{N}{m}), r \in (2,\infty) & \text{ if } N = 3, m \in (0,1), \\
p \in (\frac{N}{2}, \infty], r \in [2, \infty) & \text{ if } N \in \{2,3\}, m = 0.
\end{cases} 
\end{align*}
satisfy 
\begin{align*}
\frac{N}{p} + \frac{2}{r} = \frac{2+m}{2} + \frac{N}{2p}. 
\end{align*}
Then the $N$D KSE \eqref{KSE_scalar} is globally well-posed in $L^{2}(\mathbb{T}^{N})$. 
\end{theorem}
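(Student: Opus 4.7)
\textbf{Proof proposal for Theorem \ref{Theorem 3.17}.} Since local well-posedness of \eqref{KSE_scalar} in $L^{2}(\mathbb{T}^{N})$ is available from \cite{Bellout_Benachour_Titi_2003}, it suffices to establish an a priori bound $\phi \in L^{\infty}_{T} L^{2}_{x}\cap L^{2}_{T} H^{2}_{x}$ under the stated hypothesis; then a standard continuation argument yields global well-posedness. The plan is therefore to run a single $L^{2}$ energy estimate on \eqref{KSE_scalar}, test against $\phi$, and bound the cubic nonlinearity $\tfrac{1}{2}\int |\nabla\phi|^{2}\phi$ in exactly the same spirit as Theorem \ref{Theorem 2.1}, but with one derivative less because we are working in $L^{2}$ rather than $\dot H^{1}$.

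First I would take the $L^{2}(\mathbb{T}^{N})$ inner product of \eqref{KSE_scalar} with $\phi$ to obtain
\begin{equation*}
\tfrac{1}{2}\tfrac{d}{dt}\|\phi\|_{L^{2}}^{2}+\|\Delta\phi\|_{L^{2}}^{2}
=\lambda\|\nabla\phi\|_{L^{2}}^{2}-\tfrac{1}{2}\int |\nabla\phi|^{2}\phi.
\end{equation*}
The linear term is absorbed by the same interpolation as in \eqref{1.11}-\eqref{estimate 2}, giving $\lambda\|\nabla\phi\|_{L^{2}}^{2}\le \tfrac14\|\Delta\phi\|_{L^{2}}^{2}+C\|\phi\|_{L^{2}}^{2}$. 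For the nonlinear term, I would apply H\"older's inequality with the two exponents used throughout \S\ref{sec_reg}:
\begin{equation*}
\Big|\int|\nabla\phi|^{2}\phi\Big|
\le\|\phi\|_{L^{\frac{Np}{N-mp}}}\,\|\nabla\phi\|_{L^{\frac{2Np}{Np+mp-N}}}^{2}.
\end{equation*}
Sobolev embedding $W^{m,p}(\mathbb{T}^{N})\hookrightarrow L^{\frac{Np}{N-mp}}(\mathbb{T}^{N})$ (valid under the stated $p<N/m$ when $m\in(0,1)$, and trivial when $m=0$) handles the first factor, while the Gagliardo-Nirenberg inequality \eqref{GN2} gives
\begin{equation*}
\|\nabla\phi\|_{L^{\frac{2Np}{Np+mp-N}}}
\lesssim\|\phi\|_{L^{2}}^{\frac{(2+m)p-N}{4p}}\,\|\phi\|_{H^{2}}^{\frac{(2-m)p+N}{4p}}.
\end{equation*}

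Multiplying these out, the power of $\|\phi\|_{H^{2}}$ is $\frac{(2-m)p+N}{2p}$, which is strictly less than $2$ exactly when $p>N/(2+m)$; this is precisely the lower threshold on $p$ appearing in the hypothesis. Therefore Young's inequality absorbs a factor of $\tfrac14\|\phi\|_{H^{2}}^{2}\le \tfrac14(\|\phi\|_{L^{2}}^{2}+\|\Delta\phi\|_{L^{2}}^{2})$ (with $\|\phi\|_{L^{2}}^{2}$ going into the Gr\"onwall factor) and leaves a remainder of the form
\begin{equation*}
C\bigl(1+\|\phi\|_{W^{m,p}}^{\frac{4p}{(2+m)p-N}}\bigr)\,\|\phi\|_{L^{2}}^{2}.
\end{equation*}
A direct computation shows $\tfrac{4p}{(2+m)p-N}=r$ is exactly the exponent determined by the scaling identity $\tfrac{N}{p}+\tfrac{2}{r}=\tfrac{2+m}{2}+\tfrac{N}{2p}$. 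Combining, we arrive at
\begin{equation*}
\tfrac{d}{dt}\|\phi\|_{L^{2}}^{2}+\|\Delta\phi\|_{L^{2}}^{2}
\le C\bigl(1+\|\phi\|_{W^{m,p}}^{r}\bigr)\|\phi\|_{L^{2}}^{2},
\end{equation*}
and Gr\"onwall's inequality, using $\phi\in L^{r}_{T}W^{m,p}_{x}$, yields $\phi\in L^{\infty}_{T}L^{2}_{x}\cap L^{2}_{T}H^{2}_{x}$ as required.

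The main obstacle is making the Sobolev and Gagliardo-Nirenberg indices legal across all stated ranges; in particular one must check that \eqref{GN2} is applicable in each of the three regimes (2D with $m\in(0,1)$, 3D with $m\in(0,1)$, and $m=0$) and that the endpoint cases ($p=\infty$ when $m=0$, the open upper endpoint $r=4/m$ for $N=2$, and the open right endpoint $r<\infty$ for $N=3$) are accommodated either by direct interpolation $\|\phi\|_{\dot H^{2}}^{2}\le \|\phi\|_{\dot H^{1}}\|\phi\|_{\dot H^{3}}$ as in \eqref{estimate 1} or by a standard density/truncation argument. Once these index checks are done, the rest of the argument is a repetition of the template already established in Theorem \ref{Theorem 2.1} and does not require any new ideas.
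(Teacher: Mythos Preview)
Your proposal is correct and follows essentially the same approach as the paper: test \eqref{KSE_scalar} against $\phi$, absorb the linear term by interpolation, and estimate the cubic term $\tfrac12\int|\nabla\phi|^2\phi$ via H\"older, the Sobolev embedding $W^{m,p}\hookrightarrow L^{\frac{Np}{N-mp}}$, and \eqref{GN2}, then close with Young's inequality and Gr\"onwall. The only cosmetic difference is that the paper writes the linear term as $-\lambda\int(\Delta\phi)\phi$ rather than $\lambda\|\nabla\phi\|_{L^2}^2$, and records the extra lower-order term $\|\phi\|_{W^{m,p}}$ arising from splitting $\|\phi\|_{H^2}^2$ into $\|\phi\|_{L^2}^2+\|\Delta\phi\|_{L^2}^2$; your treatment already accounts for this.
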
 

\begin{remark}
Very recently, Feng and Mazzucato \cite{Feng_Mazzucato_2020} showed a blowup criterion of $\phi \in L_{T}^{\infty} L_{x}^{2}$ for 2D KSE. If $N = 2$, we only require $\frac{2}{p} + \frac{2}{r} \leq 1 + \frac{1}{p}$, allowing cases such as $L_{T}^{\infty} L_{x}^{p}$ for any $p > 1$ and $L_{T}^{4}L_{x}^{2}$. 
\end{remark} 

\begin{proof}
We take $L^{2}(\mathbb{T}^{N})$-inner products on \eqref{KSE_scalar} with $\phi$ to deduce 
\begin{equation}\label{L2phi}
\frac{1}{2} \frac{d}{dt} \lVert \phi \rVert_{L^{2}}^{2} + \lVert \Delta \phi \rVert_{L^{2}}^{2} = - \lambda \int \Delta \phi \phi - \frac{1}{2} \int \lvert \nabla \phi \rvert^{2} \phi. 
\end{equation} 
Now 
\begin{equation}\label{estimate 15}
-\int \Delta \phi \phi \leq \lVert \Delta \phi \rVert_{L^{2}}\lVert \phi \rVert_{L^{2}} \leq \frac{1}{4} \lVert \Delta \phi \rVert_{L^{2}}^{2} + C \lVert \phi \rVert_{L^{2}}^{2}
\end{equation} 
by Young's inequality, while 
\begin{align}
- \frac{1}{2} \int \lvert \nabla \phi \rvert^{2} \phi \leq& \frac{1}{2} \lVert \phi \rVert_{L^{\frac{Np}{N- mp}}} \lVert \nabla \phi \rVert_{L^{\frac{2N p}{Np + mp - N}}}^{2}  \nonumber\\
\lesssim& \lVert \phi \rVert_{W^{m,p}} \lVert \phi \rVert_{L^{2}}^{\frac{ (m+2) p - N}{2p}} \lVert \phi \rVert_{H^{2}}^{\frac{ (2-m) p + N}{2p}} \nonumber  \\
\leq& \frac{1}{4} \lVert \Delta \phi \rVert_{L^{2}}^{2} + C\Big( \lVert \phi \rVert_{W^{m,p}}^{\frac{4p}{(2+m) p- N}} + \lVert \phi \rVert_{W^{m,p}}\Big) \lVert \phi \rVert_{L^{2}}^{2} \label{estimate 16} 
\end{align}
by H$\ddot{\mathrm{o}}$lder's inequality, Sobolev embedding $W^{m,p} (\mathbb{T}^{N}) \hookrightarrow L^{\frac{Np}{N-mp}}(\mathbb{T}^{N})$ in case $m \in (0,1)$ so that $p <\frac{N}{m}$ by hypothesis, \eqref{GN2}, and Young's inequality. By applying \eqref{estimate 15}-\eqref{estimate 16} to \eqref{L2phi}, we deduce that $\phi \in L_{T}^{\infty} L_{x}^{2} \cap L_{T}^{2} H_{x}^{2}$ and thus the desired result. 
\end{proof}

In the following theorem, we assume $N = 2$ and aim for a result analogous to a criterion of \eqref{KSE} in terms of $\partial_{1}u_{2}$ as promised. 
\begin{theorem}\label{Theorem 3.14}
Suppose $\partial_{12} \phi \in L_{T}^{r}L_{x}^{p}$ where $p \in [1,\infty], r \in [1,2]$ satisfy 
\begin{equation*}
\frac{2}{p} + \frac{2}{r} = 2 + \frac{1}{p}.     
\end{equation*}
Then 2D KSE \eqref{KSE_scalar} is globally well-posed in $L^{2}(\mathbb{T}^{2})$.
\end{theorem}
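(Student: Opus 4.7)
The plan is to carry out a $\dot{H}^{1}(\mathbb{T}^{2})$-energy estimate on $\phi$ by testing \eqref{KSE_scalar} against $-\Delta\phi$, which after integration by parts yields
\begin{equation*}
\tfrac{1}{2}\tfrac{d}{dt}\|\nabla\phi\|_{L^{2}}^{2} + \|\nabla\Delta\phi\|_{L^{2}}^{2} = \lambda\|\Delta\phi\|_{L^{2}}^{2} + \tfrac{1}{2}\int |\nabla\phi|^{2}\Delta\phi\, dx.
\end{equation*}
Expanding the cubic term as $\tfrac{1}{2}\int(\phi_{1}^{2}+\phi_{2}^{2})(\phi_{11}+\phi_{22})\,dx$, the two ``diagonal'' pieces vanish identically, since $\int \phi_{i}^{2}\phi_{ii}\,dx = \tfrac{1}{3}\int \partial_{i}(\phi_{i}^{3})\,dx = 0$ on $\mathbb{T}^{2}$ for $i=1,2$, while a single integration by parts in the transverse variable converts each of the surviving cross terms into a multiple of $\int \phi_{1}\phi_{2}\phi_{12}\,dx$, giving
\begin{equation*}
\tfrac{1}{2}\int |\nabla\phi|^{2}\Delta\phi\, dx = -2\int \phi_{1}\phi_{2}\phi_{12}\, dx.
\end{equation*}
This is the trick ``special to the KSE'' previewed in the text: working at the scalar level, so that the symmetry $\partial_{2}\phi_{1}=\partial_{1}\phi_{2}=\phi_{12}$ is built in, together with the structural cancellation $\int\partial_{i}(\phi_{i}^{3})\,dx=0$, allows one to isolate $\phi_{12}$ precisely where the analogous vector-form estimate would produce only $\nabla\cdot u=\Delta\phi$.

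\textbf{Estimates and closure.} For the reduced nonlinearity I would use H\"older's inequality with exponents $(p,\,2p',\,2p')$ where $p'=p/(p-1)$, together with the 2D Gagliardo-Nirenberg bound
\begin{equation*}
\|\nabla\phi\|_{L^{2p/(p-1)}}^{2} \lesssim \|\nabla\phi\|_{L^{2}}^{(2p-1)/p}\|\nabla\Delta\phi\|_{L^{2}}^{1/p},
\end{equation*}
and Young's inequality with conjugate exponents $(2p,\,2p/(2p-1))$, to obtain
\begin{equation*}
\Bigl|2\int\phi_{1}\phi_{2}\phi_{12}\,dx\Bigr| \leq \tfrac{1}{4}\|\nabla\Delta\phi\|_{L^{2}}^{2} + C\|\phi_{12}\|_{L^{p}}^{\,2p/(2p-1)}\|\nabla\phi\|_{L^{2}}^{2}.
\end{equation*}
The time-exponent $r=2p/(2p-1)$ is exactly the one prescribed by the scaling relation $\tfrac{2}{p}+\tfrac{2}{r}=2+\tfrac{1}{p}$, while the linear term $\lambda\|\Delta\phi\|_{L^{2}}^{2}$ is absorbed as in \eqref{1.11}. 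Gr\"onwall's inequality then yields $\nabla\phi\in L^{\infty}_{T}L^{2}_{x}$ and $\phi\in L^{2}_{T}\dot{H}^{3}_{x}$. To upgrade this to control of $\|\phi\|_{L^{2}}$ itself I integrate \eqref{KSE_scalar} over $\mathbb{T}^{2}$: all linear contributions vanish, so the spatial mean of $\phi$ evolves at rate $-\tfrac{1}{2|\mathbb{T}^{2}|}\|\nabla\phi\|_{L^{2}}^{2}$, which is already integrable on $[0,T]$. Combining the boundedness of the mean with Poincar\'e's inequality gives $\phi\in L^{\infty}_{T}L^{2}_{x}$, and the local $L^{2}$ theory of \cite{Bellout_Benachour_Titi_2003} upgrades this no-blowup conclusion to global well-posedness in $L^{2}(\mathbb{T}^{2})$.

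\textbf{Main obstacle.} The creative step, and the one that most sharply distinguishes this argument from the NSE/Burgers setting, is the algebraic identification of $-2\int\phi_{1}\phi_{2}\phi_{12}$ as the cubic form governing the $\dot{H}^{1}$-balance. It is genuinely two-dimensional (the analogous expansion in 3D produces three independent mixed-second-derivative terms rather than one, which is presumably why the authors state the criterion only in 2D), it relies on the gradient/scalar formulation, and it depends on the specific quadratic nonlinearity $\tfrac{1}{2}|\nabla\phi|^{2}$ of the KSE; once this identity is in hand, the remaining H\"older/Gagliardo-Nirenberg/Young bookkeeping is routine.
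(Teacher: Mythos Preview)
Your argument is correct and follows essentially the same route as the paper: the key identity $\tfrac12\int|\nabla\phi|^2\Delta\phi=-2\int\phi_1\phi_2\phi_{12}$, the H\"older/Gagliardo--Nirenberg/Young estimate producing the exponent $\tfrac{2p}{2p-1}=r$, and the resulting $L^\infty_T\dot H^1_x\cap L^2_T\dot H^3_x$ bound all match the paper's proof. The only difference is in the final recovery of $\|\phi\|_{L^2}$, where the paper re-runs the $L^2$ energy identity \eqref{L2phi} and bounds $\tfrac{d}{dt}\|\phi\|_{L^2}$ directly, whereas you integrate \eqref{KSE_scalar} over $\mathbb{T}^2$ to control the spatial mean and then invoke Poincar\'e; your version is slightly cleaner but equivalent in spirit.
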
 

\begin{proof}
Let us consider the equation \eqref{KSE_scalar}, take $L^{2}(\mathbb{T}^{2})$-inner products with $-\Delta \phi$ so that 
\begin{align}
& \frac{1}{2} \frac{d}{dt} \lVert \nabla \phi \rVert_{L^{2}}^{2} + \lVert \Delta \nabla \phi \rVert_{L^{2}}^{2} \nonumber \\
=&\nonumber
\lambda \lVert \Delta \phi \rVert_{L^{2}}^{2} - \frac{1}{2} \int \partial_{1} ((\partial_{1} \phi)^{2} + (\partial_{2} \phi)^{2}) \partial_{1} \phi 
%\nonumber\\& \hspace{25mm} 
-\frac{1}{2} \int\partial_{2} ((\partial_{1} \phi)^{2} + (\partial_{2} \phi)^{2}) \partial_{2} \phi 
\\& \triangleq
\lambda \lVert \Delta \phi \rVert_{L^{2}}^{2} +
 I + II. \label{estimate 18}
\end{align} 
It is straight-forward to estimate 
\begin{equation}\label{estimate 22}
\lambda \lVert \Delta \phi \rVert_{L^{2}}^{2} \leq \frac{1}{8} \lVert \Delta \nabla \phi \rVert_{L^{2}}^{2} + C \lVert \nabla \phi \rVert_{L^{2}}^{2} 
\end{equation}
by Young's inequality. Next, we estimate 
\begin{align}
I =& - \int \frac{1}{3} \partial_{1} (\partial_{1} \phi)^{3} + \partial_{2} \phi \partial_{12} \phi \partial_{1} \phi \label{estimate 19} \\
\leq& \lVert \partial_{2} \phi \rVert_{L^{\frac{2p}{p-1}}} \lVert \partial_{12} \phi \rVert_{L^{p}} \lVert \partial_{1} \phi \rVert_{L^{\frac{2p}{p-1}}} \nonumber\\
\lesssim& \lVert \partial_{2} \phi \rVert_{L^{2}}^{\frac{2p-1}{2p}} \lVert \partial_{2} \phi \rVert_{H^{2}}^{\frac{1}{2p}} \lVert \partial_{12} \phi \rVert_{L^{p}} \lVert \partial_{1} \phi \rVert_{L^{2}}^{\frac{2p-1}{2p}} \lVert \partial_{1} \phi \rVert_{H^{2}}^{\frac{1}{2p}}\nonumber \\
\leq& \frac{1}{8} (\lVert \partial_{1} \Delta \phi \rVert_{L^{2}}^{2} + \lVert \partial_{2} \Delta \phi \rVert_{L^{2}}^{2}) \nonumber\\
&+ C( \lVert \partial_{12} \phi \rVert_{L^{p}} + \lVert \partial_{12} \phi \rVert_{L^{p}}^{\frac{4p}{4p-1}} + \lVert \partial_{12} \phi \rVert_{L^{p}}^{\frac{2p}{2p-1}}) \lVert \nabla \phi \rVert_{L^{2}}^{2} \nonumber
\end{align} 
by H$\ddot{\mathrm{o}}$lder's inequality, \eqref{GN1} and Young's inequality. Next, we get a break here as it turns out  that we can apply identical estimates in $I$ to $II$ because 
\begin{align}
II =& - \frac{1}{2} \int \partial_{2} ((\partial_{1} \phi)^{2} + (\partial_{2} \phi)^{2} ) \partial_{2} \phi \nonumber\\
=& - \int [(\partial_{1} \phi) (\partial_{12} \phi) + (\partial_{2} \phi) (\partial_{22} \phi) ] \partial_{2} \phi = - \int \partial_{1} \phi \partial_{12} \phi \partial_{2} \phi. \label{estimate 20}
\end{align}
By applying \eqref{estimate 22}-\eqref{estimate 20} to \eqref{estimate 18}, we obtain $\phi \in L_{T}^{\infty} \dot{H}_{x}^{1} \cap L_{T}^{2} \dot{H}_{x}^{3}$. It is not difficult to ensure the lower regularity at $L_{T}^{\infty}L_{x}^{2}$ from such high regularity by going back to \eqref{L2phi}. Indeed, we can compute from \eqref{L2phi}
\begin{align}
 \frac{1}{2} \frac{d}{dt} \lVert \phi \rVert_{L^{2}}^{2} &+ \lVert \Delta \phi \rVert_{L^{2}}^{2}  \leq \lambda \lVert \nabla \phi \rVert_{L^{2}}^{2} + \frac{1}{2} \lVert \phi \rVert_{L^{2}} \lVert \nabla \phi \rVert_{L^{2}} \lVert \nabla \phi \rVert_{L^{\infty}} \nonumber\\
\lesssim& \lVert \phi \rVert_{L^{2}} \lVert \Delta \phi \rVert_{L^{2}} + \lVert\phi \rVert_{L^{2}} \lVert \nabla \phi \rVert_{L^{2}} (\lVert \nabla \phi \rVert_{L^{2}} + \lVert \Delta \nabla \phi \rVert_{L^{2}}) \label{estimate 21}
\end{align} 
where we used the same estimate in \eqref{estimate 2} and the Sobolev embedding  $H^{2}(\mathbb{T}^{2}) \hookrightarrow L^{\infty}(\mathbb{T}^{2})$. This estimate \eqref{estimate 21} immediately implies 
\begin{align}
\frac{d}{dt} \lVert \phi \rVert_{L^{2}} \lesssim  \lVert \nabla \phi \rVert_{L^{2}} + \lVert \Delta \nabla \phi \rVert_{L^{2}} + \lVert \nabla \phi \rVert_{L^{2}} (\lVert \nabla \phi \rVert_{L^{2}} + \lVert \Delta \nabla \phi \rVert_{L^{2}}) \label{estimate 23} 
\end{align} 
due to Young's inequality. Integrating \eqref{estimate 23} over $[0,t]$ now and relying on the fact that $\phi \in L_{T}^{\infty} \dot{H}_{x}^{1} \cap L_{T}^{2} \dot{H}_{x}^{3}$ complete the proof of Theorem \ref{Theorem 3.14}. 
\end{proof}

% =====================================================================
\section{Computational Results}\label{sec_comp}
% =====================================================================
In this section, we present some computational results.  In particular, we simulate solutions to the 2D KSE.  All simulations are done for the scalar form of the system \eqref{KSE_scalar},
(except for the simulation used to generate Figure \ref{fig_Pu}, which directly simulated \eqref{KSE}),
with the slight modification of subtracting the spatial average at each time step, as is common in simulations of the KSE (see, e.g., \cite{Kalogirou_Keaveny_Papageorgiou_2015}).  Note that this modification formally preserves the differential form of the KSE, system \eqref{KSE}.  In this section, we assume that $u=\nabla\phi$;
% (although see Remark \ref{u_neq_nabla_phi}); 
that is, here, $u$ is essentially a notation for $\nabla \phi$.

The existing literature on computational studies of the 2D KSE is fairly limited, but we refer to \cite{Kalogirou_Keaveny_Papageorgiou_2015} where an exhaustive computational study of the KSE was carried out in various rectangular domains.  Various modifications to the 2D KSE have been investigated computationally in several works; see, e.g., \cite{Tomlin_Kalogirou_Papageorgiou_2018,Larios_Yamazaki_2020_rKSE}.

Our simulations were computed via MATLAB (R2020a) on the periodic square domain $[-\pi,\pi)^2$, using a uniform rectangular grid $256\times256$.  Derivatives were computed using spectral methods (also called pseudo-spectral methods), specifically using Matlab's \texttt{fftn} and \texttt{ifftn} implementations of the $N$D discrete Fourier transform, making sure to respect the 2/3's dealiasing law for quadratic nonlinearities, as well as the vanishing of the Nyquist frequency for only odd-order derivatives.  Time stepping was carried out using a 4th-order ETD-RK4 method (see, e.g., \cite{Kassam_Trefethen_2005,Kennedy_Carpenter_2003_IMEX}) to handle the linear terms implicitly, while the nonlinear term was computed explicitly and in physical space. In particular, a complex contour with 64 nodes (technically 32 nodes plus a symmetry condition) was used to handle the removable singularities in the ETD-RK4 coefficients, as proposed in \cite{Kassam_Trefethen_2005}. We used a uniform time-step of $\Delta t = 2.1227\times 10^{-5}$, well below the advective CFL given by $\Delta t< \Delta x/(\tfrac12\|\nabla\phi\|_{L^\infty(0,T;L^\infty(\nT^2))})\approx4.4\times 10^{-5}$.  All simulations presented here were well-resolved at each time step, in the sense that the magnitude of the energy spectrum of $\vphi$ was below machine precision ($\epsilon=2.2204\times 10^{-16}$ in MATLAB) for all wave numbers above the $2/3$'s dealising cutoff (i.e., for all $\vec{k}\in\nZ^2$ with $|\vec{k}|\geq256/3$). This can be seen in the plots of the spectrum at each time step in Figure \ref{fig_spec}. We used roughly the largest $\lambda$ possible at this resolution ($256^2$) while maintaining well-resolvedness; namely $\lambda=29.1$ (found experimentally), meaning that 96 modes were unstable (not counting the zero mode, which was set to zero); that is, $\#\{\vec{k}\in\nZ^2\setminus\{\vec{0}\}:|\vec{k}|^2\triangleq k_1^2+k_2^2<29.1\}=96$.  However, we observed in tests that the behavior of the solution presented here was qualitatively similar for a wide range of $\lambda$ values.% (though not quantitatively).

For concreteness, we use the initial data $\phi^{in}$ studied in \cite{Kalogirou_Keaveny_Papageorgiou_2015}; namely 
\begin{align}\label{KKP}
 \phi^{in}(x,y) = \sin(x+y) + \sin(x) + \sin(y).
\end{align}
However, we also tested several other choices of initial data, including data based on randomly generated Fourier amplitudes, and found qualitatively similar results to those presented here.  Thus, for simplicity of presentation, we only include the results from simulations with initial data \eqref{KKP}.
\begin{figure}[!ht]
%\begin{wrapfigure}[18]{r}{0.6\textwidth}
%\vspace{-3mm}
\centering
\captionsetup{width=\textwidth}
\includegraphics[width=0.7\textwidth,trim=1mm 3mm 22mm 8mm, clip]
{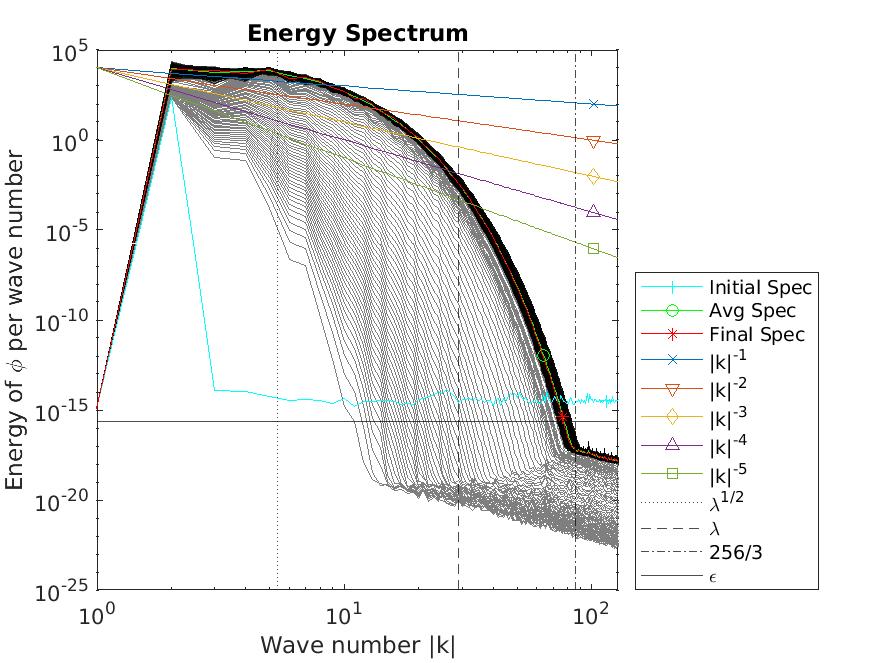}
% \begin{minipage}[t]{0.65\textwidth}
% \captionsetup{margin=1.2cm}
\vspace{-4mm}
\caption{\label{fig_spec} \scriptsize The spectrum at time steps $t=0.0,t=0.001,t=0.002,\ldots,10.0.$ (log-log scale), starting in grey and getting darker with increasing time.  Highlighted are the initial spectrum  (cyan {\color{cyan} $+$}), the final ($t=10.0$) spectrum (red {\color{red}$*$}), and the average spectrum (green {\color{green} $\circ$}), (averaged over $9\leq t\leq10$).  }
% \end{minipage}
%\end{wrapfigure}
\end{figure}

We computed the $L^p$ norms for $1\leq p\leq\infty$ in physical space (using Riemann sums for $p<\infty$), with the modification of dividing by the area of the domain; that is, in this section only, unless otherwise noted, we denote
\begin{align}\label{normalizeNorms}
 \|f\|_{L^p}^p \triangleq \frac{1}{4\pi^2}\int_{-\pi}^\pi\int_{-\pi}^\pi |f(x,y)|^p\,dx\,dy.
\end{align}
for $p<\infty$, so that $\|f\|_{L^p}\leq\|f\|_{L^\infty}\triangleq\esssup_{(x,y)\in\nT^2}|f(x,y)|$.  This is done only for aesthetic purposes so that it is easier to see the plots on the same figure, see Figures \ref{LpNorms1} and \ref{LpNorms2}.  From these graphics, it appears that all the criteria proven in the present work hold.  Indeed, it appears that $\phi$, $\phi_x$, $\phi_y$, $u_1\triangleq\phi_x$, $u_2\triangleq\phi_y$, $\phi_{xx}$, $\phi_{xy}\equiv\phi_{yx}$, $\phi_{yy}$, and $\text{div}(u)=\Delta\phi$ are all in $L^\infty([0,T],L^\infty)$ for all $T>0$, at least for the initial data and $\lambda$-values we tested.  Thus, in these tests, we do not see evidence for ill-posedness.

\begin{figure}[!ht]
\centering
\includegraphics[width=0.32\textwidth,trim=22mm 8mm 25mm 4mm, clip]
{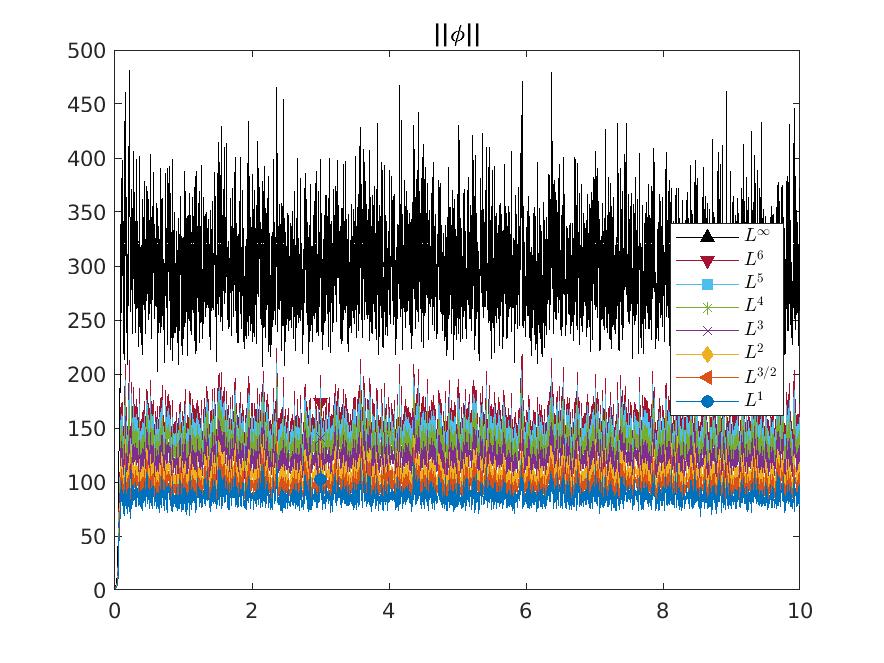}
\includegraphics[width=0.32\textwidth,trim=20mm 8mm 25mm 4mm, clip]
{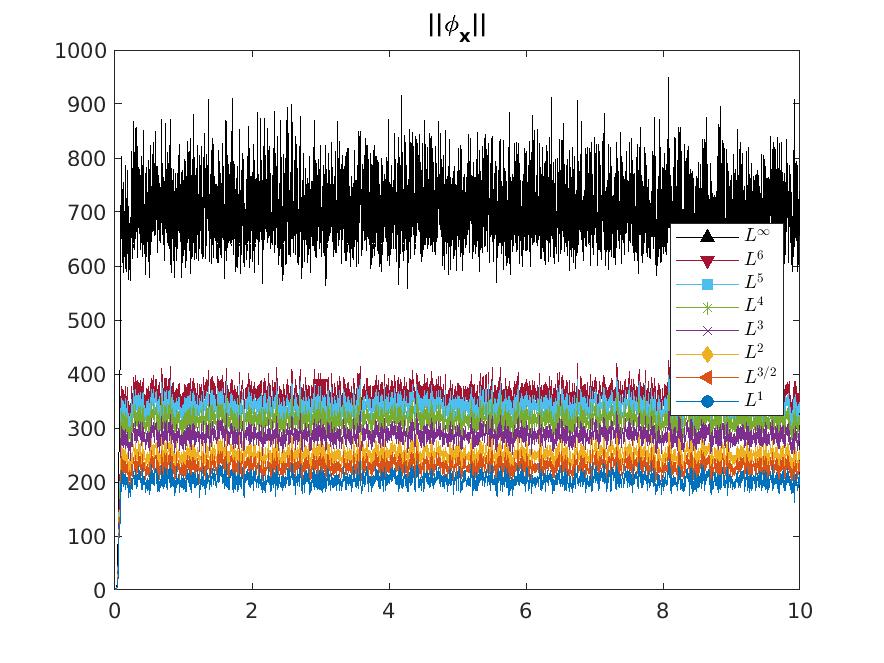}
\includegraphics[width=0.32\textwidth,trim=20mm 8mm 25mm 4mm, clip]
{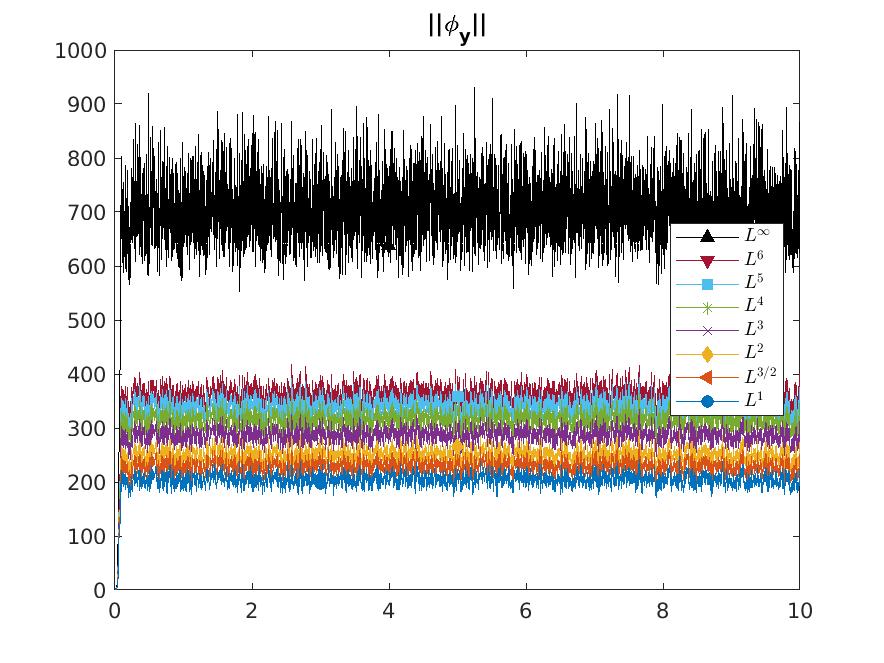}
\caption{\label{LpNorms1} \scriptsize $L^p$ norms of $\phi$ (left), $\phi_x$ (middle) and $\phi_y$ (right).  Note that $u_1=\phi_x$, and $u_2=\phi_y$.  ($p = 1,\tfrac32,2,3,4,5,6,\infty.$)}
\end{figure}

\begin{figure}[!ht]
\centering
\includegraphics[width=0.49\textwidth,trim=19mm 8mm 25mm 4mm, clip]
{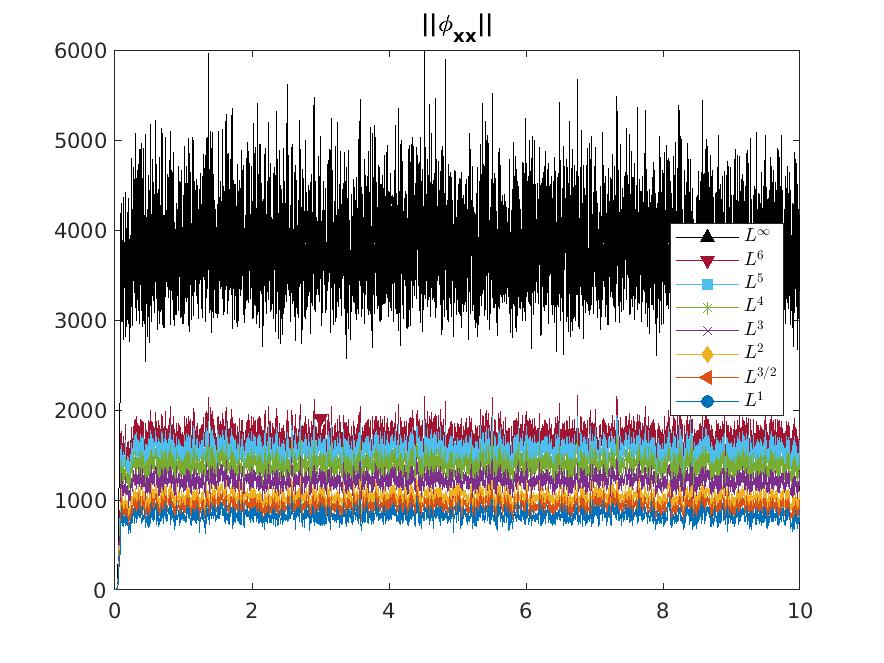}
\includegraphics[width=0.49\textwidth,trim=19mm 8mm 25mm 4mm, clip]
{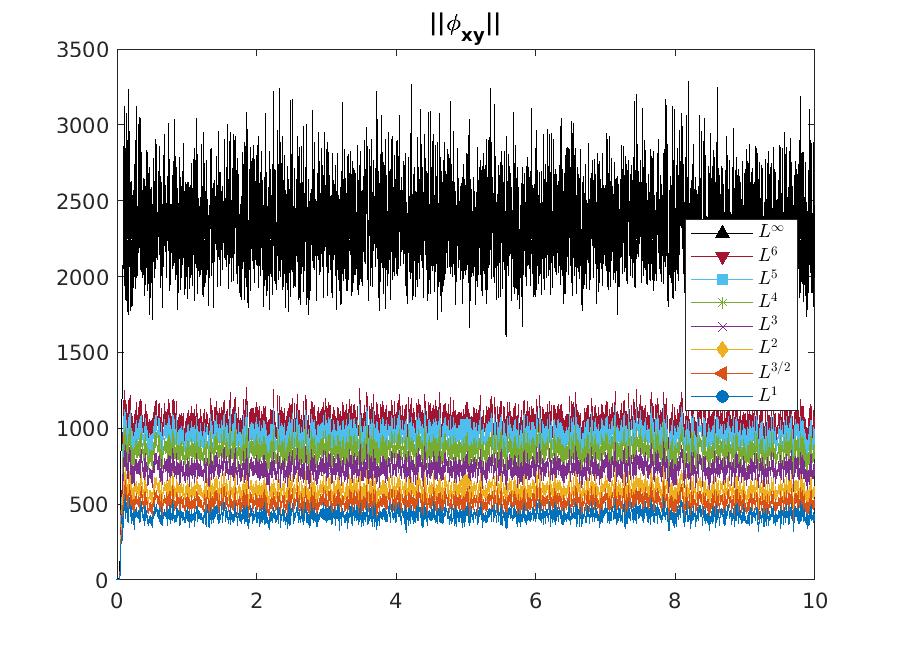}\\
\includegraphics[width=0.49\textwidth,trim=19mm 8mm 25mm 4mm, clip]
{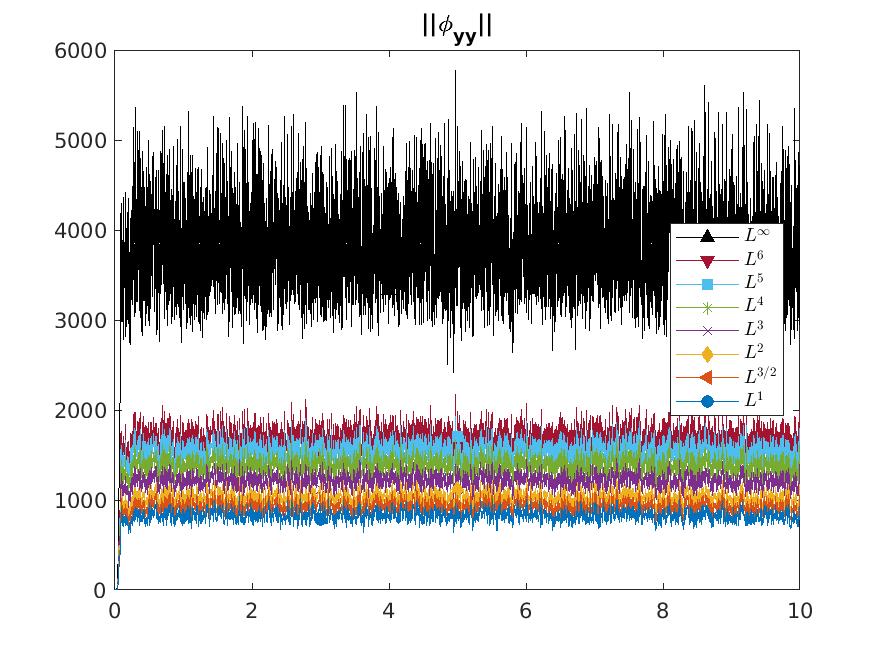}
\includegraphics[width=0.49\textwidth,trim=19mm 8mm 25mm 4mm, clip]
{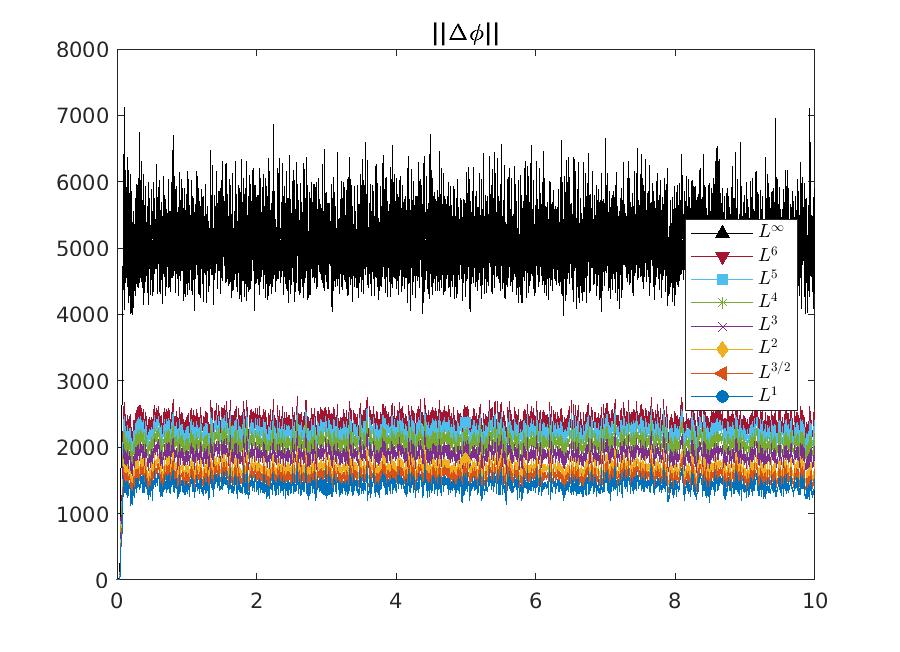}
\caption{\label{LpNorms2} \scriptsize $L^p$ norms of $\phi_{xx}$ (top left), $\phi_{xy}$ (top right) and $\phi_{yy}$ (bottom left), and $\Delta\phi$ (bottom right).  Note that $\text{div}(u)=\Delta\phi$. ($p = 1,\tfrac32,2,3,4,5,6,\infty.$)}
\end{figure}

In the context of the 2D NSE, it is common to look for regions in, e.g., the energy-enstrophy plane or enstrophy-palenstrophy plane, etc., where the attractor lies (see, e.g., \cite{Cao_Jolly_Titi_Whitehead_2019,Dascaliuc_Foias_Jolly_2005,Dascaliuc_Foias_Jolly_2007,Dascaliuc_Foias_Jolly_2008, Dascaliuc_Foias_Jolly_2010,Emami_Bowman_2018}).  In the same spirit, we consider analogous planes combining  $\|\vphi\|_L^2$ (``Energy''),  $\|\nabla\vphi\|_L^2$ (``Enstrophy''), and  $\|\Delta\vphi\|_L^2$ (``Palenstrophy''), where we have borrowed the names for these quantities from the NSE setting even though they have different physical meanings in the context of the KSE.  In Figure \ref{EnEnsPal}, these plots are displayed.  Here, we see that the large-time dynamics are constrained to a relatively small region of these planes with a roughly elliptical shape.  

\begin{figure}[!ht]
\centering
\includegraphics[width=0.32\textwidth,trim=7mm 2mm 22mm 8mm, clip]
{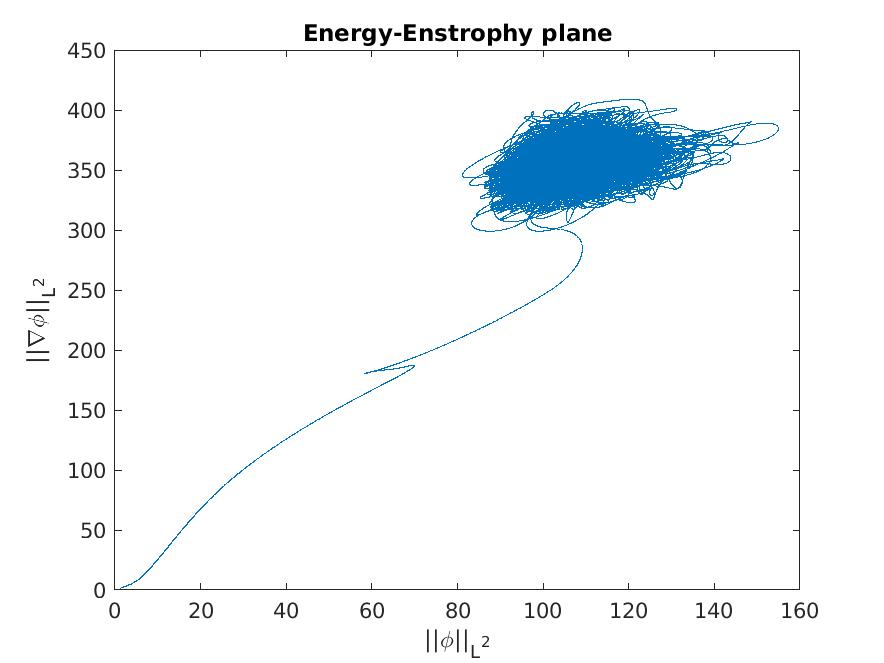}
\includegraphics[width=0.32\textwidth,trim=5mm 2mm 22mm 8mm, clip]
{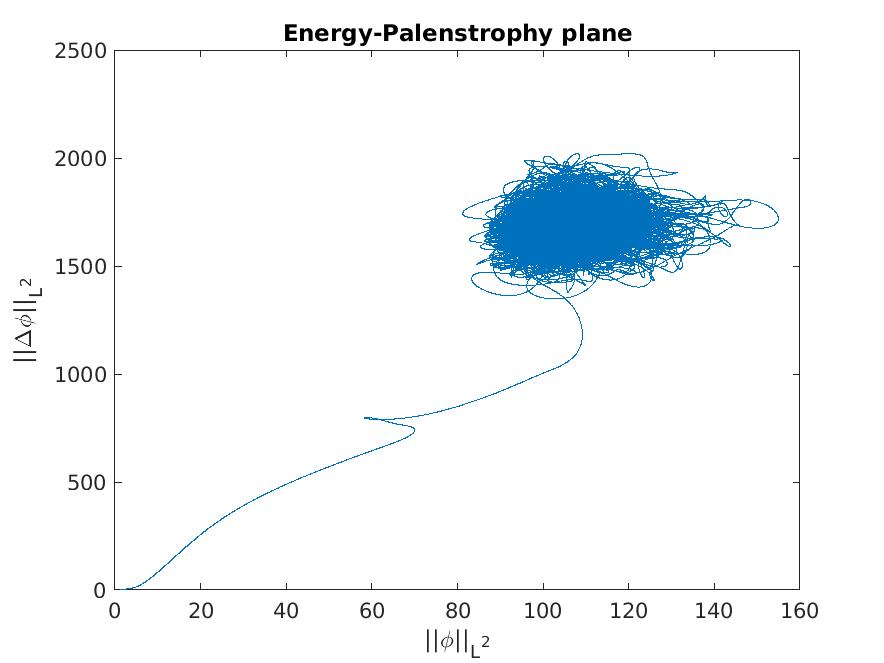}
\includegraphics[width=0.32\textwidth,trim=5mm 2mm 23mm 8mm, clip]
{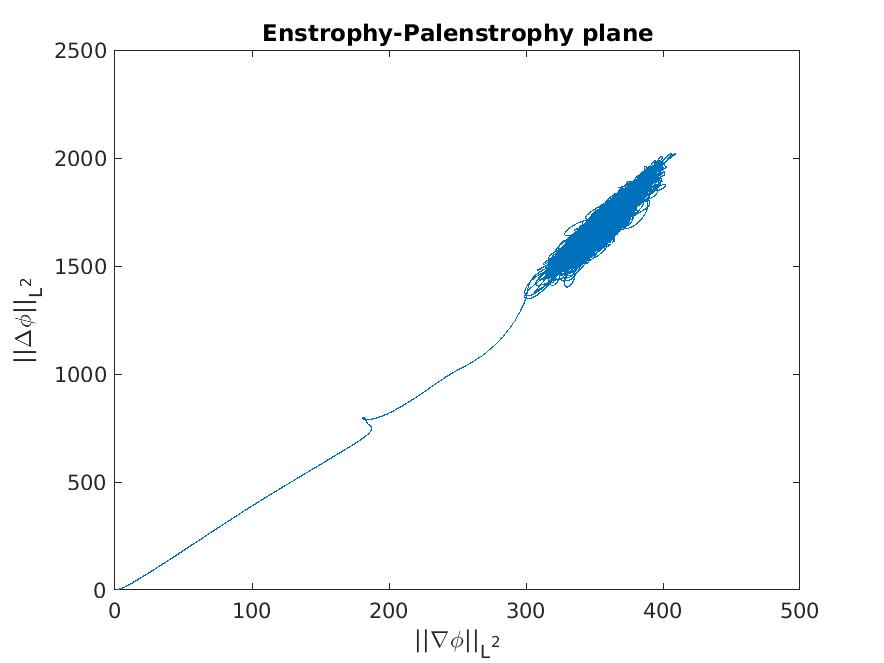}
\caption{\label{EnEnsPal} \scriptsize Plots of combinations of ``Energy'' $\|\vphi\|_L^2$, ``Enstrophy'' $\|\nabla\vphi\|_L^2$ and ``Palenstrophy'' $\|\Delta\vphi\|_L^2$ for times $0\leq t\leq3$.  At $t=0$, the solutions are close to the origin, and move roughly northeast until reaching a region in the northeast where they tend to remain, moving about chaotically.}
\end{figure}

We also display some quantities that occur in standard energy estimates.  For example, in taking the inner product with $\phi$ in \eqref{KSE_scalar} as in \eqref{L2phi}, the nonlinear term $|\nabla\phi|^2/2$ (bottom left of Figure \ref{prettyGraphics}) yields the term $\frac12\int_\Omega\phi|\nabla\phi|^2\,dx$ (Figure \ref{LpNormsNL}).  If this term happened to be positive, or even if it had a positive integral over aribitrary time intervals of the form $[0,T]$, $T>0$ this would be sufficient to obtain a bound on the $L^2$ norm on finite time intervals, and hence a proof of global well-posedness of the KSE.  Our numerical tests in Figure \ref{LpNormsNL} (left) show that this term is typically positive (at least, for the simulations we performed), but can intermittantly become negative for small windows of time.  In Figure \ref{prettyGraphics} (bottom middle), the integrand of this term, $\phi|\nabla\phi|^2$ can also be seen at time $t=10.0$.  Here, we see that the integrand is mostly positive in space, but has regions where it is strongly negative.  A deeper understanding of these negative regions may be important to understanding the well-posedness of the KSE.  Note also that, strictly speaking, the positivity of  $(\phi,|\nabla\phi|^2) + \|\Delta\phi\|_{L^2}^2$ would be sufficient to bound the $L^2$ norm, as from \eqref{L2phi}, one has
\begin{align*}
 \frac{1}{2} \frac{d}{dt} \lVert \phi \rVert_{L^{2}}^{2}
 + \tfrac{1}{2} (\lvert\nabla \phi \rvert^{2},\phi)
 + \lVert \Delta \phi \rVert_{L^{2}}^{2}
 = 
 - \lambda (\Delta \phi,\phi)
 \leq 
 \tfrac12\|\Delta \phi\|_{L^2}^2 + \tfrac{\lambda^2}{2}\|\phi\|_{L^2}^2,
\end{align*}
so that
\begin{align*}
\frac{d}{dt} \lVert \phi \rVert_{L^{2}}^{2} 
+ (\lvert\nabla \phi \rvert^{2},\phi)
+ \lVert \Delta \phi \rVert_{L^{2}}^{2} 
 \leq 
\lambda^2\|\phi\|_{L^2}^2. 
\end{align*}
A nearly identical calculation yields
\begin{align*}
\frac{d}{dt} \lVert u \rVert_{L^{2}}^{2} 
+ 2(u\cdot\nabla u,u)
+ \lVert \Delta u \rVert_{L^{2}}^{2} 
 \leq 
\lambda^2\|u\|_{L^2}^2. 
\end{align*}
Thus, in Figure \ref{LpNormsNL} we also display $(\phi,|\nabla\phi|^2) + \|\Delta\phi\|_{L^2}^2$ and $2(u\cdot\nabla u,u)+ \lVert \Delta u \rVert_{L^{2}}^{2}$.  In the figure, we see that the terms coming quantities coming from the non-linear terms are mostly positive, but occasionally become negative.  However, after adding the norm of the Laplacian, the quantity remains positive for all time.  Of course, if one could \textit{prove} that such positivity holds for general initial data, one could prove global well-posedness.

\begin{figure}[!ht]
\centering
\includegraphics[width=0.49\textwidth,trim=28mm 8mm 25mm 4mm, clip]
{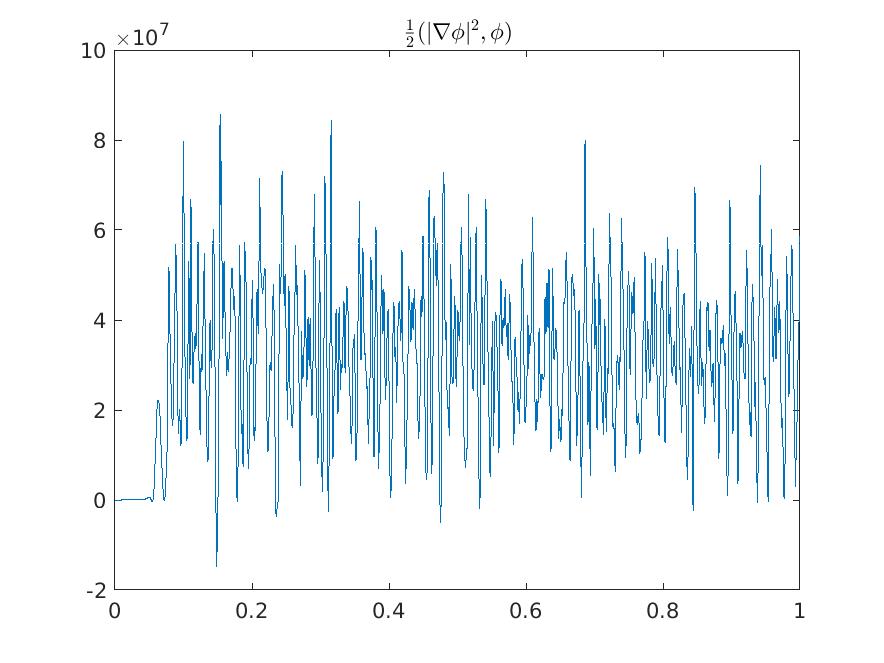}
\includegraphics[width=0.49\textwidth,trim=28mm 8mm 25mm 4mm, clip]
{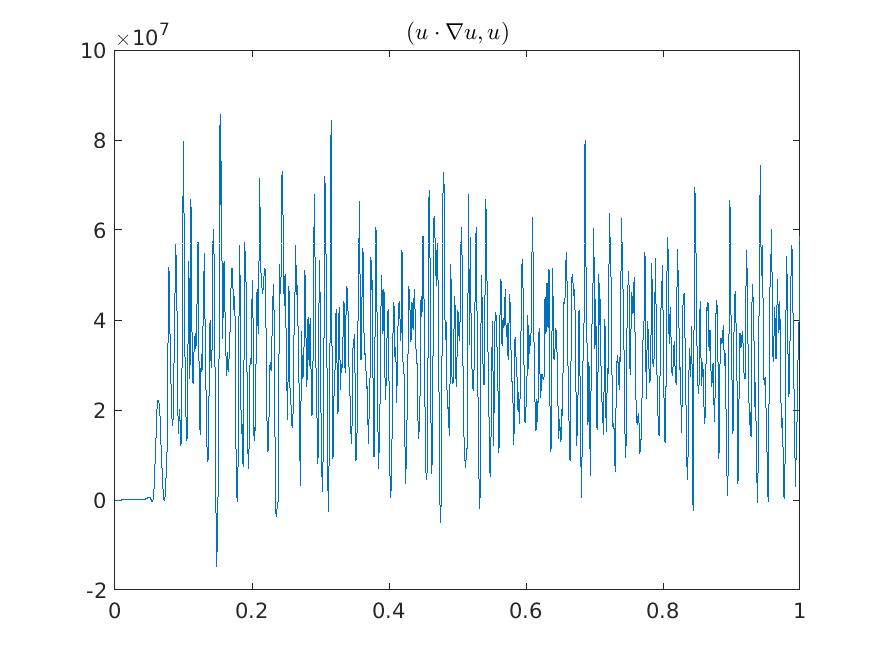}
\includegraphics[width=0.49\textwidth,trim=28mm 8mm 25mm 4mm, clip]
{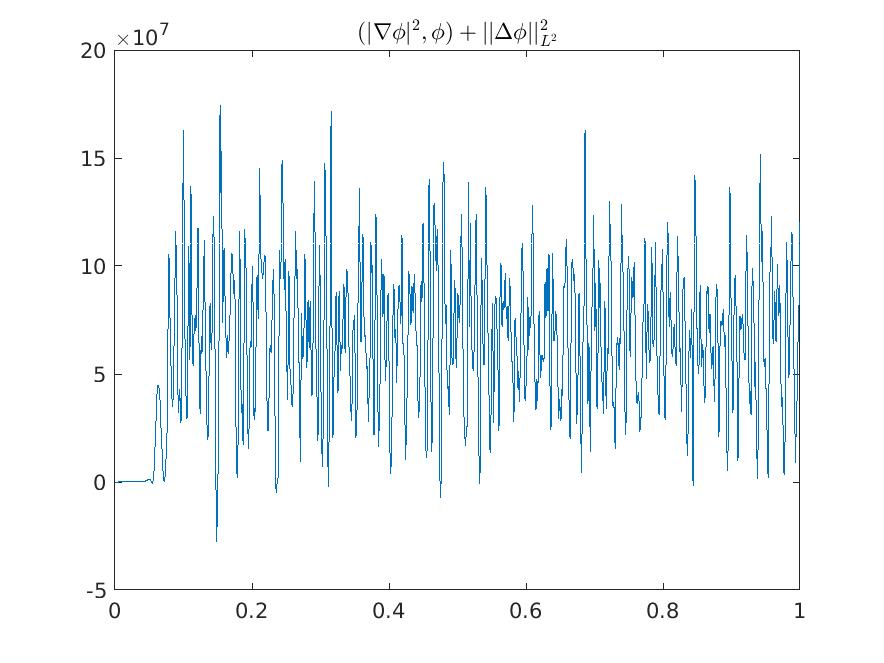}
\includegraphics[width=0.49\textwidth,trim=28mm 8mm 25mm 4mm, clip]
{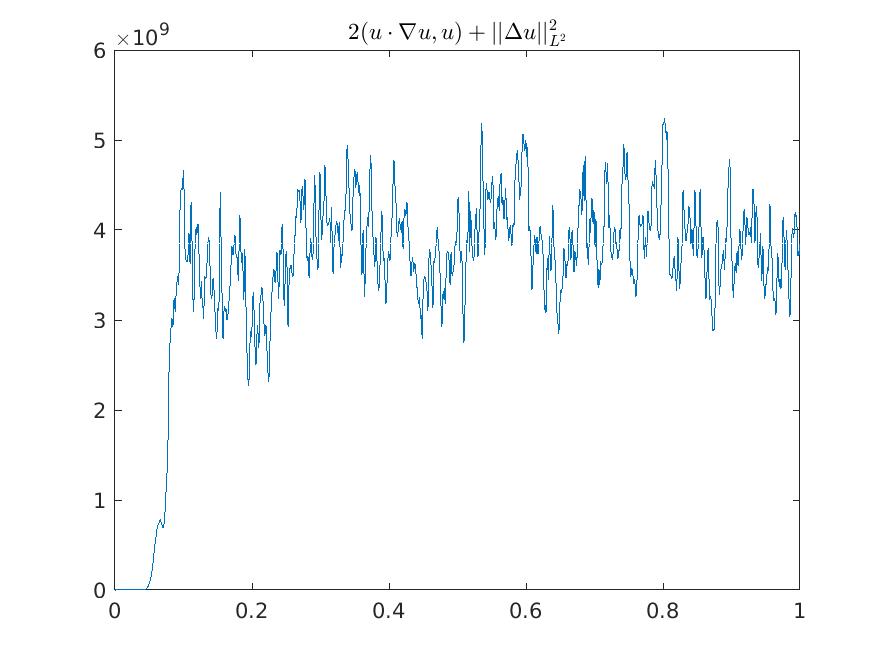}
\caption{\label{LpNormsNL} \scriptsize Quantities of interest in energy estimates: $\tfrac{1}{2} (\phi,\lvert\nabla \phi \rvert^{2})$ (top left), $(\phi,|\nabla\phi|^2)+\|\Delta\phi\|_{L^2}^2$ (top right),  $(u\cdot\nabla u,u)= -\frac12(\Delta\phi,|\nabla\phi|^2)$ (bottom left), and $2(u\cdot\nabla u,u)+\|\Delta u\|_{L^2}^2$ (bottom right). While the quantities in the left column have intermittent negativity, in our simulations, the quantities in the right column were positive for all $t\geq0$. (Only $0\leq t\leq1$ displayed (for visibility), but similar behavior was observed out to $t=10$. Norms were not normalized as in \eqref{normalizeNorms}.)} 
\end{figure}

Similar observations can be made at the level of the $u$ equation \eqref{KSE}.  In particular, regarding the cubic term in \eqref{star}, we notice
\begin{align}\label{ip_obs}
(u\cdot\nabla u,u)=\frac12\int_\Omega u\cdot\nabla|u|^2\,dx =-\frac12\int_\Omega (\nabla\cdot u)|u|^2\,dx = -\frac12\int_\Omega\Delta \phi|\nabla\phi|^2\,dx.
\end{align}
Note that, according to  \eqref{star}, positivity of this term would immediately imply $u\in L^\infty(0,T;L^2)$, and hence global existence for the KSE.
Thus, we also display $\Delta \phi|\nabla\phi|^2/2$ in Figure \ref{prettyGraphics} (bottom right), where we make the perhaps unsurprising observation that integrand in \eqref{ip_obs} is not of a single sign; however, it is surprising that integrand evidently has positive values that are far greater than the magnitude of the negative values.  Moreover, the regions where the integrand is negative seem to be concentrated in small, relatively thin pockets, while the positive regions dominate the domain.  Indeed, in simulations  negative values of $(u\cdot\nabla u,u)$ were observed only for short intervals of time (no larger than roughly $[t,t+0.003]$), but after a short time (roughly $t>0.08$), $(u\cdot\nabla u,u)$ remained positive.  This phenomenon was observed repeatedly in simulations, indicating that one possible route to proving global well-posedness of \eqref{KSE} might be to show that the negative part of $\Delta \phi|\nabla\phi|^2/2$ remain sufficiently small in comparison to its positive part.

\begin{figure}[!ht]
\centering
\includegraphics[width=0.32\textwidth,trim=48mm 30mm 36mm 5mm, clip]
{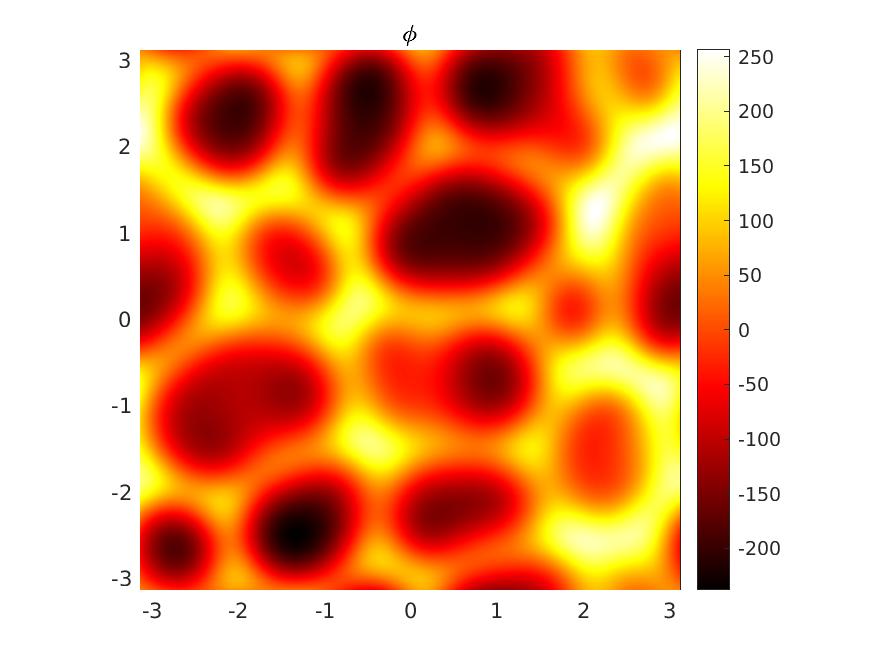}
\includegraphics[width=0.32\textwidth,trim=48mm 30mm 36mm 5mm, clip]
{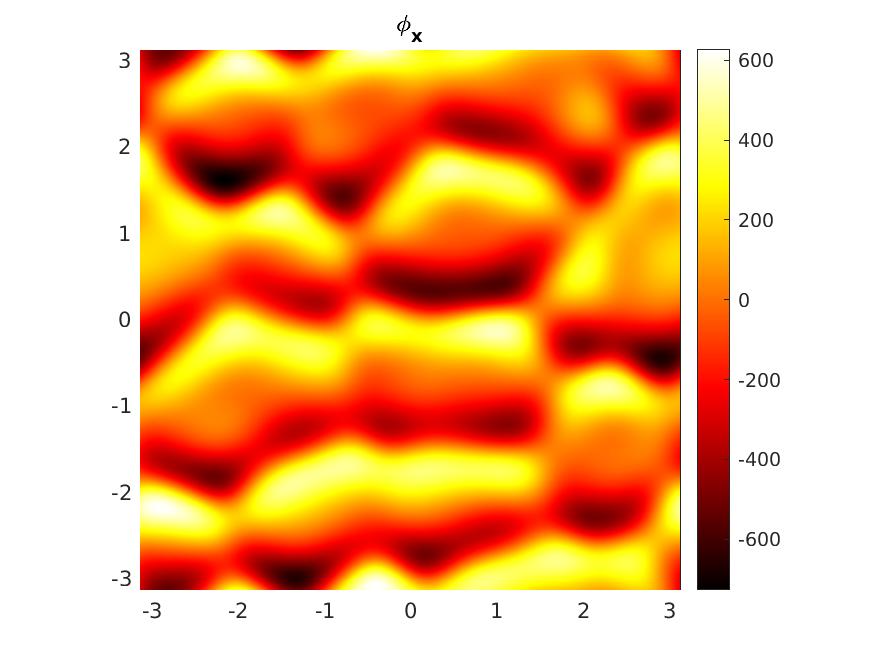}
\includegraphics[width=0.32\textwidth,trim=48mm 31mm 34mm 5mm, clip]
{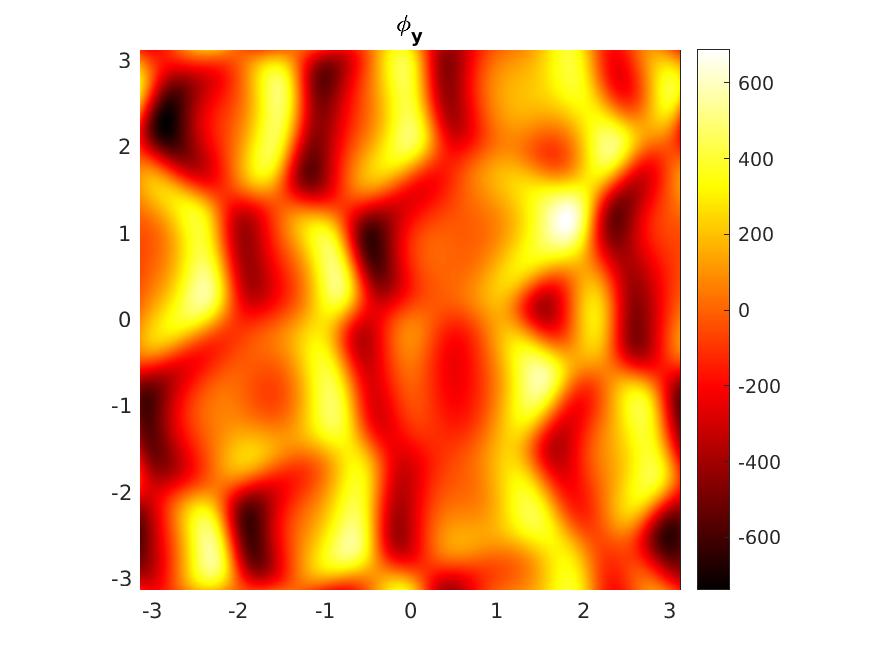}
\includegraphics[width=0.32\textwidth,trim=48mm 30mm 30mm 5mm, clip]
{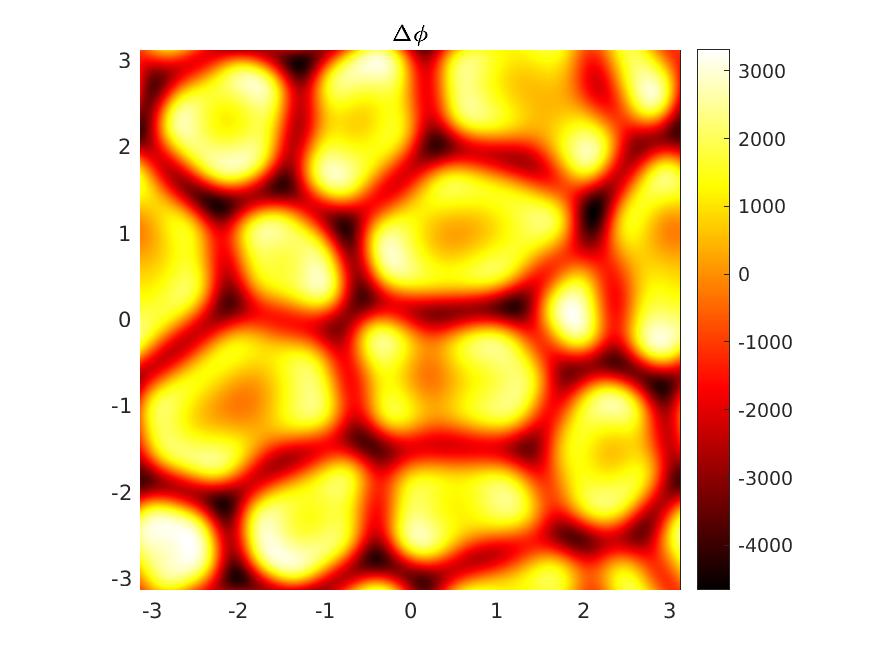}
\includegraphics[width=0.32\textwidth,trim=48mm 30mm 36mm 5mm, clip]
{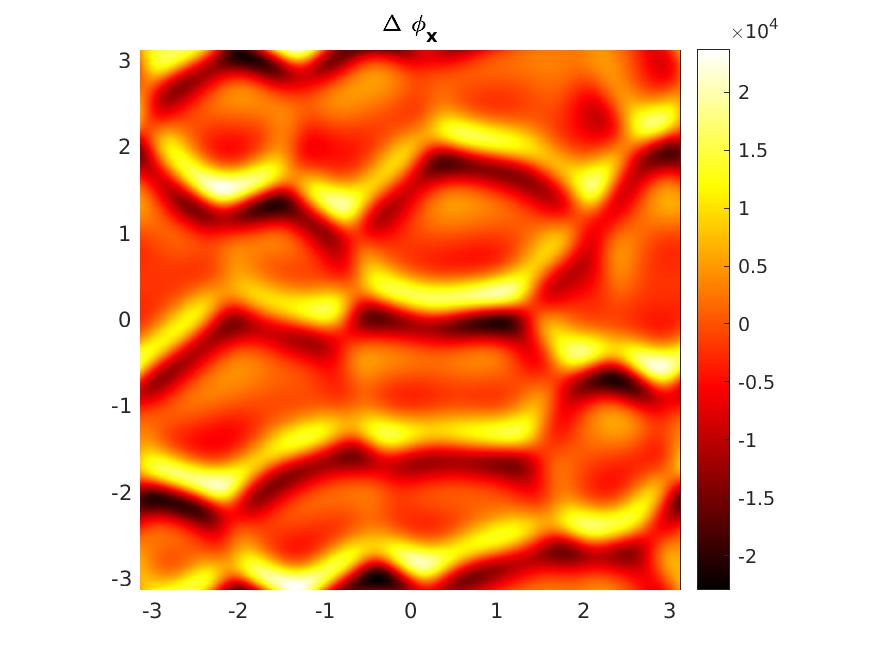}
\includegraphics[width=0.32\textwidth,trim=48mm 30mm 36mm 5mm, clip]
{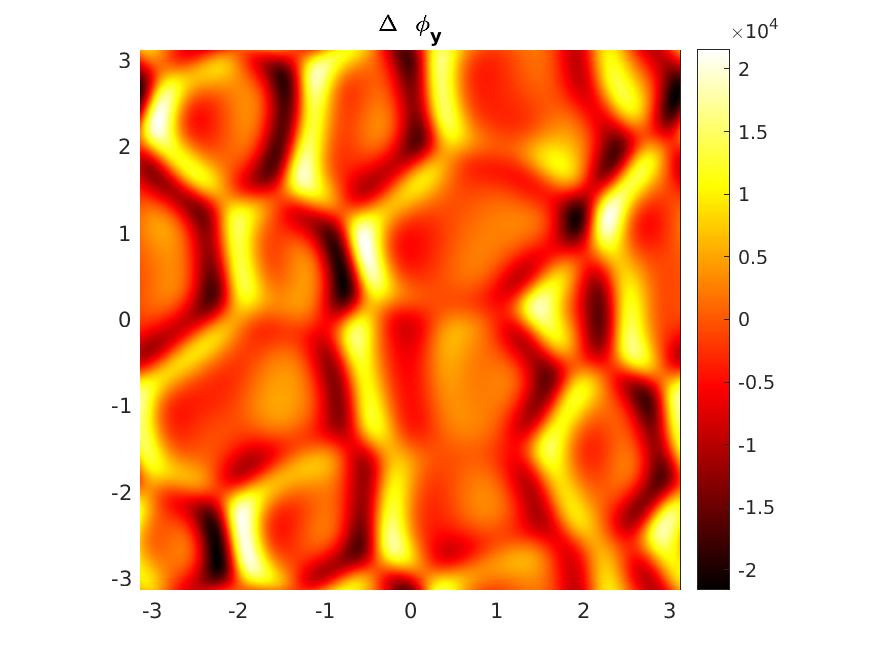}
\includegraphics[width=0.32\textwidth,trim=48mm 30mm 36mm 5mm, clip]
{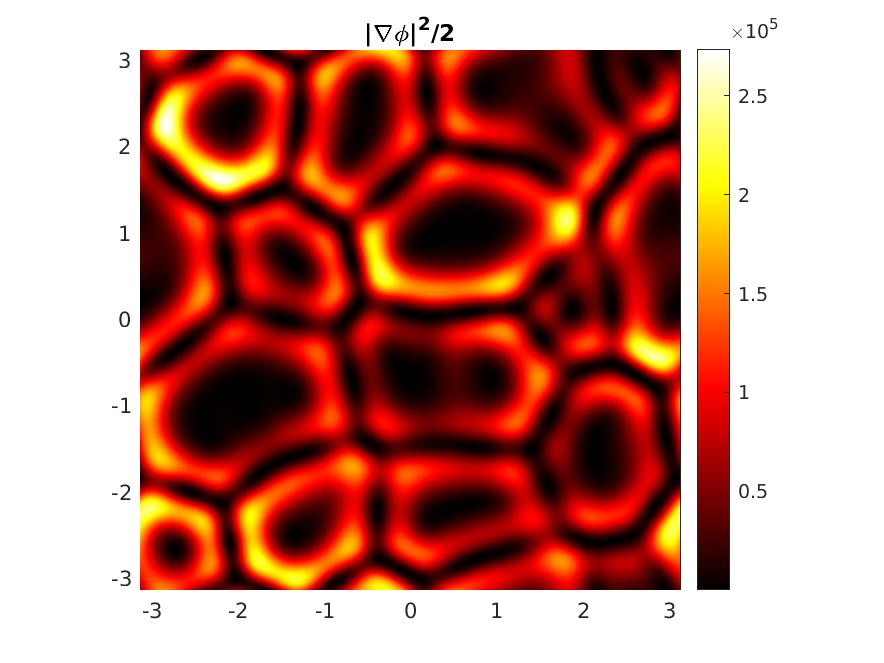}
\includegraphics[width=0.32\textwidth,trim=48mm 30mm 36mm 5mm, clip]
{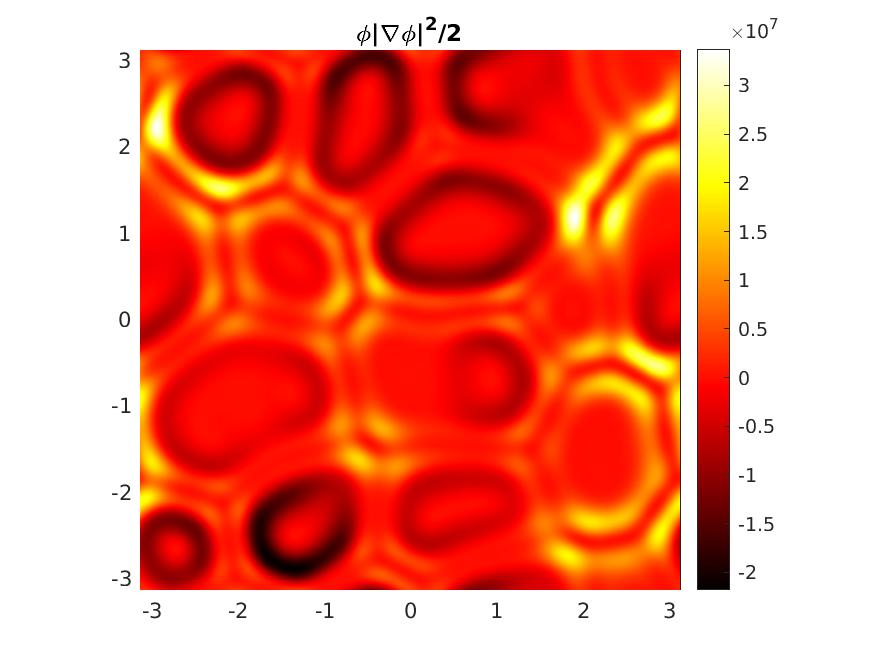}
\includegraphics[width=0.32\textwidth,trim=48mm 30mm 34mm 5mm, clip]
{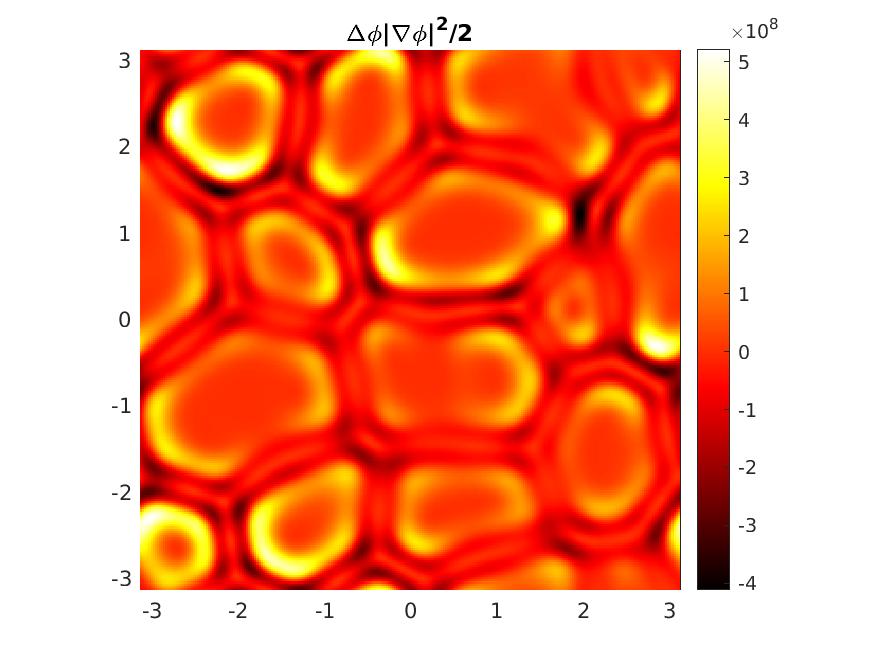}
\caption{\label{prettyGraphics} \scriptsize Solution $\phi$ to \eqref{KSE_scalar} at time $t=10.0$, and various quantities derived from $\phi$.  Here, $\Delta=\partial_{xx}+\partial_{yy}$.  Note that in terms of $u\triangleq\nabla\phi$, one has $\text{div}(u)=\Delta\phi$.}
\end{figure}

\FloatBarrier

\section{Appendix}

Here, we leave details concerning the local well-posedness of the KSE \eqref{KSE} in $H^{1}(\mathbb{T}^{N})$ for $N \in \{2,3\}$, formally stated in Theorem \ref{Theorem 2.2}. The proof follows the argument in \cite{Larios_Yamazaki_2020_rKSE}, which in turn follows \cite{Majda_Bertozzi_2002}; we only sketch the main steps. We consider a Galerkin approximation with $P_{n}$ being the projection onto the Fourier modes of order up to $n \in \mathbb{N} \cup \{0\}$: 
\begin{equation*}
P_{n}u(x) \triangleq  \sum_{\lvert k \rvert \leq n} \hat{u}(k) e^{ix \cdot k}. 
\end{equation*}
%We let $u^{n} \triangleq P_{n}u$  and consider 
We consider the following Galerkin approximation system
\begin{subequations}
\begin{align}
& \partial_{t} u^{n} + P_{n} ((u^{n} \cdot \nabla) u^{n}) = -\lambda \Delta u^{n} - \Delta^{2} u^{n},  \label{3.1a}\\
& u^{n}(\cdot, 0) \triangleq P_{n} u^{in}(\cdot).  \label{3.1b}
\end{align}
\end{subequations} 
% Expanding $u^n$ over finitely many Fourier modes, we see that this is an ODE in the Fourier coefficients with polynomial linearity.  Hence, existence and uniquness of a smooth solutions to the Galerkin system hold due to the Picard-Lindel\"of Theorem. 
Relying on \cite[Theorem 3.1]{Majda_Bertozzi_2002} we can deduce the following statement: 
\begin{proposition}\label{Proposition 3.1}
Given initial data $u^{in} \in H^{1}(\mathbb{T}^{N})$, there exists $T = T(\lVert u^{in} \rVert_{H^{1}}) > 0$ such that \eqref{3.1a}-\eqref{3.1b} has a solution 
\begin{equation}\label{estimate 17}
u^{n} \in L^{\infty} ([0,T]; H^{1}(\mathbb{T}^{N})) \cap L^{2}([0,T]; H^{3}(\mathbb{T}^{N})); 
\end{equation}
additionally, $\partial_{t} u^{n} \in L^{2}([0,T]; H^{-1}(\mathbb{T}^{N}))$. Moreover, these bounds are independent of $n$. Finally, if $T^{\ast}$ is the maximal existence time and $T^{\ast} < \infty$, then $\limsup_{t\to T^{\ast}} \lVert u^{n}(t) \rVert_{H^{1}} = + \infty$. 
\end{proposition}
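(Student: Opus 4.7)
The plan is to use the classical Galerkin approach. First, I would note that \eqref{3.1a}--\eqref{3.1b} is a finite-dimensional ODE for the Fourier coefficients $\{\hat{u}^n(k)\}_{|k|\leq n}$, with a right-hand side that is quadratic (from the nonlinearity $P_n((u^n\cdot\nabla)u^n)$) plus linear (from the diffusion terms). Picard--Lindelöf then gives a unique $C^1$-in-time solution on a maximal interval $[0, T_n^*)$, and because all norms on $P_n L^2(\mathbb{T}^N)$ are equivalent, the standard ODE continuation principle forces $\limsup_{t\to T_n^*}\|u^n(t)\|_{H^1} = +\infty$ whenever $T_n^* < \infty$.

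The crux will be deriving $n$-independent energy estimates that guarantee $T_n^* > T(\|u^{in}\|_{H^1}) > 0$. I would test \eqref{3.1a} against $u^n$ and separately against $-\Delta u^n$ (exploiting the self-adjointness of $P_n$ and the fact that $P_n u^n = u^n$) to obtain
\begin{align*}
\tfrac{1}{2}\tfrac{d}{dt}\|u^n\|_{L^2}^2 + \|\Delta u^n\|_{L^2}^2
&= \lambda \|\nabla u^n\|_{L^2}^2 - \int (u^n\cdot\nabla)u^n\cdot u^n,\\
\tfrac{1}{2}\tfrac{d}{dt}\|\nabla u^n\|_{L^2}^2 + \|\Delta\nabla u^n\|_{L^2}^2
&= \lambda \|\Delta u^n\|_{L^2}^2 + \int (u^n\cdot\nabla)u^n\cdot \Delta u^n.
\end{align*}
The linear $\lambda$-terms can be absorbed into the biharmonic dissipation via the interpolations $\|\nabla u^n\|_{L^2}^2 \leq \|u^n\|_{L^2}\|\Delta u^n\|_{L^2}$ and the $H^2$--$H^1$--$H^3$ interpolation of \eqref{estimate 1}, followed by Young's inequality. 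For the nonlinear terms, I would apply Hölder and the Sobolev embedding $H^2(\mathbb{T}^N)\hookrightarrow L^\infty(\mathbb{T}^N)$ (valid in both $N=2$ and $N=3$), then interpolate $\|u^n\|_{H^2} \lesssim \|u^n\|_{H^1}^{1/2}\|u^n\|_{H^3}^{1/2}$ and Young to absorb the $H^3$-part into $\|\Delta\nabla u^n\|_{L^2}^2$. The outcome should be a differential inequality of the form
\begin{equation*}
\tfrac{d}{dt}\|u^n\|_{H^1}^2 + c\|u^n\|_{H^3}^2 \leq C\bigl(1+\|u^n\|_{H^1}^2\bigr)^q
\end{equation*}
for some $q \geq 2$ and $c, C > 0$ independent of $n$. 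Comparison with the scalar ODE $Y' = C(1+Y)^q$ with $Y(0) = \|u^{in}\|_{H^1}^2$ then yields the uniform $L^\infty_T H^1$ bound on some interval $[0, T]$ with $T = T(\|u^{in}\|_{H^1}) > 0$, and integration in time yields the $L^2_T H^3$ bound.

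The time-derivative bound $\partial_t u^n \in L^2([0,T]; H^{-1})$ then follows by reading off \eqref{3.1a}: $\|\Delta^2 u^n\|_{H^{-1}} \lesssim \|u^n\|_{H^3} \in L^2_T$, $\|\lambda\Delta u^n\|_{H^{-1}} \lesssim \|u^n\|_{H^1} \in L^\infty_T$, and the nonlinear term is controlled by $\|u^n\otimes u^n\|_{L^2} \lesssim \|u^n\|_{H^1}^2$ (using $H^1 \hookrightarrow L^4$ in $N \leq 3$) together with an analogous $H^{-1}$-estimate on $(\nabla\cdot u^n)u^n$, which is again quadratic in $\|u^n\|_{H^1}$.

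I expect the main obstacle to be distinctive to KSE and to lie at the heart of the whole paper: unlike the NSE, the nonlinearity does not vanish when tested against $u^n$, so it acts as an additional destabilizing force alongside the already-destabilizing $\lambda\Delta u^n$ in the $L^2$-equation, rather than being a free cancellation. Both terms must be absorbed into the biharmonic dissipation, which is feasible here only because KSE enjoys fourth-order (rather than second-order) diffusion, leaving just enough room in the Gagliardo--Nirenberg interpolations to close the $H^1$ estimate in both $N = 2$ and $N = 3$.
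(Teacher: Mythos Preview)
Your proposal is correct and follows essentially the same route as the paper's proof: both invoke ODE existence theory for the finite-dimensional Galerkin system, then test \eqref{3.1a} against $u^n$ and $-\Delta u^n$, sum, and close a differential inequality of the form $\tfrac{d}{dt}\lVert u^n\rVert_{H^1} \leq c(1+\lVert u^n\rVert_{H^1})^q$ via interpolation and Young, from which the uniform time of existence, the $L^2_T H^3$ bound, and the $H^{-1}$ time-derivative bound follow. The only cosmetic difference is that the paper cites \cite[Theorem~3.1]{Majda_Bertozzi_2002} for the local ODE step and records the explicit exponent $q=3$, whereas you spell out Picard--Lindel\"of and leave $q$ unspecified.
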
 

\begin{proof} 
By \cite[Theorem 3.1]{Majda_Bertozzi_2002}, it can be immediately shown that given $u^{in} \in H^{1}(\mathbb{T}^{N})$, there exists a unique solution $u^{n}\in C^{1} ([0, T_{n}), H^{1}(\mathbb{T}^{N}) \cap O^{M})$ for some $T_{n} > 0$ and $O^{M} \triangleq \{f \in H^{1}(\mathbb{T}^{N}): \lVert f \rVert_{H^{1}} \leq M \}$. Now we can take $L^{2}(\mathbb{T}^{N})$-inner products on \eqref{3.1a}  with $u$ and then with $-\Delta u^{n}$, sum the resulting equations and prove that there exists a constant $c \geq 0$ such that 
\begin{equation*}
\frac{d}{dt} \lVert u^{n} \rVert_{H^{1}} \leq c(1+ \lVert u^{n} \rVert_{H^{1}})^{3} 
\end{equation*} 
We can fix such a constant $c > 0$ and deduce that $H^{1}(\mathbb{T}^{N})$-norm does not blow up for all $t < T^{\ast} \triangleq \frac{1}{2c(1+ \lVert u^{in} \rVert_{H^{1}})^{2}}$. Hence, $T_{n} > T \triangleq \frac{T^{\ast}}{2}$ for all $n \in \mathbb{N}$ and $u^{n}$ has the regularity of \eqref{estimate 17}. This regularity leads to to $\int_{0}^{T} \lVert \partial_{t} u^{n} \rVert_{H^{-1}}^{2} d\tau \lesssim 1$ by \eqref{KSE} and a standard argument  (see \cite[Cor. 3.2]{Majda_Bertozzi_2002}) completes the proof of Proposition \ref{Proposition 3.1}.  
\end{proof}
From the regularity of the solution to the Galerkin approximation due to Proposition \ref{Proposition 3.1}, we can deduce the following convergence results by standard compactness lemma (e.g., \cite[Theorem 5]{Simon_1987}, \cite[Lem. 4]{Simon_1990}): weak$^{\ast}$ in $L^{\infty} (0, T; H^{1}(\mathbb{T}^{N}))$; weak in $L^{2}(0, T; H^{3}(\mathbb{T}^{N}))$; strong in $L^{2}(0, T; H^{s}(\mathbb{T}^{N}))$ for $s \in [2, 3)$ and  $C([0,T]; H^{s}(\mathbb{T}^{N}))$ for $s \in [0, 1)$. Thereafter, verifying that the limiting solution indeed solves the KSE and proving its uniqueness is standard.  We refer to, e.g.,  \cite{Larios_Yamazaki_2020_rKSE} for details. 

% =====================================================================
 \section*{Acknowledgments}
% =====================================================================
 \noindent
Author A.L. was partially supported by NSF grant %s DMS-1716801 and 
 CMMI-1953346. We wish to thank the anonymous reviewers for valuable comments that have improved our manuscript significantly. 
 
 %~~~~~~~~~~~~~~~~~~~~~~~~~~~~~~~~~~~~~~~~~~~~~~~~~~~~~~~~~~~~~~~~~~~~
\begin{scriptsize}
% \bibliographystyle{abbrv}%amsalpha%amsplain%plain%abbrv
% %\bibliographystyle{natbib}
% \bibliography{LariosBiblio}

\end{scriptsize}
%~~~~~~~~~~~~~~~~~~~~~~~~~~~~~~~~~~~~~~~~~~~~~~~~~~~~~~~~~~~~~~~~~~~~

%~~~~~~~~~~~~~~~~~~~~~~~~~~~~~~~~~~~~~~~~~~~~~~~~~~~~~~~~~~~~~~~~~~~~
\end{document}